\newtheorem{TEO}{Theorem}[section]
\newtheorem{PROP}[TEO]{Proposition}
\newtheorem{LEM}[TEO]{Lemma}
\newtheorem{DEF}[TEO]{Definition}
\newtheorem{COR}[TEO]{Corollary}
\newtheorem{teoa}{Theorem A}
\newtheorem{teob}{Theorem B}
\newtheorem{teorc}{Theorem C}
\newtheorem{EX}[TEO]{Example}
\newtheorem{REM}[TEO]{Remark}
\newcommand\Oh{{\mathcal O}}
\newcommand\sF{{\mathcal F}}
\newcommand\sG{{\mathcal G}}
\newcommand\sI{{\mathcal I}}
\newcommand\sJ{{\mathcal J}}
\newcommand\sM{{\mathcal M}}
\newcommand\sN{{\mathcal N}}
\newcommand\om{\omega}
\newcommand\Ga{\Gamma}
\newcommand\fie{\varphi}
\newcommand\simby[1]{\buildrel{{\scriptstyle\mathrm{#1}}}\over{\sim}}
\newcommand\numeq{\simby{{num}}}         
\newcommand\lineq{\simby{{lin}}}         
\newcommand\dual{\mathrel{\raise3pt\hbox{$\underline{\mathrm{\thinspace d
\thinspace}}$}}}
\newcommand\iso{\cong}
\newcommand\into{\hookrightarrow}
\newcommand\onto{\twoheadrightarrow}
\newcommand\C{\mathbb C}
\newcommand\proj{\mathbb P}
\newcommand\Ka{\mathbb K}
\newcommand\Na{\mathbb N}
\newcommand\red{{\operatorname{red}}}
\newcommand\length{\operatorname{length}}
\newcommand\im{\operatorname{Im}}
\newcommand\rank{\operatorname{rank}}
\newcommand\Ann{\operatorname{Ann}}
\renewcommand\div{\operatorname{div}}
\newcommand\Hom{\operatorname{Hom}}
\newcommand\sHom{\operatorname{{\mathcal H}{\it om}}}
\newcommand\Sing{\operatorname{Sing}}
\newcommand\Supp{\operatorname{supp}}
\newcommand\cliff{\operatorname{Cliff}}
\newenvironment{proof}[1][]{\noindent\textbf{Proof#1}.  }{{\hfill $\blacksquare$}}
\begin{document}

\title{On Clifford's theorem for singular curves
\thanks{This research was partially supported  by Italian MIUR through PRIN 2008 project  ``Geometria delle variet\`a algebriche e dei loro spazi di moduli". }}
\author{Marco Franciosi, Elisa Tenni}
\date{}

\maketitle

\begin{abstract}
Let $C$ be a  2-connected  projective  curve  either reduced with planar singularities  or contained in a smooth  algebraic surface and let $S$  be a  subcanonical cluster (i.e. a 0-dimensional scheme such that the space $H^0(C, \sI_S K_C)$ contains a generically invertible section). Under some general assumptions on $S$ or $C$ we show  that  $h^0(C,\sI_S K_C) \leq p_a(C) - \frac{1}{2} \deg (S)$ and if  equality holds then either $S$ is trivial,  or  $C$ is honestly hyperelliptic  or  3-disconnected.

As a corollary we give a generalization of Clifford's theorem for  reduced curves with planar singularities.

\hfill\break	
{\bf keyword:} algebraic curve,  Clifford's theorem, subcanonical cluster

\hfill\break  {\bf Mathematics Subject Classification (2010)} 14H20,  14C20, 14H51
\end{abstract}

\section{Introduction}

Since the early days of algebraic geometry the rule of residual series turned out to be  fundamental
 in studying the geometry of a projective variety. The first  results of the German school (Riemann, Roch, Brill, Noether, Klein, etc...)   on special divisors
were indeed based on the deep analysis of a  linear series  $|D|$  and its  residual $|K-D|$.

The purpose of this paper is to extend this basic approach to the analysis of special linear series defined on an algebraic curve (possibly singular, nonreduced or reducible), giving applications to the case of semistable curves.

In this paper,
in particular  we generalize the Theorem of Clifford,  which states that $$\dim |D| \leq \frac{\deg D}{2}$$
  for  every  special  effective  divisor  $D$ on a  smooth curve $C$ (see  \cite{Cl}).

One can find in the literature many approaches which generalize  Clifford's theorem and other   classical results to
 certain kinds of  sin\-gular curves, especially nodal ones. Important results  were given by D. Eisenbud and J. Harris (see \cite{EH} and the appendix in \cite{EKS}) and more recently by E. Este\-ves (see \cite{EM02}), applying essentially degeneration techniques, in the case of reduced curves with two components. See also the case of graph curves by D. Bayer and D. Eisenbud in \cite{BE}.
 L. Caporaso  in
 \cite{CAP} gave a generalization of Clifford's theorem for certain line bundles on stable curves, in particular line bundles of degree at most 4 and line bun\-dles whose degree is bounded by $2 p_a(\Gamma_i)$ for every component $\Gamma_i$.

Our approach is more general  since we
 deal with rank one torsion free sheaves on possibly reducible and non reduced curves, without any bound  on the number of components, but with very natural assumptions on the multidegree of the sheaves we consider.

 Our analysis focuses on 2-connected curves,
 keeping  in mind
the classical characterization of  special
divisors on algebraic curves as effective divisors contained in the canonical system.  To this purpose  we introduce
 the notion of {\em  subcanonical cluster}, i.e.  a 0-dimensional subscheme $S  \subset C$  such that  the space $H^0(C, \sI_S K_C)$ contains a generically invertible section
 (see Section \ref{subproperties} for definition and main properties).

We recall that a curve $C$ is {\em m-connnected} if $\deg_B K_C \geq m + (2p_a(B) - 2)$ for every subcurve $B\subset C$, or equivalently $B\cdot (C-B)\geq m$ if $C$ is contained in a smooth surface.

From our point of view it is fundamental
 to work only with subcanonical clusters since our  aim is to consider only clusters truly contained in a canonical divisor. Moreover we need to avoid  clusters contained in a hyperplane canonical section but with uncontrolled behavior. For instance by automatic adjunction (see \cite[Lemma 2.4]{CFHR}) a section vanishing on a component $A$ such  that $C=A+B$ yields a
section in $H^0(B,K_B)$, but considering the embedding $H^0(B, K_B)\into H^0(C, K_C)$, we can build
clusters with unbounded degree on $A$  such that every section in $H^0(C, K_C)$ vanishing  on them vanishes on the entire subcurve $A$.

Our main result is the following theorem.

\begin{teoa}\label{cliffordglobale}
Let $C$ be a projective curve  either   reduced with planar singularities  or contained in a smooth  algebraic surface. Assume $C$ to be
 2-connected
 and let $S \subset C$ be a  subcanonical cluster. Assume one of the following holds:
 \begin{enumerate}
  \renewcommand\labelenumi{(\alph{enumi})}
\item $S$ is a Cartier divisor;
\item there exists $H \in H^0(C, \sI_S K_C)$ such that $\div(H) \cap \Sing(C_{\red})=\emptyset$;
\item $C_{\red}$ is 4-connected.
\end{enumerate}

Then
 $$h^0(C,\sI_S K_C) \leq p_a(C) - \frac{1}{2} \deg (S)$$
 Moreover if equality holds then the pair $(S, C)$ satisfies one of the following assumptions:

 \begin{enumerate}
  \renewcommand\labelenumi{(\roman{enumi})}
\item $S= 0,\, {K_C}$;
\item  $C$ is honestly hyperelliptic and $S$ is a multiple of the honest $g_{2}^{1}$;
\item $C$ is 3-disconnected (i.e.  there is
a decomposition $C=A+B$ with  \mbox{$A\cdot B=2$)}. \\
\end{enumerate}
\end{teoa}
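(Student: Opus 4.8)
The plan is to deduce the inequality from a residuation argument together with a Clifford-type estimate for a multiplication map, and then to read off the equality cases from the forced degeneracy of that map.

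\emph{Step 1: residuation.} Fix a generically invertible section $H\in H^0(C,\sI_SK_C)$. Since $C$ is reduced with planar singularities or lies on a smooth surface, it is Gorenstein and Cohen--Macaulay with no embedded points, so the zero scheme $D:=\div(H)$ is an effective Cartier divisor with $\Oh_C(D)\iso K_C$, with $S\subseteq D$, and with $\deg D=2p_a(C)-2$. Put $\sI_{S'}:=\sHom(\sI_S,\Oh_C)\otimes\Oh_C(-D)$; since $\Oh_C(-D)=\sI_D\subseteq\sI_S$ one gets $\sI_{S'}\subseteq\Oh_C$, and comparing Euler characteristics (using that $C$ is Gorenstein) shows that $\sI_{S'}$ is the ideal sheaf of a $0$-dimensional subscheme $S'\subseteq D$ with $\deg S+\deg S'=\deg D=2p_a(C)-2$; moreover the tautological section $1\in H^0\bigl(C,\sHom(\sI_S,\Oh_C)\bigr)=H^0(C,\sI_{S'}K_C)$ is generically invertible, so $S'$ is again a subcanonical cluster. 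Grothendieck--Serre duality on the Gorenstein curve $C$ gives
$$h^1(C,\sI_SK_C)=h^0\bigl(C,\sHom(\sI_SK_C,K_C)\bigr)=h^0\bigl(C,\sHom(\sI_S,\Oh_C)\bigr)=h^0(C,\sI_{S'}K_C),$$
and Riemann--Roch yields $h^0(C,\sI_SK_C)-h^0(C,\sI_{S'}K_C)=p_a(C)-1-\deg S$. Hence the asserted bound is \emph{equivalent} to
$$h^0(C,\sI_SK_C)+h^0(C,\sI_{S'}K_C)\le p_a(C)+1.\qquad(\star)$$

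\emph{Step 2: the multiplication estimate.} Via $\Oh_C(D)\iso K_C$ one identifies $\sI_SK_C$ with $\sI_S(D)$ and $\sI_{S'}K_C$ with $\sHom(\sI_S,\Oh_C)$, and the evaluation $\sHom(\sI_S,\Oh_C)\otimes\sI_S\to\Oh_C$, twisted by $\Oh_C(D)$, furnishes a multiplication map $\mu\colon H^0(C,\sI_{S'}K_C)\otimes H^0(C,\sI_SK_C)\to H^0(C,K_C)$. Since $h^0(C,K_C)=p_a(C)$, the inequality $(\star)$ follows once one proves
$$\dim\operatorname{im}(\mu)\ \ge\ h^0(C,\sI_{S'}K_C)+h^0(C,\sI_SK_C)-1,$$
which is the classical base-point-free pencil-trick bound. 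On an \emph{integral} curve this is immediate, because multiplication by a nonzero section is injective and the function field is a domain; the whole difficulty is that on a reducible or non-reduced $C$ a nonzero section may vanish on a subcurve, so for suitably chosen $v_0\in H^0(C,\sI_{S'}K_C)$ and $w_0\in H^0(C,\sI_SK_C)$ one must bound the space of rational functions $f$ with $fv_0\in H^0(C,\sI_{S'}K_C)$ and $fw_0\in H^0(C,\sI_SK_C)$, and it is precisely here that $2$-connectedness, via the restriction and automatic-adjunction exact sequences of \cite[\S2]{CFHR}, comes in. I expect this to be the main obstacle, and it is where the three hypotheses are used: under (a) the sheaves $\sI_S$ and $\sI_{S'}$ are line bundles, so the relevant Koszul complex is exact; under (b) a general such $H$ can be chosen with $\div(H)\cap\Sing(C_{\red})=\emptyset$, so $S$ and $S'$ lie away from the gluing locus of $C_{\red}$ and the estimate reduces to the integral case component by component; under (c) the $4$-connectedness of $C_{\red}$ provides enough positivity on every subcurve to run the estimate by induction on the number of components.

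\emph{Step 3: the equality cases.} Assume $(\star)$ is an equality; by the symmetry $S\leftrightarrow S'$ we may assume $\deg S\le p_a(C)-1$. If $h^0(C,\sI_{S'}K_C)=1$, then $(\star)$ together with the Riemann--Roch relation above forces $\deg S=0$, i.e. $S=0$; this, together with the symmetric possibility $S=K_C$, is case (i). Otherwise $h^0(C,\sI_SK_C),h^0(C,\sI_{S'}K_C)\ge2$ and $\dim\operatorname{im}(\mu)=h^0(C,\sI_{S'}K_C)+h^0(C,\sI_SK_C)-1$ exactly, so in the pencil-trick computation the space of rational functions measuring the failure of injectivity of $\mu$ is exactly $1$-dimensional, generated by a single $t$; analysing $t$ then produces one of two configurations. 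Either one obtains an honest pencil of degree $2$ on $C$: checking, once more by $2$-connectedness, that the associated double cover is honest, $C$ is honestly hyperelliptic and, after comparing divisors, $S$ (and $S'$) must be sums of fibres of the honest $g_{2}^{1}$, which is case (ii). Or the degeneracy of $\mu$ is concentrated on a proper subcurve and yields a decomposition $C=A+B$: then $2$-connectedness gives $A\cdot B\ge2$ while equality in the Clifford bound prevents $A\cdot B\ge3$, so $A\cdot B=2$ and $C$ is $3$-disconnected, which is case (iii). Combining the three cases of Step 2 with this trichotomy proves Theorem A.
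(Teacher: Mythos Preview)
Your Step~1 is essentially correct and matches the paper's residuation setup (the paper denotes your $S'$ by $S^{\ast}$). The reformulation $(\star)$ is exactly right.

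The genuine gap is Step~2. You identify the multiplication inequality $\dim\operatorname{im}(\mu)\ge h^0(\sI_{S'}K_C)+h^0(\sI_SK_C)-1$ as ``the main obstacle'' and then don't prove it; the three sketches you give are not arguments. In case~(a), saying ``the relevant Koszul complex is exact'' does not settle the question: even for line bundles on a reducible curve the kernel of the Koszul map can be larger than expected, because a nonzero section may vanish identically on a subcurve. In case~(b), knowing $S,S'\subset C_{\red}^{\mathrm{sm}}$ does \emph{not} reduce the estimate to the integral case ``component by component'': sections of $\sI_SK_C$ can still restrict to zero on some components, and it is exactly the interaction across components that breaks the naive pencil trick. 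In case~(c), ``enough positivity\dots by induction on the number of components'' is a hope, not a proof; the paper's own Examples~5.1 and~5.2 show that the inequality genuinely fails for $C_{\red}$ merely $1$-, $2$-, or $3$-connected, so $4$-connectedness is doing something sharp and specific that your sketch does not capture.

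What the paper actually does is quite different in execution. It introduces an invariant, the \emph{splitting index} $k$ of $S$, measuring the minimal number of pieces into which every section of $H^0(\sI_SK_C)$ decomposes along a partition of $C_{\red}$, and proves a refined bound $h^0(\sI_SK_C)\le p_a(C)-\tfrac12\deg S+\tfrac{k}{2}$ (Theorem~3.5). The proof works by taking a cluster of minimal Clifford index and maximal degree inside the stratum of splitting index $\le k$, and then establishing a dichotomy (Lemma~3.1): either the residual $S^{\ast}$ is contained in $S$, or the two are disjoint Cartier divisors with Clifford index zero. Each branch is then handled by rank estimates for restriction maps to subcurves (Lemmas~3.4--3.6), with Lemma~2.7 supplying the $m$-connectedness input. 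The three hypotheses of Theorem~A enter only at the very end: (b) forces $k=0$ by Proposition~2.12; (c) kills the $\tfrac{k}{2}$ correction via part~(iii) of Theorem~3.5; and (a) is handled by a separate argument (Theorem~3.7) that compares $S$ with its base locus $\overline S$ and bounds the colength by $k$. None of this is visible from your outline. Likewise, your Step~3 (``analysing $t$ then produces one of two configurations'') is too schematic to count as a proof of the equality classification; the paper obtains it from the dichotomy above together with a nontrivial adaptation of Saint-Donat's argument (Lemma~3.3).
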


 Let $\cliff(\sI_S K_C):= 2p_a(C) -  \deg(S) - 2\cdot  h^0(\sI_S K_C)$ be the Clifford index of the
sheaf  $\sI_S K_C$.
Notice that if $S$ is a Cartier divisor then  $\cliff(\sI_S K_C)$ is precisely the classical Clifford index for invertible sheaves.
Theorem A is equivalent to the statement that the Clifford index is non negative.

If $C$ is a smooth curve the theorem is equivalent to the classical {\em Clifford's theorem}, while if $C$ is 1-connected but 2-disconnected then $|K_C|$ has base points and
 therefore the cluster consisting of such base points does not satisfy the theorem. Moreover without our assumptions the theorem is false even for subcanonical clusters contained in  curves with very ample canonical sheaf. See for instance Example \ref{tetraedro}. However we obtain a more general inequality by adding a correction term bounded by half of the number of irreducible components of $C$. See Theorem \ref{numero_irriducibili} for the full result.\\

The proof is based on the analysis of a cluster $S$ of minimal Clifford index and maximal degree and of its residual $S^{\ast}$ (see Subsection 2.3 for  definitions and main properties). When considering the restriction to $C_{\red}$ it may happen that every section in $H^0(C, \sI_S K_C)$ decomposes as a sum of sections with small support. This behaviour is completely new with respect to the smooth case and can even lead to the existence of clusters with negative Clifford index. This is the reason why in Section 2.3 we introduce the notion of splitting index of a cluster and we run our analysis by a stratification of the set of subcanonical clusters by their splitting index.

For clusters in each strata with minimal Clifford index the following dichotomy holds: either $S^{\ast} \subset S$ or $S$ and $S^{\ast}$ are Cartier and disjoint. In the first case we estimate the rank of the restriction of $H^0(C, \sI_S K_C)$ to the curve supporting $S$, while in the second case we give a generalization of the classical techniques  developed by Saint Donat in  \cite{sd}.\\

 As a corollary of Theorem A  we are able to  more deeply analyze the case of reduced curves since the intersection products are always nonnegative.
The following results apply in particular to the case of 4-connected semistable curves.

 \begin{teob} Let $C$ be a projective $4$-connected reduced  curve with planar singularities. Let $L$ be an invertible  sheaf and $S$ a cluster on $C$. Assume that

$$0 \leq \deg[(\sI_S L)_{|B}] \leq \deg {K_C}_{|B}$$
for every subcurve $B \subset C$. Then
$$h^0(C,\sI_S L )\leq \frac{\deg{\sI_S L}}{2}+1.$$
Moreover if equality holds then $\sI_S L \cong \sI_T \omega_C$ where $T$ is a subcanonical cluster. The pair $(T, C)$ satisfies one of the following assumptions::
 \begin{enumerate}
  \renewcommand\labelenumi{(\roman{enumi})}
\item $T=0,\, K_C$
\item  $C$ is honestly hyperelliptic and $T$ is a multiple of the honest $g_{2}^{1}$.
\end{enumerate}
\end{teob}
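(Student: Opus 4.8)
The plan is to deduce Theorem B from Theorem A by showing that, once $\sI_S L$ has enough sections, it is forced to be isomorphic to $\sI_T\omega_C$ for a subcanonical cluster $T$, and then to apply Theorem A in its form (c), which is available because $C=C_\red$ is $4$-connected. First I would dispose of the low range. Taking $B=C$ in the hypothesis gives $0\le\deg(\sI_S L)\le\deg K_C=2p_a(C)-2$. Hence if $h^0(C,\sI_S L)\le 1$ the asserted inequality is trivial, and if $h^1(C,\sI_S L)=0$ then Riemann--Roch on the Gorenstein curve $C$ (a reduced curve with planar singularities is Gorenstein, so $\omega_C$ is invertible and $K_C$ is Cartier) gives $h^0(C,\sI_S L)=\deg(\sI_S L)+1-p_a(C)<\tfrac12\deg(\sI_S L)+1$ since $\deg(\sI_S L)<2p_a(C)$; in particular equality in Theorem B forces $h^1(C,\sI_S L)\ge 1$, and the only equality case left uncovered below will be $h^0=1$, $\deg(\sI_S L)=0$, where $\sI_S L\cong\sO_C\cong\sI_D\omega_C$ for an effective canonical divisor $D$ (which exists since $C$ is $2$-connected, see \cite{CFHR}), so that the structural conclusion also holds with $T=D$.

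Assume then $h^0(C,\sI_S L)\ge 2$ and $h^1(C,\sI_S L)\ge 1$. By Serre duality $h^0(C,\sHom(\sI_S L,\omega_C))=h^1(C,\sI_S L)\ge 1$, so there is a nonzero homomorphism $\varphi\colon\sI_S L\to\omega_C$. The crucial step — and the one I expect to be the main obstacle — is to arrange that $\varphi$ be injective and that $\sI_S L$ admit a generically invertible section. A nonzero map of rank-one torsion-free sheaves on the connected reduced curve $C$ is injective unless it vanishes identically on a proper subcurve $B\subsetneq C$; in that case $\varphi$ factors through the torsion-free quotient of the restriction $(\sI_S L)_{|B'}$, where $B':=\overline{C\setminus B}$, and adjunction (which involves $\omega_{B'}$) yields a nonzero map on $B'$. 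Here the two-sided hypothesis $0\le\deg(\sI_S L)_{|D}\le\deg {K_C}_{|D}$ for every subcurve $D$ should pin down the behaviour on $B$, forcing $\deg(\sI_S L)_{|B}=0$, and an induction on the number of irreducible components of $C$ should reduce to $B=\emptyset$; the same reduction handles the situation in which every section of $\sI_S L$ vanishes on a common component.

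Granting injectivity of $\varphi$, it identifies $\sI_S L$ with a rank-one torsion-free subsheaf of the line bundle $\omega_C$ having finite-length cokernel, hence with $\sI_T\omega_C$ for a $0$-dimensional subscheme $T$; the image under $\varphi$ of a generically invertible section of $\sI_S L$ is a generically invertible section of $\sI_T K_C$, so $T$ is a subcanonical cluster. Applying Theorem A to the pair $(T,C)$ under hypothesis (c) (valid since $C$ is reduced and $4$-connected) gives $h^0(C,\sI_T K_C)\le p_a(C)-\tfrac12\deg T$; since $\deg T=\deg K_C-\deg(\sI_T\omega_C)=2p_a(C)-2-\deg(\sI_S L)$ and $h^0(C,\sI_S L)=h^0(C,\sI_T K_C)$, this rewrites exactly as $h^0(C,\sI_S L)\le\tfrac12\deg(\sI_S L)+1$, which is the inequality of Theorem B.

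Finally, suppose equality holds in Theorem B. By the first paragraph we are in the situation $\sI_S L\cong\sI_T\omega_C$ with $T$ a subcanonical cluster (either produced by the argument above, or $T=D$ in the degenerate case $h^0=1$), and equality in Theorem B becomes equality in Theorem A for $(T,C)$. Hence $(T,C)$ is one of the three exceptional pairs (i)--(iii) of Theorem A. Case (iii) is excluded, because a $4$-connected curve satisfies $A\cdot B\ge 4$ for every decomposition $C=A+B$ and so cannot be $3$-disconnected. Therefore either $T=0,K_C$, or $C$ is honestly hyperelliptic and $T$ is a multiple of the honest $g_{2}^{1}$, which is precisely the stated dichotomy.
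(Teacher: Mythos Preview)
Your overall strategy---reduce to Theorem~A by exhibiting $\sI_S L\cong\sI_T\omega_C$ for a subcanonical cluster $T$---is exactly the paper's. The gap is in the step you yourself flag as ``the main obstacle'': you try to show directly that \emph{every} $\sI_S L$ satisfying the hypotheses with $h^0\ge 2$ and $h^1\ge 1$ admits both a generically invertible section and an injective map to $\omega_C$. This is false in general, and your sketch does not establish it.

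Concretely: if a nonzero $\varphi\colon\sI_S L\to\omega_C$ vanishes on a subcurve $B$, automatic adjunction factors it through $\omega_{B'}$ with $B'=C-B$, giving only $\deg(\sI_S L)_{|B'}\le\deg\omega_{B'}=\deg K_{C|B'}-B\cdot B'$. This says nothing about $\deg(\sI_S L)_{|B}$, so the claim that the two-sided degree bound ``forces $\deg(\sI_S L)_{|B}=0$'' is unsupported; and the proposed induction on components founders because $B'$ need not be $4$-connected. In fact the paper records (in the Remark immediately following the proof of Theorem~B) that a non-subcanonical $\sI_S L$ \emph{can} satisfy all the hypotheses; for such a sheaf one gets the strict bound $h^0\le\tfrac12\deg(\sI_S L)$, but it is not of the form $\sI_T\omega_C$. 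An explicit example: on $C=\Gamma_1+\Gamma_2$ with $\Gamma_1\cdot\Gamma_2=4$, take $L$ with $L_{|\Gamma_1}$ of degree~$1$ and $h^0(\Gamma_1,L_{|\Gamma_1})=0$; then every section of $L$ vanishes on $\Gamma_1$, yet $h^0$ and $h^1$ can both be made $\ge 2$ by choosing $L_{|\Gamma_2}$ suitably.

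The paper's fix is to prove subcanonicity only for a sheaf of \emph{minimal} Clifford index among all rank-one torsion-free sheaves satisfying the two-sided degree bounds. If such a minimal sheaf lacked a generically invertible section, one lets $B$ be the maximal subcurve annihilated by all sections, sets $A=C-B$, and builds the auxiliary sheaf $\sF=\sI_{S_A}L_{|A}(-B)\oplus\Oh_B(A)(-A)$, which again satisfies the degree bounds (the two summands glue because both vanish along $A\cap B$). A direct computation gives $\cliff(\sF)\le\cliff(\sI_S L)-2$, contradicting minimality; hence $\Oh_C\hookrightarrow\sI_S L$. Applying the same argument to the Serre-dual $\sHom(\sI_S L,\omega_C)$, which has the same Clifford index and satisfies the same bounds, yields $\Oh_C\hookrightarrow\sHom(\sI_S L,\omega_C)$ and thus the desired injective $\varphi$. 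Once the minimal-index sheaf is shown to be subcanonical, Theorem~A bounds its Clifford index from below by $0$, and minimality then gives the inequality for every admissible sheaf; your concluding paragraph on the equality case is then correct.
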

In the case of smooth curves an effective divisor $D$ either satisfies the assumptions of Clifford's theorem, or it is non special and $h^0(C, D)$ is computed easily by means of Riemann-Roch Theorem. If the curve $C$ has many components we may have a mixed behavior, which we deal with in the following theorem.
 \begin{teorc} Let $C$ be a projective $4$-connected reduced  curve with planar singularities. Let $L$ be an invertible  sheaf and $S$ a cluster on $C$ such that
$$0 \leq \deg[(\sI_S L)_{|B}]
\mbox{ \ for every subcurve   } B \subset C .$$
Assume there exists a subcurve $\Gamma\subset C$ such that $ \deg ({K_C}_{|\Gamma}) < \deg ( \sI_S L_{|\Gamma})$.
and
let $C_0$ be the maximal subcurve such that $$\deg[(\sI_S L)_{|B}] > \deg {K_C}_{|B}
\mbox{ \ for every subcurve   } B \subset  C_0 .$$
Then
$$h^0(C,\sI_S L )\leq \frac{\deg{\sI_S L}}{2} + \frac{\deg(\sI_S L - K_C)_{|C_0}}{2}.$$

 \end{teorc}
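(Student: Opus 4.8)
Write $\sF:=\sI_S L$; equivalently, by Riemann--Roch and Serre duality, what has to be shown is $h^1(C,\sF)\le\frac12\deg[(K_C-\sF)_{|C_1}]$, where $C_1:=\overline{C-C_0}$. I would nonetheless argue by reducing directly to Theorem B, replacing $\sF$ by a sheaf that has canonical degree on $C_0$ and coincides with $\sF$ on $C_1$.

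First I would dispose of the extremal cases. Since the degree is additive over components, an inclusion-minimal subcurve with $\deg[(\sF-K_C)_{|\Gamma}]>0$ is forced to be irreducible, hence satisfies the defining property of $C_0$; such a $\Gamma$ exists by hypothesis, so $C_0\neq 0$. If $C_0=C$, then $\sF$ has super-canonical degree on every subcurve, so $h^1(C,\sF)=0$ and the inequality holds with equality by Riemann--Roch. So assume $0\neq C_0\subsetneq C$; then $C_1\neq 0$ and $C_0\cdot C_1\geq 4$ by $4$-connectedness. Next I would prove that $\deg[(\sF)_{|B}]\le\deg[(K_C)_{|B}]$ for every subcurve $B\subseteq C_1$: an inclusion-minimal $B\subseteq C_1$ violating this would again be irreducible, and then $C_0+B$ would be a strictly larger subcurve on which $\sF$ has super-canonical degree on all subcurves, contradicting the maximality of $C_0$.

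Now choose a general effective Cartier divisor $D\subset C_0$ supported in the smooth locus of $C$ with $\deg D_{|\Gamma}=\deg[(\sF-K_C)_{|\Gamma}]$ for every irreducible component $\Gamma$ of $C_0$; then $\deg D=\deg[(\sF-K_C)_{|C_0}]$, and $\sF':=\sF\otimes\Oh_C(-D)=\sI_S(L(-D))$ is again a cluster twisted by a line bundle. One computes $\deg[(\sF')_{|B}]=\deg[(\sF)_{|B\cap C_1}]+\deg[(K_C)_{|B\cap C_0}]$ for every $B\subseteq C$, and together with the hypothesis $\deg[(\sF)_{|B'}]\geq 0$, with $\deg[(K_C)_{|B'}]\geq 0$ (valid since $C$ is $4$-connected), and with the combinatorial fact above, this gives $0\leq\deg[(\sF')_{|B}]\leq\deg[(K_C)_{|B}]$ for every $B\subseteq C$. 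Hence Theorem B applies to $\sF'$ and yields $h^0(C,\sF')\leq\frac12\deg\sF'+1$. On the other hand $\sF'\subset\sF$ with cokernel $\Oh_D$, a torsion sheaf of length $\deg D$ (because $\sF$ is locally free along $D$), so $h^0(C,\sF)\leq h^0(C,\sF')+\deg D$, and therefore
$$h^0(C,\sF)\leq\tfrac12\deg\sF'+\deg D+1=\tfrac12\deg\sF+\tfrac12\deg[(\sF-K_C)_{|C_0}]+1.$$

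The remaining, and main, difficulty is the spurious $+1$. I would absorb it by ruling out the equality case of Theorem B for $\sF'$. If $h^0(C,\sF')=\frac12\deg\sF'+1$, then $\sF'\cong\sI_T\omega_C$ with $T$ a subcanonical cluster that is either trivial or (when $C$ is honestly hyperelliptic) a multiple of the honest $g_{2}^{1}$; comparing with $\deg[(\sF')_{|B}]=\deg[(K_C)_{|B}]$ for $B\subseteq C_0$ forces $\deg T_{|C_0}=0$. But $\sF'\cong\Oh_C$ contradicts $\deg[(K_C)_{|C_0}]>0$; the honestly hyperelliptic alternative is impossible, since a reducible $4$-connected curve carries its honest $g_{2}^{1}$ with positive degree on every irreducible component (so $\deg T_{|C_0}>0$ unless $T=0$); and $\sF'\cong\omega_C$ is excluded for general $D$, while in the only configuration in which it is forced one gets $\sHom(\sF,\omega_C)\cong\Oh_C(-D)$, hence $h^1(C,\sF)=0<1=h^1(C,\sF')$, so the connecting homomorphism of $0\to\sF'\to\sF\to\Oh_D\to 0$ is non-zero and already $h^0(C,\sF)\leq h^0(C,\sF')+\deg D-1$ suffices. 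In all other cases $h^0(C,\sF')\leq\frac12\deg\sF'$, and the asserted bound follows. I expect the side computations (length of the cokernel, the positivity $\deg[(K_C)_{|B'}]\geq 0$, and keeping track of the parity of $\deg\sF'$ when passing from $\frac12\deg\sF'+1$ to $\frac12\deg\sF'$) to be routine; the genuine obstacle is the equality analysis just outlined.
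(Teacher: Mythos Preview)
Your overall strategy coincides with the paper's: twist $\sF=\sI_S L$ down on $C_0$ so that the result $\sF'$ satisfies the hypotheses of Theorem~B, then recover the bound for $\sF$ by adding back $\deg D$ and absorbing the spurious $+1$. The paper uses a cluster $T$ on $C_0$ with $(\sI_T\sF)_{|C_0}\numeq K_{C|C_0}$; your Cartier divisor $D$ supported on smooth points is essentially the same device.

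The gap is in how you absorb the $+1$. You split according to whether \emph{equality} holds in Theorem~B, and claim that when it fails one has $h^0(C,\sF')\le\frac12\deg\sF'$. But ``equality fails'' only gives $h^0(C,\sF')<\frac12\deg\sF'+1$, which for odd $\deg\sF'$ yields merely $h^0\le\frac{\deg\sF'+1}{2}$; this is not enough, and the parity issue you flag as routine is not. Concretely, nothing prevents $\sF'$ from having Clifford index~$1$ (e.g.\ $\sF'\cong\sI_Z\omega_C$ with $Z$ subcanonical of odd degree and $\cliff=1$), and then your chain of inequalities overshoots by~$\frac12$.

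The paper's dichotomy is different and is exactly what closes this gap. It is \emph{subcanonical versus not}:
\begin{itemize}
\item If $\sF'\cong\sI_Z\omega_C$ with $Z$ subcanonical (any Clifford index, not just~$0$), then one shows $h^1(C,\sF')>h^1(C,\sF)$. The point is that $\Hom_{C_0}(\sF,\omega_C)=0$ since $\sF$ has super-canonical degree on every subcurve of $C_0$, whereas the restriction $\Hom_C(\sF',\omega_C)\cong H^0(C,\sI_{Z^\ast}K_C)\to H^0(C_0,\sI_{Z^\ast}K_C)$ is nonzero precisely because $Z^\ast$ is subcanonical, i.e.\ admits a section not vanishing on any component. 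Hence $h^0(C,\sF)\le h^0(C,\sF')+\deg D-1$, and Theorem~B suffices without any equality analysis.
\item If $\sF'$ is \emph{not} subcanonical, the proof of Theorem~B (recorded as Remark~\ref{h0-nonsubcanonico}) gives directly $h^0(C,\sF')\le\frac12\deg\sF'$, regardless of parity.
\end{itemize}
Your $h^1$ argument in the special case $\sF'\cong\omega_C$ is the $Z=0$ instance of the first bullet; once you notice that the same restriction-map argument works for \emph{every} subcanonical $\sF'$, the ``general $D$'' maneuver and the case-by-case analysis of the equality types become unnecessary, and the parity obstruction disappears.
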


 \hfill\break
 We believe that the above results may be  useful for  the study of vector bundles on the compactification of the Moduli Space of   genus $g$ curves
 and in particular to  the analysis  of limit series.
Moreover  they may be considered as a first step in order to develop a  Brill-Noether  type analysis for  semistable curves.
Further  applications will be given  in a forthcoming article (see \cite{FrTe2}) in which  we  analyze the  normal generation of
invertible sheaves on numerically connected curve.  In particular we are going to give a  generalization of Noether's Theorem.

Finally,
 as shown in \cite{CFHR},   the study of invertible sheaves on curves  lying on a smooth algebraic surface
is rich in implications
when Bertini's  theorem  does not hold or  simply
if one needs to consider every curve contained  in a given linear system.

\hfill\break
The paper is organized as follows. In Section 2 we set the notation and prove some preliminary results. In Section 3 we prove Theorem A, in Section 4 we study the case of reduced curves and prove Theorem B and C. Finally in Section 5 we illustrate some examples in which we illustrate that the Clifford index may be negative if our assumptions are not satisfied.

\hfill\break
{\bf Acknowledgments.}
The authors would like to thank  Pietro Pirola for his stimulating suggestions. We wish to thank the Referee for the careful reading and for the very useful
comments and corrections. The second  author  wishes to thank  the Department of Mathematics
 of the  University of Pisa  for providing an excellent research environment.

\section{Notation and Preliminary results}

\subsection{Notation and conventions}

 We work over  an algebraically closed field $\Ka$ of characteristic $\geq 0$.

Throughout this paper a curve $C$ will always be a Cohen-Macaulay scheme of pure dimension 1. Moreover, if not otherwise stated, a curve $C$ will be \textit{projective,  either reduced  with planar singularities}
 (i.e. such that for every point $P\in C$ it is $ \dim_{\Ka} \sM/\sM^2 \leq 2$ where $\sM$ is the maximal ideal of $\Oh_{C,P}$)
\textit{or contained in a smooth algebraic surface}  $X$, in which case we allow $C$ to be reducible and non reduced.

In both cases we will use the standard notation for curves lying on smooth algebraic surface, writing $C=\sum_{i=1}^{s} n_{i} \Gamma_{i}$, where
 $\Gamma_{i}$ are    the
 irreducible components of $C$  and $n_{i}$ are their multiplicities.

\textit{A subcurve $B\subseteq C$ is a Cohen-Macaulay subscheme of pure dimension 1};  it
will be written as
 $\sum m_{i} \Gamma_{i}$, with $0\leq m_i\leq n_i$ for every $i$.

\hfill\break
Given a sheaf $\mathcal{F}$ on $C$, we write $H^0(B, \mathcal{F})$ for $H^0(B, \mathcal{F}_{|B})$  and $H^0(C, \mathcal{F})_{|B}$ for the image of the restriction map $H^0(C, \mathcal{F}) \to H^0(B, \mathcal{F}_{|B})$.

 $\om_C$ denotes the   dualizing sheaf of $C$ (see \cite{Ha}, Chap.~III, \S7), and
$p_a(C)$ the arithmetic genus of $C$, $p_a(C)=1-\chi(\Oh_C)$.  $K_C$ denotes the canonical divisor.

\hfill\break
By abuse of notation if $B \subset C $ is a subcurve  of $ C$,   $ C-B$  denotes the  curve $A$ such that
 $C= A+B$.

 Notice that under our assumptions every subcurve $B \subseteq C$ is \textit{ Gorenstein},  which is equivalent to say that  $\omega_B$ is an invertible sheaf.

 Throughout the paper we will use the  following exact sequences:
 \begin{equation}0 \to \omega_A  \to \omega_C \to {\omega_C} _{|B} \to 0 \end{equation}
  \begin{equation}0 \to \Oh_A(-B) \to \Oh_C \to \Oh_B \to 0,  \end{equation}
 where  $\Oh_A(-B) \iso \Oh_A \otimes \Oh_X(-B)$ if $C$ is contained in a smooth surface  $X$ and corresponds to $\sI_{A\cap B} \cdot \Oh_A$ if $C$ is reduced. See \cite[Chapter 3]{miles} and \cite[Proposition II.6.4]{BPV}.

\begin{DEF}  If $A,\, B$ are subcurves of $C$ such that $A+B=C$, then
$$A \cdot B =
 \deg_B(K_C)- (2p_a(B)-2) =  \deg_A(K_C)- (2p_a(A)-2). $$

 If $C$ is contained in a smooth algebraic surface  $X$ this corresponds to the intersection product of curves as divisors on $X$.
  \end{DEF}
We have the key formula (cf. \cite[Exercise V.1.3]{Ha})
 \begin{equation}\label{genere A+B}  p_a(C)= p_a(A)+p_a(B) + A\cdot B -1. \end{equation}
   Following the original definition of   Franchetta
a  curve $C$ is ({\em numerically}) {\em
$m$-con\-nected} if
$C_1 \cdot C_2 \geq m $
for every decomposition  $ C=C_1 + C_2$ in effective, both nonzero curves.
To avoid ambiguity between the various notions of connectedness for a curve, we will say that a curve is {\em numerically
connected} if it is 1-connected,  and {\em topologically
connected} if it is connected as a topological space (with the Zariski topology).

\hfill\break
Let $\sF$ be a rank one torsion free sheaf on $C$.  We write
$\deg \sF_{|C}$ for the degree of $\sF$ on $C$,
 $ \deg \sF_{|C}=\chi(\sF)-\chi(\Oh_C) $. By Serre duality we mean
Grothendieck-Serre-Riemann-Roch  duality theorem:
  \begin{equation}
H^1(C,\sF)\dual\Hom(\sF,\om_C)
 \nonumber
 \end{equation}
(where $\dual$ denotes isomorphic to the dual space).

If $C=\sum n_i\Gamma_i$ then
 for each $i$ the natural inclusion map $\epsilon_i:\Ga_i \rightarrow C$
induces a map
$\epsilon_i^{\ast}:\sF \rightarrow \sF_{|\Ga_i}$. We denote by
 $d_i= \deg (\sF_{|\Ga_i}) = \deg_{\Gamma_i} \sF$ the degree of $\sF$ on each  irreducible component,
 and by  ${\bf d}:=(d_1, ..., d_s)$  the
 {\em multidegree  of $\sF$ on $C$}.  If  $B $ is a subcurve of $C$, by $\mathbf
d_{B}$  we mean the multidegree of $\sF_{|B}$.
We remark that there exists a natural partial ordering given by the multidegree.

 $\sF$ is  NEF if $d_i \geq 0$ for every $i$.

We say that two rank one torsion free sheaves $\sF$ and $\sG$ are numerically equivalent if their degrees coincide on every subcurve and we will use the notation $\sF \numeq \sG$.

If $S$ and $S_1$ are linearly equivalent Cartier divisor, we will use the notation $S \lineq S_1$.

 \hfill\break
A  curve
 $C$ is {\em honestly
hyperelliptic}  if there exists a finite
morphism \mbox{$\psi\colon C\to\proj^1$} of degree $2$. In this case
 $C$ is either irreducible, or of the form $C=\Gamma_{1}+
\Gamma_2$ with $p_{a}(\Gamma_{i} ) = 0$ and
$\Gamma_1 \cdot \Gamma_2=p_{a}(C) +1$
(see    \cite[\S3]{CFHR} for a detailed treatment).
For a given point $P\in \proj^1$ $\psi^{\ast} (P)$ is a cluster of degree 2,
which we will denote by a {\em honest $g_{2}^{1}$}.\\

\begin{DEF} A {\em cluster} $Z$ of {\em degree}
$\deg Z=r$ is a \hbox{$0$-dimensional} subscheme with
$\length\Oh_Z=\dim_k\Oh_Z=r$.  The multidegree of $Z$ is defined as the opposite of the multidegree of $\sI_Z$. We consider the empty set as the degree 0 cluster.
\end{DEF}

\begin{DEF}
 The Clifford index of a rank one torsion free sheaf on $C$ is
$$ \cliff(\sF) :=  \deg(\sF)- 2h^0(C, \sF)+2 $$
If $S$ is a cluster and $\sF \iso \sI_S K_C$ then the Clifford index of $S$ may be defined as the Clifford index of  $ \sI_S K_C$ and  reads as follows:
 $$\cliff(\sI_S K_C):= 2p_a(C) -  \deg(S) - 2\cdot  h^0(\sI_S K_C)$$
\end{DEF}
If $\sF$ is an invertible sheaf (in particular if $S$ is a  Cartier divisor) then $ \cliff(\sF)$, resp. $\cliff(\sI_S K_C)$ is precisely the classical Clifford index of the line bundle $\sF$, resp. $\sI_S K_C$.\\

\subsection{ Preliminary results on projective curves}

In this section  we  recall some useful
results on invertible sheaves on projective curves.\\

In the following theorem we summarize the main applications of the
results proved in
 \cite{CFHR} on Cohen--Macaulay 1-dimensional
 projective schemes.  For a general treatment see \S 2, \S 3 of  \cite{CFHR}.

 \begin{TEO}\label{thm:curve} Let $C$ be a
Gorenstein curve, $K_{C}$ the  canonical divisor of $C$. Then
  \begin{enumerate}
	 \renewcommand\labelenumi{(\roman{enumi})}
	
 \item If $C$ is 1-connected then $H^{1}(C,K_C) \iso \Ka$.

 \item If $C$ is 2-connected and $C\not \iso \proj^{1}$ then $|K_{C}|$ is base point free.

\item If $C$ is 3-connected and $C$ is not honestly hyperelliptic (i.e.,
there does not exist a finite
morphism $\psi\colon C\to\proj^1$ of degree $2$) then $K_{C}$ is very ample.
 \end{enumerate}
 \end{TEO}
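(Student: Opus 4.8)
The plan for part (i) is to apply Serre duality: $H^1(C,K_C)$ is dual to $\Hom_{\Oh_C}(\om_C,\om_C)$, and since $C$ is Gorenstein $\om_C$ is invertible, so $\Hom_{\Oh_C}(\om_C,\om_C)\iso H^0(C,\Oh_C)$. Thus (i) is equivalent to the standard fact that $h^0(\Oh_C)=1$ when $C$ is numerically connected. I would recall the usual proof of this: a hypothetical non-constant global function, after subtracting a scalar, becomes a zero divisor vanishing on a maximal proper subcurve $B$; writing $C=A+B$ with $A=C-B\neq 0$, the ideal-sheaf sequence $0\to\Oh_A(-B)\to\Oh_C\to\Oh_B\to 0$ exhibits it as a section of $\Oh_A(-B)$, which must vanish on a component of $A$ meeting $B$ — one exists since $A\cdot B\geq 1$ — contradicting the maximality of $B$. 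Alternatively this can be quoted directly from \cite{CFHR}.

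For (ii) and (iii), of which this theorem is essentially a distillation, I would follow the argument of \cite[\S\S 2--3]{CFHR}. Fix a cluster $Z\subset C$ of length $\leq 1$ for (ii), resp. $\leq 2$ for (iii). Base-point-freeness, resp. very ampleness, of $K_C$ is exactly the statement that every such $Z$ imposes $\deg Z$ independent conditions on $|K_C|=|\om_C|$, i.e. $h^0(\sI_Z\om_C)=h^0(\om_C)-\deg Z$; since $\om_C$ is invertible one has $\chi(\sI_Z\om_C)=\chi(\om_C)-\deg Z$, so by part (i) this is equivalent to $h^1(\sI_Z\om_C)=1$. By Serre duality and invertibility of $\om_C$, $h^1(\sI_Z\om_C)=\dim\Hom_{\Oh_C}(\sI_Z,\Oh_C)$; writing $\Oh_C^{Z}:=\sHom_{\Oh_C}(\sI_Z,\Oh_C)$, which (as $Z$ is $0$-dimensional) is a sheaf of rings between $\Oh_C$ and the total quotient ring $\sK_C$, the Gorenstein hypothesis gives an exact sequence $0\to\Oh_C\to\Oh_C^{Z}\to\sExt^1(\Oh_Z,\Oh_C)\to 0$ with the last term a skyscraper of length $\deg Z$ (local duality). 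Hence $1\leq h^0(\Oh_C^{Z})\leq 1+\deg Z$, and the equality we want fails exactly when $\Oh_C^{Z}$ carries a non-constant global section $\varphi$.

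The remaining, and principal, step is to exclude such a $\varphi$, and here numerical connectedness is used. Such a $\varphi$ is regular off $\Supp Z$ with polar part bounded by $Z$; restricting it componentwise, write $C=A+B$ so that $B$ carries exactly the components on which $\varphi$ is non-constant. Using the restriction sequence $0\to\om_A\to\om_C\to{\om_C}_{|B}\to 0$ and automatic adjunction (\cite[Lemma 2.4]{CFHR}), I would analyze the two pieces: on $A$, the fact that $\varphi$ is componentwise constant and lies in $\Oh_C^{Z}$ forces any two components of $A$ carrying distinct constants to meet only over $\Supp Z$; on $B$, the polar divisor of $\varphi$ has degree $\leq\deg Z$ and lies over $\Supp Z$. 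In every case except one this yields a decomposition $C=C_1+C_2$ with both $C_i$ nonzero and $C_1\cdot C_2\leq\deg Z$, contradicting $(\deg Z+1)$-connectedness — i.e. $2$-connectedness for (ii) and $3$-connectedness for (iii). The one exception is $B=C$, where $\varphi$ presents $C$ as a finite cover of $\proj^1$ of degree $\leq\deg Z$: degree one when $\deg Z=1$, forcing $C\iso\proj^1$; degree $\leq 2$ when $\deg Z=2$, forcing $C$ to be honestly hyperelliptic. Both possibilities are ruled out by the hypotheses of (ii), resp. (iii), which proves the two statements.

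The hard part is the third step. Turning the existence of a non-constant global section of $\Oh_C^{Z}$ into a decomposition $C=C_1+C_2$ with $C_1\cdot C_2$ small, rigorously — especially when $Z$ is concentrated at a singular or non-reduced point of $C$, and while keeping track of the multiplicities with which components enter $B$ — is exactly the technical core of \cite{CFHR}, which I would invoke rather than reprove; the concluding identification of the two exceptional configurations ($C\iso\proj^1$ for $\deg Z=1$, $C$ honestly hyperelliptic for $\deg Z=2$, whose structure is recalled in the Notation section above) is then a short direct verification.
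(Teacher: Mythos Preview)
The paper does not prove this theorem at all: immediately after the statement it simply cites ``Thm.~1.1, Thm.~3.3, Thm.~3.6 in \cite{CFHR}'' and moves on. Your proposal is therefore consistent with the paper's approach---both rest on \cite{CFHR}---but you go further by actually sketching the argument from that reference, which is a correct outline of how (i)--(iii) are established there.
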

(cf.  Thm. 1.1,  Thm. 3.3,  Thm. 3.6 in  \cite{CFHR}).\\

The main instrument in the analysis of sheaves on projective curves
with several components is the
following proposition, which holds in a more general
setup.
 \begin{PROP}[ \cite{CFHR}, Lemma 2.4 ]\label{lem:adj} Let $C$ be a
 projective scheme of pure dimension 1 and  let $\sF$ be a coherent
sheaf on $C$, and $\fie\colon\sF\to\om_C$ a nonvanishing map of $\Oh_C$-modules. Set
$\sJ=\Ann\fie\subset\Oh_C$, and write $B\subset C$ for the subscheme
defined by $\sJ$. Then $B$ is Cohen--Macaulay and $\fie$ has a canonical
factorization of the form
 \begin{equation}
\sF\onto\sF_{|B}\into\om_B=\sHom_{\Oh_C}(\Oh_B,\om_C)\subset\om_C,
 \label{eq:adj}
\nonumber
 \end{equation}
where $\sF_{|B}\into\om_B$ is generically onto.
 \end{PROP}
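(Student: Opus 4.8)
The plan is to let the ideal $\sJ=\Ann\fie$ carry the whole argument: the factorization falls out of the definitions, and the two substantive assertions — that $B$ is Cohen--Macaulay and that $\sF_{|B}\into\om_B$ is generically onto — reduce to local computations at the closed points, respectively at the generic points, of $B$. First note that $\fie$ nonvanishing means $\im\fie\neq 0$, so $\sJ=\Ann(\im\fie)\neq\Oh_C$ and $B$ is a nonempty subscheme. Since $\fie$ is $\Oh_C$-linear, for local sections $a$ of $\sJ$ and $s$ of $\sF$ one has $\fie(as)=a\,\fie(s)\in a\cdot\im\fie=0$; hence $\sJ\sF\subseteq\Ker\fie$ and $\fie$ factors through the canonical surjection $\sF\onto\sF_{|B}=\sF\otimes_{\Oh_C}\Oh_B$, yielding $\bar\fie\colon\sF_{|B}\to\om_C$ with $\im\bar\fie=\im\fie$. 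Applying $\sHom_{\Oh_C}(-,\om_C)$ to $\Oh_C\onto\Oh_B$ exhibits $\sHom_{\Oh_C}(\Oh_B,\om_C)$ as the subsheaf of $\om_C$ consisting of the local sections annihilated by $\sJ$; since $\sJ\cdot\im\bar\fie=0$ the map $\bar\fie$ factors through it, which already gives the chain $\sF\onto\sF_{|B}\to\sHom_{\Oh_C}(\Oh_B,\om_C)\into\om_C$.

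Next I would prove that $B$ is Cohen--Macaulay, which then identifies $\sHom_{\Oh_C}(\Oh_B,\om_C)$ with $\om_B$. The dualizing sheaf $\om_C$ has depth $\geq 1$ along $C$ — it has no nonzero local sections supported in dimension $0$ — so its nonzero subsheaf $\im\fie$ has associated points only among the generic points of $C$; hence $\Supp\Oh_B=V(\sJ)=\Supp\im\fie$ is a union of irreducible components of $C$ and $B$ is pure of dimension $1$. If $\Oh_B$ had an embedded associated point $P$, there would exist $f\in\Oh_{C,P}\setminus\sJ_P$ with $\mathfrak m_P f\subseteq\sJ_P$; then $f\cdot(\im\fie)_P$ would be a nonzero $\Oh_{C,P}$-submodule of $(\om_C)_P$ killed by $\mathfrak m_P$, contradicting $\operatorname{depth}(\om_C)_P\geq 1$. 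So $\Oh_B$ satisfies $S_1$, i.e.\ $B$ is Cohen--Macaulay of pure dimension $1$, and Grothendieck--Serre duality for the finite closed immersion $B\into C$ identifies $\sHom_{\Oh_C}(\Oh_B,\om_C)$ with the dualizing sheaf $\om_B$ (the higher $\sExt^i_{\Oh_C}(\Oh_B,\om_C)$ vanishing because $B$ is Cohen--Macaulay of the same dimension as $C$). This is precisely the ``automatic adjunction''.

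Finally, for generic surjectivity of $\sF_{|B}\to\om_B$ I would localize at a generic point $\eta$ of $B$; by the previous step $\eta$ is a generic point of $C$ too, so $R:=\Oh_{C,\eta}$ is Artinian local and $(\om_C)_\eta$ is its Matlis dual $E$. Writing $\bar R:=\Oh_{B,\eta}=R/\sJ_\eta$ and $M:=(\im\fie)_\eta\subseteq E$, the definition of $\sJ$ makes $M$ a \emph{faithful} $\bar R$-module, and $\sJ_\eta M=0$ places it inside $\Hom_R(\bar R,E)=(\om_B)_\eta$. Dualizing the inclusion $M\hookrightarrow(\om_B)_\eta$ by Matlis duality over $\bar R$ (which is exact and preserves annihilators) gives a surjection $\bar R\onto M^{\vee}$ whose kernel equals $\Ann_{\bar R}(M^{\vee})=\Ann_{\bar R}(M)=0$; hence $M^{\vee}\iso\bar R$, so $M=(\om_B)_\eta$ and $\bar\fie$ is surjective at $\eta$, as needed. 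The main obstacle is exactly this pair of points — Cohen--Macaulayness of $B$ and generic surjectivity — both resting on the depth-$1$ property of $\om_C$ and on the Matlis-duality identification at the generic points; one must also be slightly careful about the precise hypotheses on $C$ under which those facts about $\om_C$ hold, the rest being routine bookkeeping with the single ideal $\sJ$.
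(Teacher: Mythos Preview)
The paper does not supply its own proof of this proposition: it is quoted verbatim from \cite{CFHR}, Lemma~2.4, and used as a black box. Your argument is correct and is essentially the standard one (and, as far as the strategy goes, the one in \cite{CFHR}): factor through $\sF_{|B}$ using $\sJ\sF\subseteq\Ker\fie$, use $\operatorname{depth}\,\om_C\geq 1$ to exclude embedded points of $\Oh_B$, invoke duality for the closed immersion to identify $\sHom_{\Oh_C}(\Oh_B,\om_C)$ with $\om_B$, and settle the generic behaviour by Matlis duality over the Artinian localizations. One small remark: the displayed factorization in the statement uses $\into$ for the map $\sF_{|B}\to\om_B$, but as your write-up implicitly recognizes (you use $\to$ there), this map need not be injective for arbitrary coherent $\sF$; what your Matlis-duality step actually proves is that it is an \emph{isomorphism at each generic point of $B$}, which is exactly what the applications in the paper require (and which, for the rank~1 torsion-free sheaves that appear in those applications, does force global injectivity).
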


A useful corollary of the above result is the following:
 \begin{COR}\label{h1=0} Let $C$ be a
	 pure   1-dimensional
 projective scheme, let $\sF$ be a rank 1 torsion free sheaf on $C$. Assume that
 $$ \deg (\sF)_{|B} \geq  2 p_a(B) - 1$$
 for every subcurve $B \subseteq C$.

 Then $H^1(C, \sF)=0$.
 \end{COR}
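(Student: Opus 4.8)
The plan is to reduce the vanishing $H^1(C,\sF)=0$ to the nonexistence of a nonzero map $\sF\to\omega_C$, via Serre duality $H^1(C,\sF)\dual\Hom(\sF,\omega_C)$, and then to derive a contradiction from the degree hypothesis using the adjunction factorization of Proposition \ref{lem:adj}. So first I would suppose, for contradiction, that there is a nonvanishing homomorphism $\fie\colon\sF\to\omega_C$. Applying Proposition \ref{lem:adj}, let $B\subset C$ be the subscheme cut out by $\sJ=\Ann\fie$; since $\fie\neq 0$, $B$ is a nonzero subcurve, it is Cohen--Macaulay (hence Gorenstein under our standing assumptions), and $\fie$ factors as $\sF\onto\sF_{|B}\into\omega_B$ with the second map generically onto.

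The heart of the argument is then a degree count on $B$. Since $\sF_{|B}\into\omega_B$ is generically an isomorphism of rank one torsion free sheaves (both have rank one on each component of $B$), it is injective with $0$-dimensional cokernel, hence $\deg(\sF_{|B})\le\deg\omega_B=2p_a(B)-2$. But the hypothesis gives $\deg(\sF_{|B})\ge 2p_a(B)-1$, a contradiction. Therefore no nonzero map $\sF\to\omega_C$ exists, and $H^1(C,\sF)=0$.

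The one point that needs a little care — and which I would expect to be the main (modest) obstacle — is the passage from ``$\sF_{|B}\into\omega_B$ is generically onto'' to the degree inequality $\deg(\sF_{|B})\le\deg\omega_B$. One has to know that $\sF_{|B}$ (the image of $\sF$ in $\omega_B\subset\omega_C$, i.e.\ $\sF$ modulo the subsheaf it annihilates) is again rank one torsion free on the \emph{pure} one-dimensional scheme $B$, so that the generically surjective inclusion $\sF_{|B}\hookrightarrow\omega_B$ of sheaves having the same rank on every component of $B$ is forced to be injective with finite-length cokernel; then $\chi$ is monotone and $\deg(\sF_{|B})=\chi(\sF_{|B})-\chi(\Oh_B)\le\chi(\omega_B)-\chi(\Oh_B)=\deg\omega_B$. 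Here one uses that $B$ is Gorenstein so that $\omega_B$ is invertible of degree $2p_a(B)-2$, consistent with the convention $\deg\sF_{|B}=\chi(\sF)-\chi(\Oh_B)$ and the definition of $p_a(B)$ recalled above.

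In writing this up I would state it cleanly as: it suffices to prove $\Hom(\sF,\omega_C)=0$; given $0\neq\fie\in\Hom(\sF,\omega_C)$, apply Proposition \ref{lem:adj} to produce $B$ and the factorization; conclude $2p_a(B)-1\le\deg(\sF_{|B})\le\deg\omega_B=2p_a(B)-2$, which is absurd. Note that the statement does not require $C$ to be connected or Gorenstein globally — only that each subcurve $B$ is Gorenstein, which holds under the paper's blanket hypotheses — so no further reductions are needed.
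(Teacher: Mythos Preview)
Your proof is correct and follows exactly the paper's approach: assume $H^1(C,\sF)\neq 0$, use Serre duality to pick a nonzero $\fie\in\Hom(\sF,\om_C)$, apply Proposition~\ref{lem:adj} to obtain a subcurve $B$ with $\sF_{|B}\into\om_B$, and derive the contradiction $2p_a(B)-1\le\deg(\sF_{|B})\le\deg\om_B=2p_a(B)-2$. Your added justification of the degree inequality is more careful than the paper's terse statement, but the argument is the same.
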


 \begin{proof} The proof is a slight generalization of the techniques used in \cite[Lemma 2.1]{CF}.

 Let assume by contradiction that $H^1(C, \sF)\neq0$.
Pick a nonvanishing section $\varphi \in \Hom(\sF, \omega_C)\iso H^1(C, \sF)^*$. By Proposition \ref{lem:adj} there exists a curve $B$ such that $\varphi$ induces an injective map $\sF_{|B} \to \omega_B$. Thus $$\deg (\sF)_{|B} \leq \deg K_B= 2 p_a(B)-2$$ which is impossible.
\end{proof}\\

During our analysis of the curve $C$ we will need to estimate the dimension of $H^0(A, \Oh_A)$ for some subcurve $A \subset C$.  To this purpose we give a slight generalization of a result of Konno and
Mendes Lopes (see \cite[Lemma 1.4]{km}).

\begin{LEM}\label{precodimspan}
Let $C$ be a projective  curve, either   reduced with planar singularities  or contained in a smooth  algebraic surface  and let $C= A+B$ a decomposition of $C$.   Assume
$A= \sum_{i=1}^h A_i$  where the $A_i$  are the topologically connected components of $A$.

  \begin{enumerate}
	 \renewcommand\labelenumi{(\roman{enumi})}
	
	 \item  if $C$ is 1-connected then $h^0(A, \Oh_A) \leq  A\cdot B $
	
	  \item  if $C$ is 2-connected then $h^0(A, \Oh_A) \leq \frac{A\cdot B}{2}$
	
	   \item if $C$ is $m$-connected with $m\geq 3$  then $h^0(A, \Oh_A)\leq   \frac{A\cdot B}{2} -  h\cdot \frac{m-2}{2} $, where $h = \#\{A_i\}$.
	   Moreover equality holds if and only if $h^0(A_i,\Oh_{A_i})=1$ and $A_i\cdot B=m$ for every component $A_i$.
	
	 \end{enumerate}

\end{LEM}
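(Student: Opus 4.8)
The plan is to reduce everything to the topologically connected case and then run an induction on the number of components, using the exact sequence $(2)$ and the intersection formula $(3)$ as the main tools. First I would treat the case $h = 1$, i.e.\ $A$ topologically connected. Here the idea is to produce a section of $\omega_A$ with controlled vanishing: since $C$ is $m$-connected, applying Corollary \ref{h1=0} (or directly the automatic adjunction of Proposition \ref{lem:adj}) to suitable subsheaves gives $h^1(A,\Oh_A) \geq 1$ when $A \neq C$, hence by Serre duality $h^0(A,\omega_A) \geq 1$; then one compares $\deg \omega_A = 2p_a(A) - 2$ with $\deg {\omega_C}_{|A} = 2p_a(A) - 2 + A\cdot B$ via sequence $(1)$ restricted to $A$, and uses that a nonzero section of $\omega_A \subset {\omega_C}_{|A}$ together with $m$-connectedness forces $p_a(A)$ to be large. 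Concretely, for $h=1$ one wants the single inequality $2p_a(A) - 2 \geq m(h^0(A,\Oh_A) - 1) \cdot (\text{something})$; more cleanly, I expect the bookkeeping to go through $h^1(A,\Oh_A) = h^0(A,\Oh_A) - 1 + p_a(A)$ (Riemann--Roch for $\Oh_A$, using that $A$ is connected so $h^0 = 1$ actually — so in the connected case $h^0(A,\Oh_A)=1$ automatically and the claim reads $1 \leq \frac{A\cdot B}{2} - \frac{m-2}{2}$, i.e.\ $A\cdot B \geq m$, which is exactly $m$-connectedness). So the $h=1$ case of (iii) is essentially immediate, and the "equality iff" part in that case is the statement $A\cdot B = m$.

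For the inductive step, write $A = A' + A_h$ where $A'$ collects the first $h-1$ connected components and $A_h$ is the last. The sequence $0 \to \Oh_{A'}(-A_h) \to \Oh_A \to \Oh_{A_h} \to 0$ gives $h^0(A,\Oh_A) \leq h^0(A', \Oh_{A'}) + h^0(A_h, \Oh_{A_h}(-A_h))$; but $A'$ and $A_h$ are disjoint as topological components of $A$ are disjoint — wait, they need not be disjoint as subcurves of $C$. The correct decomposition is: since the $A_i$ are the connected components of $A$, we have $\Oh_A = \bigoplus_i \Oh_{A_i}$ as sheaves on $A$, so $h^0(A,\Oh_A) = \sum_i h^0(A_i,\Oh_{A_i})$, and one only needs to prove the statement for each connected $A_i$ with $B_i := C - A_i$ in place of $B$, then sum. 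The point is that $A_i \cdot (C - A_i) \geq A_i \cdot B$ need not hold directly, but rather $C - A_i \supseteq B$ and $C - A_i = B + (A - A_i)$, so $A_i \cdot (C - A_i) = A_i\cdot B + A_i \cdot (A - A_i)$, and summing $A_i \cdot (C - A_i)$ over $i$ overcounts. So the real content is to show $\sum_i A_i \cdot B \leq A \cdot B$ together with $\sum_i A_i\cdot(A-A_i) \geq$ (enough) — this is where I would be careful. Applying the connected case to each $A_i$ gives $h^0(A_i,\Oh_{A_i}) \leq \frac{A_i \cdot (C-A_i)}{2} - \frac{m-2}{2}$, and then the key combinatorial inequality is $\sum_i A_i\cdot (C - A_i) = \sum_i A_i\cdot B + \sum_i A_i\cdot(A - A_i) \leq A\cdot B + \big(\text{correction}\big)$; actually $\sum_i A_i \cdot (A - A_i) = 2\sum_{i<j} A_i\cdot A_j$ counts each cross term twice, whereas $A\cdot B = \sum_i A_i\cdot B$. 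Hmm — so $\sum_i A_i\cdot(C-A_i) = A\cdot B + 2\sum_{i<j}A_i\cdot A_j \geq A\cdot B$, going the wrong way.

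So the genuinely delicate point — and the step I expect to be the main obstacle — is that naively applying the connected case componentwise loses the cross-terms $A_i \cdot A_j$, and one must recover the factor $\frac{m-2}{2}$ per component while respecting $A\cdot B$ (not $\sum_i A_i \cdot (C-A_i)$) on the right. The fix I would pursue: do \emph{not} split into individual components, but rather induct by peeling off one component at a time and use the sequence $0 \to \Oh_{C-A_h}(-A_h) \to \Oh_C \to \Oh_{A_h} \to 0$ combined with $m$-connectedness of $C$ applied to $A_h$ to get $h^0(A_h,\Oh_{A_h}) = 1$ and $A_h\cdot(C - A_h) \geq m$; then for the restriction to $A' = A - A_h$ inside $C$, one uses $C$ is still $m$-connected and $A' \cdot (C - A') = A'\cdot B + A'\cdot A_h$, with $A'\cdot A_h \geq 0$ (true when $C$ reduced or on a surface with the $A_i$ components, by the structure of the decomposition — each $A_i\cdot A_j \geq 0$ since they are effective with no common component, or $=0$ if genuinely disjoint), so by induction $h^0(A',\Oh_{A'}) \leq \frac{A'\cdot(C-A')}{2} - (h-1)\frac{m-2}{2} = \frac{A'\cdot B + A'\cdot A_h}{2} - (h-1)\frac{m-2}{2}$. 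Adding $h^0(A_h,\Oh_{A_h}) = 1 \leq \frac{A_h\cdot(C - A_h)}{2} - \frac{m-2}{2} = \frac{A_h\cdot B + A_h\cdot A'}{2} - \frac{m-2}{2}$ gives $h^0(A,\Oh_A) \leq \frac{A\cdot B + 2A'\cdot A_h}{2} - h\frac{m-2}{2}$, which still has the unwanted $A'\cdot A_h$. The resolution must therefore be to improve the bound on $h^0(A_h,\Oh_{A_h})$: since $h^0(A_h,\Oh_{A_h}) = 1$ exactly and we need to absorb $A'\cdot A_h$, one instead bounds $h^0(A', \Oh_{A'})$ using that $A'$ meets $A_h$, replacing $A_h\cdot(C-A_h) = A_h\cdot B + A_h\cdot A'$ by only $A_h\cdot B$ in a sharpened $m$-connectedness estimate — i.e.\ one should apply the connected-case argument to $A_h$ with the sheaf $\Oh_{A_h}(-A')$ restricted appropriately, or track $\deg \Oh_{A_h}(-(C - A_h)) = -A_h\cdot(C-A_h)$ more carefully so the cross-terms cancel. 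I would set up the induction so that at each stage the quantity being bounded is exactly $A\cdot B$, by using the filtration $\Oh_{A_h}(-A') \subset \Oh_{A_h}$ and noting $h^0(A_h, \Oh_{A_h}(-A')) = 0$ whenever $A_h$ meets $A'$ (as $A_h$ is connected and $\Oh_{A_h}(-A')$ has negative degree on it or is a proper ideal sheaf), which is what makes the cross-terms land on the correct side. For the equality statement, I would chase equality back through each application: equality in (iii) for $A$ forces equality for every $A_i$, hence $h^0(A_i,\Oh_{A_i}) = 1$ (automatic) and $A_i\cdot(C - A_i) = m$ with all cross-terms $A_i\cdot A_j = 0$, giving $A_i\cdot B = m$ for every $i$, as claimed.
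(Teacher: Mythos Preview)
Your reduction to topologically connected components is sound, and your worry about cross-terms is a red herring: since the $A_i$ are the connected components of $A$, they are pairwise disjoint as closed subschemes, so $A_i \cdot A_j = 0$ for $i \neq j$. Hence $\Oh_A = \bigoplus_i \Oh_{A_i}$, $h^0(A,\Oh_A) = \sum_i h^0(A_i,\Oh_{A_i})$, and $\sum_i A_i \cdot (C - A_i) = \sum_i A_i \cdot B = A \cdot B$ exactly. So once the connected case is established, summing gives (iii) with no loss.

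The genuine gap is your treatment of the connected case. You assert that for $A$ topologically connected one has $h^0(A,\Oh_A) = 1$ ``automatically''. This is true when $A$ is reduced, but the lemma also covers curves contained in a smooth surface, where $A$ may be nonreduced and the claim fails. For instance, if $\Gamma \iso \proj^1$ is a $(-1)$-curve, the sequence $0 \to \Oh_\Gamma(-\Gamma) \to \Oh_{2\Gamma} \to \Oh_\Gamma \to 0$ splits (the extension class lies in $H^1(\Oh_\Gamma(\Gamma)) = H^1(\Oh_{\proj^1}(-1)) = 0$), giving $h^0(\Oh_{2\Gamma}) = 3$. So the ``connected case'' is where all the content lies, and you have not addressed it.

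The paper's proof does not pass through topological components at all. Instead it inducts on the number of \emph{irreducible} components of $A$, using a structural lemma of Konno--Mendes Lopes: when $h^0(A,\Oh_A) \geq 2$ there is a decomposition $A = A_1 + A_2$ with $\Oh_{A_1}(-A_2)$ NEF and with the restriction $H^0(\Oh_{A_1}(-A_2)) \to H^0(\Gamma, \Oh_\Gamma(-A_2))$ injective for every irreducible $\Gamma \subset A_1$. This yields $h^0(\Oh_{A_1}(-A_2)) \leq 1 - A_1 \cdot A_2$, and combining with the inductive bound for $A_2$ and $m$-connectedness of $C$ applied to $A_1$ gives the result. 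The equality analysis in (iii) then comes from observing that the chain of inequalities is strict as soon as some connected component has $h^0 \geq 2$; your argument cannot see this because you have assumed it away.
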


\begin{proof}  The 1-connected case  is treated in    \cite[Lemma 1.4]{km}.
We will apply the same arguments for the m-connected case  with $m \geq 2$.

If  $h^0(A, \Oh_A)=1$ the inequality holds trivially.
 If $h^0(A, \Oh_A) \geq 2$ then by  \cite[Lemma 1.2]{km} there exist a decomposition $A=A_1+A_2$
with  $\Oh_{A_1}(-A_2)$ NEF and such that the restriction map $H^0(\Oh_{A_1}(-A_2)) \to H^0(\Gamma, \Oh_{\Gamma}(-A_2))$ is injective for every irreducible $\Gamma\subset A_1$.
Since  $\Oh_{A_1}(-A_2)$ is  NEF we can conclude that
$$h^0(\Oh_{A_1}(-A_2)) \leq h^0(\Gamma, \Oh_{\Gamma}(-A_2)) \leq 1-A_2\cdot \Gamma \leq 1-A_1\cdot A_2.$$
Therefore by induction on the number of irreducible components of $A$ we get
\begin{eqnarray*}
h^0(A, \Oh_A) \leq h^0(A_2, \Oh_{A_2}) + h^0(\Oh_{A_1}(-A_2)) \leq
 \frac{A_2 \cdot(C-A_2)}{2} -  \frac{m-2}{2} +1 -A_1\cdot A_2  \\
= \frac{A\cdot (C-A)}{2} -\frac{m-2}{2}+1 - \frac{ A_1\cdot (C-A_1)}{2}  \leq
  \frac{A\cdot (C-A)}{2} -\frac{m-2}{2}+1 -\frac{m }{2}
\end{eqnarray*}
This is enough to prove  ({\em ii}).
Applying the above dimension count to every  topologically connected component of $A$ we get the inequality stated in  ({\em iii}). Moreover
if $m \geq 3$ and $h^0(A_i, \Oh_{A_i}) \geq 2 $ for a topologically connected component  $A_i \subset A$  then by the above computation we have
$h^0(A_i, \Oh_{A_{i}})<  \frac{{A_i}\cdot B}{2} - \frac{m-2}{2} $. Therefore  equality holds if and only if  for every $A_i$ we have  $h^0(A_i, \Oh_{A_{i}})=1$ and
$A_i\cdot(C-A_i)= A_i\cdot B=m$. \end{proof}

\subsection{Subcanonical clusters and Clifford index}\label{subproperties}

In this section we introduce the notion of subcanonical cluster and we analyze its main properties.
Notice that  our results works under the assumption $C$ Gorenstein.

\begin{DEF} \label{canonical cluster}
Let $C$ be a   Gorenstein curve.   A cluster $S \subset C$ is {\em subcanonical} if the space $H^0(C, \sI_S K_C)$ contains a generically invertible section, i.e. a section $s_0$ which does not vanish on any subcurve of $C$.

\end{DEF}

Notice that if $S$ is a general effective Cartier divisor such that the inequality  $\deg_B (S) \leq  \frac{1}{2} \deg_B (\omega_C) $  holds for every subcurve $B\subseteq C$ (or by duality such that its multidegree satisfies
$ \frac{1}{2} \deg_B (\omega_C)  \leq \deg_B (S) \leq  \deg_B (\omega_C) $  for every subcurve $B\subseteq C$)
 then by \cite{Fr}
$S$ is  a  subcanonical cluster.

\begin{DEF}\label{residual_definition}  Let $C$ be a Gorenstein curve,
$S \subset C$ be a  subcanonical cluster and let  $s_0 \in H^0(C, \sI_S K_C)$ be a generically invertible section.
The residual cluster $S^{\ast}$  of $S$  with respect to $s_0$ is defined by  the following exact sequence
 $$\xymatrix{
 0  \ar[r]  &  \sHom(\sI_S \omega_C, \omega_C) \ar[r]^{\alpha}   & \sHom(\Oh_C, \om_C)  \ar[r]   & \Oh_{S^{\ast}}  \ar[r] & 0
 }$$
 where the the map  $\alpha$ is defined by $\alpha(\varphi): 1 \mapsto \varphi(s_0)$.
 \end{DEF}
By  duality it is
$ \sI_{S^{\ast}} \omega_C \iso  \sHom(\sI_S \om_C, \om_C) $. Moreover,
denoting by
$\Lambda := div(s_0)$  the  effective divisor corresponding to $s_0$ we
 have  the following exact sequence
$$ 0 \to \sI_{\Lambda} \om_C \to \sI_S \omega_C \to    \Oh_{S^{\ast}}  \to 0 $$
 Therefore $S^{\ast}$ is subcanonical  since
 $s_0 \in H^0(C, \sI_{S^{\ast}} K_C)$  and it is  straightforward to see that $(S^{\ast})^{\ast} = S$.

 Notice that   if $C$ is contained in a smooth surface and
$s_0$ is transverse  to $C$ at  a point $P\in \Supp(S)$ such that $P$ is smooth for $C_{\red}$ and $C$ has multiplicity $n$ at $P$, writing  $\sI_{\Lambda} = (x) $ and   $\sI_S=(x, y^k) \subset  \Ka[x,y]/ (x,y^n)$, then $\sI_{S^{\ast}} \iso (x, y^{n-k})$.
\begin{REM}
If $S$ is a subcanonical cluster and $S^{\ast}$ is its residual  with respect to the section $s_0$, then the sheaf $\sI_{S^{\ast}} \omega_C$ is the subsheaf of $\omega_C$ given as follows:
\begin{equation}\label{iso_adj}\sI_{S^{\ast}} \omega_C = \{ \varphi(s_0) \mbox{ s. t. } \varphi \in \sHom(\sI_S \omega_C, \omega_C)\}.\end{equation}
This is clear from the analysis of the commutative diagram
$$\xymatrix{
 0  \ar[r]  &  \sHom(\sI_S \omega_C, \omega_C) \ar[r]^{\alpha}  \ar[d]^{\beta_1} & \sHom(\Oh_C, \om_C)  \ar[r] \ar[d]^{\beta_2}   & \Oh_{S^{\ast}}  \ar[r] \ar@2{-}[d]  & 0 \\
0  \ar[r]   & \sI_{S^{\ast}} \omega_C  \ar[r]     & \omega_C   \ar[r]    & \Oh_{S^{\ast}}  \ar[r]   &  0  }$$
where the the map  $\alpha$ is defined by $\alpha(\varphi): 1 \mapsto \varphi(s_0)$ and the maps $\beta_1$ and $\beta_2$ are isomorphisms.
 \end{REM}
 \begin{REM} The product map $H^0(C,  \sI_S K_C) \otimes H^0(C, \sI_{S^{\ast}} K_C) \to H^0(C, 2 K_C)$ satisfies the following commutative diagram:
 $$\xymatrix{
  H^0(C,\sI_S K_C) \otimes  \Hom(\sI_S K_C, K_C)      \ar[d]^{\beta}    \ar[r]^{ \ \ \ \ \ \ \ \ \ \ \ \ \ \ \ \ \ \text{ev}}  & H^0(C, K_C)  \ar[d]^{ \cdot s_0}  \\
  H^0(C,\sI_S K_C) \otimes H^0(C,\sI_{S^{\ast}} K_C) \ar[r]   & H^0(C,2 K_C)\\
 }$$
where the first row is the evaluation map $i \otimes \varphi \mapsto \varphi(i)$, the map $\beta$ is the isomorphism defined  by    $
\beta(i \otimes \varphi)  =i \otimes \varphi(s_0)$, and the second column is the multiplication by the section $s_0$ defining the residual $S^{\ast}$.

The diagram is commutative: on the stalks the elements $s_0 \cdot \varphi(i)$ and $i \cdot \varphi(s_0)$ must coincide.
In particular consider $ i \in H^0(C,  \sI_S K_C) $ and $ j \in H^0(C, \sI_{S^{\ast} } K_C)$:  we can write $j = \varphi (s_0)$ for some $\varphi \in \Hom(\sI_S K_C, K_C)$,  hence we have
\begin{equation}\label{prodotto} i \cdot j = i \cdot \varphi(s_0) = s_0 \cdot \varphi(i) \text{ in } H^0(C, 2K_C).
\end{equation}
\end{REM}

\begin{REM}\label{Serre}
Notice that, by Serre duality, it is  $H^1(C,\sI_S K_C) \dual H^0(C, \sI_{S^{\ast}} K_C)$, and
$\cliff(\sI_S K_C)=\cliff(\sI_{S^{\ast}} K_C)$. \\
 \end{REM}
The following technical lemmas will be useful in the proof of Theorem~A.
\begin{LEM}\label{sub-canonical}
Let $C$ be a Gorenstein curve. Let $S$, $S^{\ast}$, $T$, $T^{\ast}$ subcanonical clusters such that
 \begin{enumerate}
  \renewcommand\labelenumi{(\roman{enumi})}
\item $S^{\ast}$ is the  residual to   $S$ with respect to $H_0 \in H^0(C, \sI_{S}K_C)$
\item $T^{\ast}$ is the  residual to   $S$ with respect to $H_1 \in H^0(C, \sI_{S}K_C)$
\item T is the  residual to   $S^{\ast}$ with respect to $H_2 \in H^0(C, \sI_{S^{\ast}}K_C)$.
\end{enumerate}
Then the cluster $U$ defined as the union of $T$ and $T^{\ast}$  (i.e., $\sI_{U} = \sI_{T} \cap \sI_{T^{\ast}}$) and the cluster  defined by the intersection $R = T \cap T^{\ast}$ (i.e., $\sI_R = \sI_T + \sI_{T^{\ast}}$) are subcanonical.

\end{LEM}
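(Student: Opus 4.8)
The plan is to handle the two clusters separately. The cluster $R=T\cap T^{\ast}$ satisfies $\sI_R=\sI_T+\sI_{T^{\ast}}\supseteq\sI_T$, so it is a subscheme of $T$ and hence $H^0(C,\sI_T K_C)\subseteq H^0(C,\sI_R K_C)$; since $T$ is subcanonical it carries a generically invertible section (for instance $H_2$, which vanishes on $T$ by construction of the residual), and that section then also lies in $H^0(C,\sI_R K_C)$. Hence $R$ is subcanonical, and this part is routine.

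The real content is the cluster $U=T\cup T^{\ast}$. Since $\sI_U=\sI_T\cap\sI_{T^{\ast}}$, a section of $K_C$ vanishes on $U$ precisely when it vanishes both on $T$ and on $T^{\ast}$, so it suffices to exhibit a generically invertible $G\in H^0(C,K_C)$ vanishing on $T$ and on $T^{\ast}$ --- morally the section ``$H_1H_2/H_0$''. I would first choose the unique $\varphi\in\Hom(\sI_S\omega_C,\omega_C)$ with $\varphi(H_0)=H_2$: it exists because $H_2\in H^0(C,\sI_{S^{\ast}}\omega_C)$ and, $S^{\ast}$ being the residual of $S$ with respect to $H_0$, Definition \ref{residual_definition} identifies $\sI_{S^{\ast}}\omega_C$ with $\sHom(\sI_S\omega_C,\omega_C)$ via $\eta\mapsto\eta(H_0)$; it is unique because $H_0$ generates $\sI_S\omega_C$ at the generic point of every component and $\omega_C$ is torsion free. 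Setting $G:=\varphi(H_1)\in H^0(C,K_C)$, equation (\ref{prodotto}) applied to $i=H_1$, $j=H_2$ gives $H_1H_2=H_0\cdot G$ in $H^0(C,2K_C)$; as $H_0,H_1,H_2$ are generically invertible so is $H_1H_2$, and this equality forces $G$ not to vanish at the generic point of any component, i.e. $G$ is generically invertible.

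It then remains to check that $G$ vanishes on both $T^{\ast}$ and $T$. For $T^{\ast}$: since $T^{\ast}$ is the residual of $S$ with respect to $H_1$, equation (\ref{iso_adj}) reads $\sI_{T^{\ast}}\omega_C=\{\,\eta(H_1):\eta\in\sHom(\sI_S\omega_C,\omega_C)\,\}$, and $G=\varphi(H_1)$ with $\varphi$ a section of $\sHom(\sI_S\omega_C,\omega_C)$, so $G\in H^0(C,\sI_{T^{\ast}}K_C)$. For $T$: using $(S^{\ast})^{\ast}=S$, choose the unique $\psi\in\Hom(\sI_{S^{\ast}}\omega_C,\omega_C)$ with $\psi(H_0)=H_1$ and apply (\ref{prodotto}) with the roles of $S$ and $S^{\ast}$ interchanged to $i=H_2$, $j=H_1=\psi(H_0)$; this yields $H_2H_1=H_0\cdot\psi(H_2)$, hence $\psi(H_2)=G$ after cancelling $H_0$ (multiplication by $H_0$ is injective on $H^0(C,\omega_C)$, $\omega_C$ being torsion free). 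Since $T$ is the residual of $S^{\ast}$ with respect to $H_2$, equation (\ref{iso_adj}) gives $\sI_T\omega_C=\{\,\eta(H_2):\eta\in\sHom(\sI_{S^{\ast}}\omega_C,\omega_C)\,\}$, whence $G=\psi(H_2)\in H^0(C,\sI_T K_C)$. Thus $G\in H^0(C,\sI_U K_C)$ is generically invertible and $U$ is subcanonical.

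The main obstacle is exactly this last maneuver: the single section $G$ must be recognized at once as $\varphi(H_1)$ with $\varphi$ living on $\sI_S\omega_C$ (which forces vanishing on $T^{\ast}$) and as $\psi(H_2)$ with $\psi$ living on $\sI_{S^{\ast}}\omega_C$ (which forces vanishing on $T$). This works only because the two expressions for the ``product'' $H_1H_2$ furnished by (\ref{prodotto}) --- once with the triple $(S,S^{\ast},H_0)$ and once with $(S^{\ast},S,H_0)$, the latter legitimate since $(S^{\ast})^{\ast}=S$ --- coincide; the care required is in tracking the identification $\sI_{S^{\ast}}\omega_C\cong\sHom(\sI_S\omega_C,\omega_C)$ together with the corresponding one in the opposite direction.
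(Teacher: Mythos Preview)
Your proof is correct and follows essentially the same approach as the paper: you build the section $G=\varphi(H_1)=\psi(H_2)$ (the paper's $\psi_2(H_1)=\varphi_1(H_2)$), use equation~(\ref{prodotto}) twice to identify the two expressions via $H_0\cdot G=H_1\cdot H_2$, and then read off from (\ref{iso_adj}) that $G$ lies in both $H^0(C,\sI_{T^{\ast}}K_C)$ and $H^0(C,\sI_T K_C)$. Your write-up is a bit more explicit about uniqueness of $\varphi,\psi$ and the injectivity of multiplication by $H_0$, but the argument is the same.
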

\begin{proof}
$R$ is obviously subcanonical since it is contained in the   cluster $T$.

Since $H_1 \in H^0(C, \sI_{S}K_C)$ there exists an element $\varphi_1 \in \Hom(\sI_{S^{\ast}} K_C, K_C)$ such that $H_1 = \varphi_1(H_0)$ by Equation (\ref{iso_adj}). Similarly there exists $\psi_2 \in \Hom(\sI_{S} K_C, K_C)$ such that $H_2 = \psi_2(H_0)$.

By Equation (\ref{iso_adj}) $\psi_2(H_1) \in H^0(C,\sI_{T^{\ast}} K_C)$ and $\varphi_1(H_2)\in H^0(C,\sI_{T} K_C)$.

By Equation (\ref{prodotto}) we have
\begin{equation}\label{sezione_unione} H_0 \cdot  \psi_2(H_1) = H_1 \cdot \psi_2(H_0) = H_1 \cdot H_2= \varphi_1(H_0) \cdot H_2 = H_0 \cdot \varphi_1 (H_2)\end{equation}
and since $H_0, \, H_1$ and $H_2$ are generically invertible we conclude that $\psi_2(H_1) = \varphi_1(H_2)$ in $H^0(C, K_C)$ and it is generically invertible.
In particular
$$\psi_2(H_1) = \varphi_1(H_2) \in H^0(C, \sI_T K_C) \cap H^0(C, \sI_{T^{\ast}} K_C) \subset H^0(C, \sI_U K_C)$$
and we may conclude. \end{proof}

\begin{REM}
It is not difficult to prove that the clusters $T$ and $T^{\ast}$ defined in the previous Lemma are reciprocally residual with respect to the section $H_3=\psi_2(H_1) = \varphi_1(H_2) \in H^0(C, K_C)$. This induces an equivalence relation on the set of clusters with properties similar to the classical linear equivalence relation between divisors.
\end{REM}
\begin{DEF}
A nontrivial subcanonical cluster $S$ is called {\em splitting} for the linear system $|K_C|$ if for every $H \in H^0(C,\sI_S K_C)$ there exists a decomposition $H= H_1 + H_2$ with $H_1,\,H_2 \in H^0(C,\sI_S K_C)$ and a decomposition $C_{\red}=C_1 + C_2$  such that $\Supp({H_1}_{|C_\red})\subset C_1$ and $\Supp({H_2}_{|C_\red})\subset C_2$.

The {\em splitting index} of $S$ is the minimal number $k$ such that  for every element $H \in H^0(C,\sI_S K_C)$ there exists a decomposition $H= \sum_{i=0}^k H_i$ with $H_i \in H^0(C,\sI_S K_C)$ and a decomposition $C_{\red}=\sum_{i=0}^k C_i$  such that $\Supp({H_i}_{|C_\red})\subset C_i$. We define the splitting index of the zero cluster to be zero.
\end{DEF}

\begin{PROP}\label{generic_splitting}
Let $C=\sum_{i=1}^{s} n_{i} \Gamma_{i}$ be a Gorenstein curve and let $S$ be a subcanonical cluster. Then the following properties hold.

\begin{enumerate}
\item If the splitting index of $S$ is $k$ then there is a decomposition $C_{\red}=\sum_{i=0}^k C_i$ such that every $\overline{H} \in H^0(C,\sI_S K_C)$ can be decomposed as $\overline{H}= \sum_{i=0}^k \overline{H}_i$ with $\Supp({\overline{H}_i}_{|C_\red})\subset C_i$. Moreover if $\overline{H}$ is generic then the sections $\overline{H}_i$ can not be further decomposed.
\item Given the above minimal decomposition $C_{\red}=\sum_{i=0}^k C_i$ we have that $C_i \cap C_j$ is in the base locus of $|\sI_S K_C|$ for every $i$ and $j$.
\item If there exists a section $H \in H^0(C,\sI_S K_C)$ such that $\div(H) \cap (\Gamma_i \cap \Gamma_j)= \emptyset$ for every $\Gamma_i \neq \Gamma_j$ irreducible components in $C$, then the splitting index of $S$ is zero.
\end{enumerate}
\end{PROP}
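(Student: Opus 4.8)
The plan is to translate all three statements into linear algebra over the finite-dimensional space $V:=H^0(C,\sI_S K_C)$. For a decomposition $\mathcal C\colon C_\red=\sum_i C_i$ into subcurves, call $H\in V$ \emph{split along $\mathcal C$} if $H=\sum_i H_i$ with $H_i\in V$ and $\Supp(H_i|_{C_\red})\subseteq C_i$. The first thing I would check is that the set $U_{\mathcal C}\subseteq V$ of sections split along $\mathcal C$ is a \emph{linear subspace}: on an irreducible component $\Gamma\subseteq C_i$ only the $i$-th piece can be nonzero on $C_\red$, so $H_i|_\Gamma=H|_\Gamma$, and the condition $\Supp(H_i|_{C_\red})\subseteq C_i$ is equivalent to $H_i\in W_i$, where $W_i\subseteq V$ is the subspace of sections whose restriction to $C_\red$ vanishes on $C_\red-C_i$; hence $U_{\mathcal C}=\sum_i W_i$. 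Now the hypothesis that the splitting index is $k$ says precisely that $V$ is the (finite) union of the $U_{\mathcal C}$ over decompositions $\mathcal C$ into $k+1$ parts, while no such union over decompositions into fewer parts exhausts $V$. Since $\Ka$ is infinite, $V$ cannot be a finite union of proper subspaces, so $U_{\mathcal C}=V$ for some $\mathcal C=\{C_0,\dots,C_k\}$ with $k+1$ parts; this one decomposition then works for every $\overline{H}$, which is the first half of (1).

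For the rest of (1) I would show that there is a \emph{unique finest} decomposition $\mathcal C_\ast$ with $U_{\mathcal C_\ast}=V$, and that it is the one just produced. The key step — which I expect to be the main obstacle — is the \emph{refinement lemma}: if $U_{\mathcal C}=U_{\mathcal C'}=V$ then $U_{\mathcal C\wedge\mathcal C'}=V$ for the common refinement. Indeed, given $H\in V$, split it along $\mathcal C$ as $H=\sum_i H_i$; each $H_i$ lies in $V=U_{\mathcal C'}$, so split it again along $\mathcal C'$; since each piece thus obtained is, on $C_\red$, dominated by $H_i$, it is supported inside $C_i\cap C_j'$, and the double sum is $H$ — a splitting along $\mathcal C\wedge\mathcal C'$. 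Taking the common refinement of the finitely many decompositions with $U_{\mathcal C}=V$ produces $\mathcal C_\ast$, and (because coarsening a decomposition only enlarges $U_{\mathcal C}$) the decompositions with $U=V$ are exactly the coarsenings of $\mathcal C_\ast$; in particular $\mathcal C_\ast$ has $k+1$ parts. Declaring $\overline{H}$ \emph{generic} if it lies outside the finite union of those $U_{\mathcal C}$ that are proper, we get that $\overline{H}$ is split along $\mathcal C$ iff $U_{\mathcal C}=V$, i.e. iff $\mathcal C$ is a coarsening of $\mathcal C_\ast$. Hence in the splitting $\overline{H}=\sum_i\overline{H}_i$ along $\mathcal C_\ast$ no piece $\overline{H}_i$ can be split along a nontrivial decomposition $C_i=C'+C''$, since that would exhibit $\overline{H}$ split along a proper refinement of $\mathcal C_\ast$; this is the second half of (1).

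Statement (2) is then immediate. Fix the decomposition $\mathcal C_\ast=\{C_i\}$ from (1) and indices $i\neq j$. Every $H\in V$ splits as $H=\sum_l H_l$, and each $H_l$ vanishes on $C_\red-C_l$. For every $l$ the intersection $C_i\cap C_j$ is contained in $C_\red-C_l$: for $l\notin\{i,j\}$ because $C_l$ shares no irreducible component with $C_i$ or $C_j$, for $l=i$ because $C_i\cap C_j\subseteq C_j\subseteq C_\red-C_i$, and symmetrically for $l=j$. Hence every $H_l$, and so $H$, vanishes on $C_i\cap C_j$; as $H$ was arbitrary, $C_i\cap C_j\subseteq\Bs|\sI_S K_C|$.

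Finally, for (3) I would argue by contradiction: suppose the splitting index is positive. Then by (1) the decomposition $\mathcal C_\ast$ has at least two parts, so — using that $C$ is connected, hence its dual graph is connected — there are irreducible components $\Gamma_a\subseteq C_i$ and $\Gamma_b\subseteq C_j$ with $i\neq j$ and $\Gamma_a\cap\Gamma_b\neq\emptyset$. By (2), $\Gamma_a\cap\Gamma_b\subseteq C_i\cap C_j\subseteq\Bs|\sI_S K_C|$, so every section of $\sI_S K_C$ — in particular the section $H$ given by the hypothesis — vanishes on the nonempty scheme $\Gamma_a\cap\Gamma_b$, i.e. $\div(H)\cap(\Gamma_a\cap\Gamma_b)\neq\emptyset$, contradicting the assumption; thus the splitting index is $0$. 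Besides the refinement lemma, the only care needed is in the nonreduced case (a section of $\sI_S K_C$ on $C$ is not determined by its restriction to $C_\red$): but since ``split along $\mathcal C$'' is used throughout as the primitive notion and all conditions are imposed on restrictions to $C_\red$, the subspaces $U_{\mathcal C}$ and the refinement lemma behave exactly as in the reduced case.
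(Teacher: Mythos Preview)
Your proof is correct and follows essentially the same approach as the paper: both arguments rest on the observation that the set of sections splittable along a fixed decomposition $\mathcal C$ is a linear subspace of $V=H^0(C,\sI_S K_C)$, together with the fact that a vector space over an infinite field cannot be a finite union of proper subspaces. Your refinement lemma and the resulting unique finest decomposition $\mathcal C_\ast$ make the second half of (1) more explicit than the paper's terse ``similarly, we can prove\dots'', but the content is the same --- in particular, the reason $\mathcal C_\ast$ has exactly $k+1$ parts (which you state without elaboration) is that a coarsening of $\mathcal C_\ast$ to $k$ parts would still have $U=V$, contradicting the minimality of $k$.
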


\begin{proof}
To prove the first statement, since the possible decompositions of $C_{\operatorname{red}}$ are finite, there exists a decomposition $C_{\operatorname{red}}=\sum_{i=0}^{k} C_i$ such that the generic element $\overline{H} \in H^0(C,\sI_S K_C)$ decomposes as $\overline{H}= \sum_{i=0}^k \overline{H}_i$, $\Supp({\overline{H}_i}_{|C_\red})\subset C_i$. Call $Y$ the set of sections with this property, we are going to show that $Y= H^0(C,\sI_S K_C)$. $Y$ is obviously a linear subspace of  $H^0(C,\sI_S K_C)$ and, since it is dense, it must coincide with the entire space. \\

Similarly, we can prove that the subset $X$ of $H^0(C,\sI_S K_C)$ whose elements can be decomposed in at least $k+2$ summands is the union of a finite number of proper subspaces of $H^0(C,\sI_S K_C)$, hence its complement is open.\\

To prove the second statement, assume that there exists a decomposition $H= H_1 + H_2$ with $H_1,\,H_2 \in H^0(C,\sI_S K_C)$ and a decomposition $C_{\red}=C_1 + C_2$  such that $\Supp({H_1}_{|C_\red})\subset C_1$ and $\Supp({H_2}_{|C_\red})\subset C_2$. Then $H_1$ and $H_2$ vanish on $C_1 \cap C_2$, hence $H$ vanishes there too.\\

In particular if $\div(H) \cap (\Gamma_i \cap \Gamma_j)= \emptyset$ for every $\Gamma_i \neq \Gamma_j$, such a decomposition can not exist. The third statement follows easily from the second.
\end{proof}

\begin{REM}\label{splitting_aggiunto} If $S$ is a subcanonical cluster and $S^{\ast}$ is  its residual with respect to a section $H$ then their splitting indexes are the same. Indeed, $H^0(C, \sI_{S^{\ast}}K_C)= \{\varphi(H)\, s.t. \varphi \in \Hom(\sI_S K_C, K_C)\}$ and if $H$ can be decomposed as in Lemma \ref{generic_splitting}, then the same is true for $\varphi(H)$. By the symmetry of the situation we may conclude.
\end{REM}

\begin{LEM}\label{bpf}Let $C$ be a 1-connected  Gorenstein curve and  let $S$ be a non trivial subcanonical cluster with
minimal Clifford index among the clusters with splitting index smaller than or equal  to $k \in \Na$. Then $H^0(C, \sI_SK_C) $ is  base point free, that is,   for every $P\in C$
the evaluation map
$$H^0(C, \sI_SK_C) \otimes  \Oh_{C,P}  \to  {\sI_{S}}_{|P} \subset \Oh_{C,P}$$  generates   the ideal
$ {\sI_{S}}_{|P}$ as $\Oh_{C,P}-$module.
\end{LEM}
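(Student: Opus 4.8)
The plan is to argue by contradiction, using the standard ``absorption'' trick for a cluster of minimal Clifford index: if $H^0(C,\sI_S K_C)$ had a base point, one could enlarge $S$ to a subcanonical cluster with the \emph{same} space of sections but strictly larger degree, hence strictly smaller Clifford index and unchanged splitting index, contradicting the minimality of $S$.

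So suppose that for some $P\in C$ the image of the evaluation map $H^0(C,\sI_S K_C)\otimes\Oh_{C,P}\to (\sI_S)_{|P}\subset\Oh_{C,P}$ (with $K_C$ trivialised near $P$) is a \emph{proper} ideal $J\subsetneq (\sI_S)_{|P}$. First I would check that $J$ is the ideal of a genuine cluster, i.e.\ that $\Oh_{C,P}/J$ has finite length: since $S$ is subcanonical it carries a generically invertible section $s_0$, whose germ at $P$ lies in $J$ and is a non-zerodivisor in the one-dimensional Cohen--Macaulay ring $\Oh_{C,P}$ (it lies in no minimal prime, as $s_0$ vanishes on no subcurve of $C$), and a non-zerodivisor in such a ring generates an ideal of finite colength. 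Let $S'$ be the cluster whose ideal sheaf equals $\sI_S$ away from $P$ and equals $J$ at $P$; then $\sI_{S'}\subsetneq\sI_S$, so $\deg S'>\deg S$. By the very definition of $J$ every global section of $\sI_S K_C$ already lies in $\sI_{S'}K_C$, and the reverse inclusion is obvious, so
\[
H^0(C,\sI_{S'}K_C)=H^0(C,\sI_S K_C).
\]

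From this equality I would extract three facts. First, $S'$ is subcanonical, since the generically invertible $s_0$ lies in $H^0(C,\sI_{S'}K_C)$. Second, $S'$ has the same splitting index as $S$: the decompositions $\overline H=\sum\overline H_i$ and $C_{\red}=\sum C_i$ in the definition of splitting index refer only to the linear subspace $H^0(C,\sI_S K_C)\subseteq H^0(C,K_C)$ and to $C_{\red}$, not to the cluster itself, so the splitting index of $S'$ is again $\leq k$. Third, since $h^0$ is unchanged while $\deg S'>\deg S$,
\[
\cliff(\sI_{S'}K_C)=2p_a(C)-\deg S'-2h^0(C,\sI_{S'}K_C)<2p_a(C)-\deg S-2h^0(C,\sI_S K_C)=\cliff(\sI_S K_C).
\]
Hence $S'$ is a subcanonical cluster of splitting index $\leq k$ whose Clifford index is strictly smaller than that of $S$, contradicting the choice of $S$; therefore $H^0(C,\sI_S K_C)$ is base point free.

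I expect the only delicate point to be the middle step: verifying that $J$ really defines a $0$-dimensional subscheme and that $H^0$ does not change when passing from $S$ to $S'$. Both reduce to the single observation that a generically invertible section of $\sI_S K_C$ restricts to a non-zerodivisor in every local ring of the Gorenstein curve $C$; once this is in hand, the remaining arguments are purely formal manipulations of the definitions of subcanonical cluster, splitting index, and Clifford index.
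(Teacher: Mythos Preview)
Your proof is correct and follows essentially the same minimality-of-Clifford-index strategy as the paper. The one notable difference is in execution: the paper enlarges $S$ by a single point to a colength-$1$ cluster $T$ and then must treat three cases ($T$ not subcanonical; $T$ subcanonical with splitting index $>k$; $T$ subcanonical with splitting index $\le k$), using the $1$-connectedness of $C$ to dispose of the first. You instead pass directly to the local base locus $S'$, which by construction has $H^0(C,\sI_{S'}K_C)=H^0(C,\sI_SK_C)$; this equality immediately forces $S'$ to be subcanonical (via $s_0$) and to have the same splitting index (since that invariant depends only on the linear subspace of $H^0(C,K_C)$), so the case analysis disappears. Your observation that the germ of $s_0$ is a non-zerodivisor in the Cohen--Macaulay local ring is exactly the point needed to ensure $S'$ is a genuine cluster; this replaces the paper's use of $1$-connectedness. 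The two arguments are logically equivalent reformulations of the same idea, yours being a bit more streamlined.
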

\begin{proof}
 The statement is equivalent to say that  for every subscheme  $T$  containing $S$ with $\length (T)= \length (S)+1$,
it is
  $h^0(C, \sI_TK_C) < h^0(C, \sI_SK_C)$.

  If $T$  is not subcanonical  then by definition of subcanonical cluster there exists a decomposition $C=A+B$  and a suitable cluster $T_A$ with support on $A$ such that
  $$ H^0(A, \sI_{T_A} \omega_A) \iso H^0(C, \sI_TK_C) $$
 and then we conclude since
$$ H^0(A, \sI_{T_A} \omega_A) \into  H^0(A, \sI_{S_A}\omega_A) $$
and $h^0(A, \sI_{S_A}\omega_A) < h^0(C, \sI_SK_C)$ because $S$ is subcanonical and $C$ is 1-connected.

If $T$ is subcanonical and its
splitting index is greater than $k$ then necessarily  the vector spaces $H^0(C, \sI_TK_C) $ and $H^0(C, \sI_SK_C)$  cannot be equal.

If  $T$ is subcanonical and its splitting index is smaller  than or equal  to $k$ then 
\begin{eqnarray}\nonumber
\cliff(\sI_TK_C) 
= 2p_a(C)-\deg(S)-1 - 2h^0(C, \sI_TK_C) \geq \cliff(\sI_SK_C)
\end{eqnarray}
if and only if  $h^0(C, \sI_TK_C) < h^0(C, \sI_SK_C)$.
 \end{proof}

\section{ Clifford's theorem }

In this section we will prove Theorem A.
The proof of the theorem is given arguing by contradiction by assuming the existence of a very special cluster for which its Clifford index is non-positive.

The first two lemmas works under the assumption $C$ Gorenstein. The rest of the section needs  an  assumption on  the singularities of $C$,
namely $C$ with planar singularities, or $C$ contained in a smooth algebraic surface if  non reduced.

In the following Lemma we will show that there exists a special relation between a maximal cluster with non-positive Clifford index and its residual with respect to a generic section.

\begin{LEM}\label{duality}
Let $C$ be a 2-connected   Gorenstein curve. 
Fix $k \in \Na$ and let $S$ be a nontrivial subcanonical cluster with minimal non-positive Clifford index  and  maximal total degree among the clusters with splitting index smaller than or equal to $k$. Let $S^{\ast}, \,T$, $T^{\ast}$ be  subcanonical clusters such that

 \begin{enumerate}
  \renewcommand\labelenumi{(\roman{enumi})}
\item $S^{\ast}$ is the  residual to   $S$ with respect to a generic section $H_0 \in H^0(C, \sI_{S}K_C)$
\item $T^{\ast}$ is the  residual to   $S$ with respect to a generic section $H_1 \in H^0(C, \sI_{S}K_C)$
\item $T$ is the  residual to   $S^{\ast}$ with respect to a generic section $H_2 \in H^0(C, \sI_{S^{\ast}}K_C)$.
\end{enumerate}
Then either $T^{\ast} \cap T=\emptyset$ and $\cliff(\sI_S K_C)=0$ or $T^{\ast} \subset T$.
\end{LEM}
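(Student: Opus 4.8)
The plan is to exploit the equation $H_1 \cdot H_2 = H_0 \cdot \psi_2(H_1) = H_0 \cdot \varphi_1(H_2)$ from the proof of Lemma \ref{sub-canonical}, which shows that $T$ and $T^{\ast}$ are reciprocally residual with respect to the generically invertible section $H_3 := \psi_2(H_1) = \varphi_1(H_2) \in H^0(C, K_C)$. By Remark \ref{splitting_aggiunto} the splitting index of $T$ (and of $T^{\ast}$) equals that of $S$, hence is $\le k$, so $T$ and $T^{\ast}$ are both admissible competitors in the extremal problem defining $S$. By Remark \ref{Serre}, $\cliff(\sI_T K_C) = \cliff(\sI_{T^{\ast}} K_C)$. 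The first step is therefore to compute $\cliff(\sI_T K_C)$ in terms of $\deg(T)$ and to compare $\deg(T) + \deg(T^{\ast})$ with $\deg(S) + \deg(S^{\ast})$. Since $T$ and $T^{\ast}$ are mutually residual via $H_3$, we have $\deg(T) + \deg(T^{\ast}) = \deg(\div H_3) = \deg K_C = 2p_a(C) - 2$; likewise $\deg(S) + \deg(S^{\ast}) = 2p_a(C) - 2$. Combined with $h^0(\sI_T K_C) + h^0(\sI_{T^{\ast}} K_C) = h^0(\sI_S K_C) + h^0(\sI_{S^{\ast}} K_C)$ — which follows from the mutual-residual exact sequences being governed by the same numbers — one gets $\cliff(\sI_T K_C) = \cliff(\sI_S K_C) \le 0$.

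Next I would use minimality of the Clifford index of $S$: since $\cliff(\sI_T K_C) = \cliff(\sI_S K_C)$ is minimal among admissible clusters, $T$ also realizes the minimum, so the maximal-degree hypothesis on $S$ applies to $T$ as well: $\deg(T) \le \deg(S)$ (and symmetrically $\deg(T^{\ast}) \le \deg(S)$, but also $\deg(T) + \deg(T^{\ast}) = \deg(S) + \deg(S^{\ast})$ forces $\deg(T^{\ast}) \ge \deg(S^{\ast})$, i.e. $T^{\ast}$ is at least as big as $S^{\ast}$). Now split into cases according to whether $H_0 \in H^0(\sI_S K_C)$ lies in the span of translates, i.e. whether $S^{\ast} = S^{\ast}(H_0)$ equals $T^{\ast} = S^{\ast}(H_1)$ as $H_1$ varies. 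If $h^0(\sI_S K_C) \cdot h^0(\sI_{S^{\ast}} K_C)$-worth of choices of $H_1, H_2$ always produces $T$ with $T \cap T^{\ast} \ne \emptyset$, I want to argue $T^{\ast} \subset T$; the mechanism is that $T$ and $T^{\ast}$ are residual via $H_3$, so $\sI_T \omega_C + \sI_{T^{\ast}} \omega_C$ and the product relation $i \cdot j \in H^0(2K_C)$ for $i \in H^0(\sI_T K_C)$, $j \in H^0(\sI_{T^{\ast}} K_C)$ force, when $T \cap T^{\ast}$ is supported somewhere, an inclusion by maximality — otherwise one could produce a strictly larger admissible cluster of the same minimal Clifford index (the union $U$ from Lemma \ref{sub-canonical}), contradicting maximality of $\deg(S)$ unless $\deg(U) \le \deg(S)$, which pins down $T^{\ast} \subset T$. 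If instead there is a choice making $T \cap T^{\ast} = \emptyset$, then $U = T + T^{\ast}$ has $\deg U = \deg T + \deg T^{\ast} = 2p_a(C) - 2 = \deg K_C$, $U$ is subcanonical of splitting index $\le k$, and computing $h^0(\sI_U K_C)$ via the exact sequence $0 \to \sI_U \omega_C \to \sI_T \omega_C \to \Oh_{T^{\ast}} \to 0$ together with maximality of $\deg(S)$ (so $\deg U \le \deg S$ would be violated unless the Clifford index drops, forcing $\cliff(\sI_S K_C) = 0$). This is where the inequality $\cliff \le 0$ and maximality combine: a larger subcanonical cluster of the same (minimal, $\le 0$) Clifford index would contradict maximality of $\deg(S)$, so the disjoint case can only occur when there is no room, i.e. $\cliff(\sI_S K_C) = 0$.

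The main obstacle I expect is the bookkeeping in the disjoint case: showing cleanly that $T \cap T^{\ast} = \emptyset$ together with maximality of $\deg(S)$ forces $\cliff(\sI_S K_C) = 0$ rather than merely $\le 0$. The delicate point is that $U = T + T^{\ast}$ is a genuinely larger subcanonical cluster (of splitting index $\le k$, by an argument parallel to Remark \ref{splitting_aggiunto}), so by maximality of $\deg(S)$ we must have $\cliff(\sI_U K_C) > \cliff(\sI_S K_C)$; but an exact-sequence computation relates $h^0(\sI_U K_C)$ to $h^0(\sI_T K_C)$ and $h^0(\sI_{T^{\ast}} K_C)$, and plugging in $\deg U = \deg K_C$ gives $\cliff(\sI_U K_C) = 2 - 2 h^0(\sI_U K_C) \le 0$ with the slack exactly $\cliff(\sI_S K_C)$; reconciling these forces the value $0$. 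I would also need to handle carefully the genericity of $H_0, H_1, H_2$ (used so that $S^{\ast}$, $T^{\ast}$, $T$ are honest subcanonical clusters and so that Lemma \ref{generic_splitting} applies), and to check that $H_3$ is indeed generically invertible, which is exactly the content of the computation in Lemma \ref{sub-canonical} (all three of $H_0, H_1, H_2$ generically invertible $\Rightarrow$ $\psi_2(H_1) = \varphi_1(H_2)$ generically invertible).
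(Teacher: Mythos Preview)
Your high-level strategy --- form the union $U = T \cup T^{\ast}$ and the intersection $R = T \cap T^{\ast}$, note they are subcanonical with splitting index $\le k$, and use maximality of $\deg S$ --- is exactly the paper's. But there is a real gap in how you execute it.

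The missing ingredient is the Mayer--Vietoris-type exact sequence
\[
0 \to \sI_{U}\,\omega_C \to \sI_{T}\,\omega_C \oplus \sI_{T^{\ast}}\,\omega_C \to \sI_{R}\,\omega_C \to 0,
\]
which is what the paper actually uses. This single sequence gives
\[
h^0(\sI_T K_C) + h^0(\sI_{T^{\ast}} K_C) \le h^0(\sI_U K_C) + h^0(\sI_R K_C),
\]
and since both $U$ and $R$ lie in $\Sigma_k$, the minimality of the Clifford index applied to \emph{both} yields the reverse inequality, forcing equality throughout. That equality is what tells you $U$ and $R$ themselves attain the minimal Clifford index; only \emph{then} can you invoke maximality of $\deg S = \deg T$ to conclude $\deg U \le \deg T$ (hence $U = T$, i.e.\ $T^{\ast} \subset T$) or else $U$ is trivial, i.e.\ $U = K_C$, $R = 0$, which forces $M = 0$.

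Your proposal never establishes that $U$ has minimal Clifford index. You write that ``one could produce a strictly larger admissible cluster of the same minimal Clifford index (the union $U$ \ldots), contradicting maximality'', but this assumes precisely what needs to be proved. The exact sequence $0 \to \sI_U \omega_C \to \sI_T \omega_C \to \Oh_{T^{\ast}} \to 0$ you invoke in the disjoint case is correct there, but by itself it only bounds $h^0(\sI_U K_C)$ from below by $h^0(\sI_T K_C) - \deg T^{\ast}$, which is not sharp enough; you need the companion bound on $h^0(\sI_R K_C)$ coming from the Mayer--Vietoris sequence to close the argument. In the non-disjoint case your sketch has no mechanism at all for controlling $h^0(\sI_U K_C)$. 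Also, you should not split into cases a priori: the dichotomy $T^{\ast} \subset T$ versus $T \cap T^{\ast} = \emptyset$ is an \emph{output} of the argument (from $U = T$ versus $U = K_C$), not an input.
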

\begin{proof}
Let $\Sigma_k$ be the set of clusters with splitting index smaller than or equal  to $k$.

Notice at first that $\deg T= \deg S$, $h^0(C, \sI_T K_C)= h^0(C, \sI_S K_C)$ and similarly for $S^{\ast}$ and $T^{\ast}$ by Remark \ref{Serre}.

$\cliff(\sI_S K_C)$ is minimal non-positive if and only if $h^0(C, \sI_S K_C) = p_a(C)- \frac{\deg S}{2} + M$ with $M \geq 0$ maximal.

Call $R$ the intersection of the two clusters $T$ and $T^{\ast}$, i.e. the subscheme defined by the ideal $\sI_{T} + \sI_{T^{\ast}}$, and $U$ the minimal cluster containing both, i.e. $\sI_U= \sI_{T} \cap \sI_{T^{\ast}}$. Then  $R$ and $U$ are subcanonical clusters by Lemma \ref{sub-canonical} and they belong
to $\Sigma_k$. Indeed by Proposition \ref{generic_splitting} and Remark \ref{splitting_aggiunto} the splitting indexes of $T$ and $T^{\ast}$ are equal to the one of $S$. Regarding $U$, by Equation (\ref{sezione_unione}) we know that there is a section $H_3 \in H^0(C, K_C)$ vanishing on $U$ such that $H_0 \cdot H_3=H_1 \cdot H_2$. Notice that, since $H_0$ and $H_1$ are generic, $H_3$ can be seen as a deformation of $H_1 \in H^0(C, \sI_{T^{\ast}} K_C)$, thus it is generic too seen as a section of $H^0(C, \sI_{T^{\ast}} K_C)$. Thus the splitting index of $U$ is smaller than or equal  to the splitting index of $T^{\ast}$. With regards to $R$, with a similar argument we can prove that $R^{\ast} \in \Sigma_k$ hence $R \in \Sigma_k$ too.

Moreover,
we have
the following exact sequence:
\[0 \rightarrow \sI_{U} \omega_C \rightarrow \sI_{T} \omega_C \oplus \sI_{T^{\ast}} \omega_C \rightarrow \sI_{R} \omega_C \rightarrow 0\]
Thus we know that \[h^0(C, \sI_{T} K_C) + h^0( C, \sI_{T^{\ast}} K_C) \leq h^0(C, \sI_{R} K_C) + h^0(C, \sI_{U} K_C).\]
By Riemann-Roch and Serre duality the L.H.S.  is equal to $ p_a(C)  + 1 + 2M $, whilst the  R.H.S. is
$\leq  p_a(C) - {\deg U}/{2} + M + p_a(C) - {\deg R}/{2} + M =  p_a(C)  + 1 + 2M $.

By the maximality of the degree of $T$   then one of the following must hold:

\begin{enumerate}\renewcommand\labelenumi{(\roman{enumi})}
\item
 $ U = K_C , \   R= 0$, whence $T \cap T^{\ast} = \emptyset$ and  $M=0$, that is  $\cliff(\sI_T K_C)=0$;
 moreover it is $h^0(C, \sI_{S} K_C) + h^0(C, \sI_{S^{\ast}} K_C)= h^0(C, K_C)+1$.

 \item $U=T,\, R=T^{\ast}$ and  in particular $T^{\ast}\subseteq T$.
 \end{enumerate} \end{proof}\\

\begin{LEM}\label{P generico} Let $C$ be a 2-connected   Gorenstein curve and $S$ be a subcanonical cluster. Assume that there is an irreducible component $\Gamma \subset C$ such that
\begin{eqnarray*}
\dim [ H^0(C, \sI_S K_C)_{|\Gamma}]  \geq 2.
\end{eqnarray*}
Then for a generic $P \in \Gamma$ the cluster $S+P$ is still subcanonical.
\end{LEM}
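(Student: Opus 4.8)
Lemma~\ref{P generico} — plan of proof.

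The plan is to argue by contradiction: suppose that for every point $P\in\Gamma$ the cluster $S+P$ fails to be subcanonical. Recall that $S$ itself is subcanonical, so there is a generically invertible section $s_0\in H^0(C,\sI_S K_C)$; and by hypothesis the restriction $H^0(C,\sI_S K_C)_{|\Gamma}$ has dimension at least~$2$. First I would observe that, $\Gamma$ being an irreducible component of $C$, a section $H\in H^0(C,\sI_S K_C)$ is \emph{not} generically invertible precisely when $H$ vanishes identically on some nonzero subcurve; and if $H$ vanishes on a subcurve but not on $\Gamma$, then the subcurve it vanishes on is contained in $C-\Gamma$, so that $H$ still restricts to a nonzero section on $\Gamma$. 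The point of the argument will be to compare the locus of sections that vanish on a subcurve containing $\Gamma$ with the locus of those that do not.

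The key step is the following. If $S+P$ is not subcanonical, then every section of $H^0(C,\sI_{S+P}K_C)$ vanishes on some subcurve; since $\Gamma$ is irreducible and the value at a \emph{generic} $P\in\Gamma$ is being considered, one shows (using that there are only finitely many subcurves) that there is a single decomposition $C=A_P+B_P$, and in fact — letting $P$ vary — a fixed subcurve, such that every $H\in H^0(C,\sI_S K_C)$ vanishing at a generic point of $\Gamma$ must vanish on a subcurve containing $\Gamma$. But by the dimension hypothesis $\dim H^0(C,\sI_S K_C)_{|\Gamma}\ge 2$, so the subspace of sections vanishing at one generic point $P\in\Gamma$ still restricts nontrivially to $\Gamma$, hence is \emph{not} contained in the space of sections vanishing identically on $\Gamma$; this gives the sought contradiction. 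Concretely, I would let $W=H^0(C,\sI_S K_C)$, let $W_\Gamma\subset W$ be the subspace of sections vanishing on the whole of $\Gamma$ (equivalently, by automatic adjunction, coming from $H^0(C-\Gamma,\cdots)$), and note $\dim W/W_\Gamma\ge 2$; for generic $P\in\Gamma$ the evaluation-at-$P$ functional on $W/W_\Gamma$ is nonzero, so $\{H\in W: H(P)\in\sI_{S+P}\}$ surjects onto a hyperplane of $W/W_\Gamma$, which is nonzero, so it contains a section not vanishing on $\Gamma$. Such a section, not vanishing on $\Gamma$ and on $S+P$, together with the fact that $s_0$ (or a suitable modification) handles the components in $C-\Gamma$, produces a generically invertible section in $H^0(C,\sI_{S+P}K_C)$.

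The main obstacle I anticipate is the last implication: having a section of $H^0(C,\sI_{S+P}K_C)$ that does not vanish on $\Gamma$ is not by itself enough to conclude $S+P$ is subcanonical, because that section could still vanish on some subcurve of $C-\Gamma$. To close this gap I would use $2$-connectedness of $C$ together with the residual/adjunction machinery of Section~\ref{subproperties}: roughly, if \emph{every} section of $H^0(C,\sI_{S+P}K_C)$ vanished on a subcurve, one could pass to a maximal such subcurve $B$ and invoke Proposition~\ref{lem:adj} to see that $H^0(C,\sI_{S+P}K_C)\cong H^0(C-B,\sI_{S'}\omega_{C-B})$ for a cluster $S'$ supported off $\Gamma$; but then $H^0(C,\sI_S K_C)$ would also be forced into such a subcurve, contradicting $\dim H^0(C,\sI_S K_C)_{|\Gamma}\ge 2$ since the restriction to $\Gamma$ would be zero. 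So the genuine content is a bookkeeping argument showing that imposing one extra general point on $\Gamma$ cannot force \emph{all} sections off a subcurve when the restriction to $\Gamma$ was at least two-dimensional; the $2$-connectedness hypothesis is what prevents pathological subcurves of $C-\Gamma$ from spoiling generic invertibility.
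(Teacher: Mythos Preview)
Your overall strategy---argue by contradiction, fix a subcurve $B$ on which every section of $H^0(C,\sI_{S+P}K_C)$ vanishes, observe $\Gamma\not\subset B$ because the restriction to $\Gamma$ stays nonzero---matches the paper's. You also correctly isolate the real difficulty: a section of $H^0(C,\sI_{S+P}K_C)$ not vanishing on $\Gamma$ may still vanish on a subcurve of $C-\Gamma$.

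Where the proposal breaks down is your resolution of that difficulty. You write that ``$H^0(C,\sI_S K_C)$ would also be forced into such a subcurve, contradicting $\dim H^0(C,\sI_S K_C)_{|\Gamma}\ge 2$ since the restriction to $\Gamma$ would be zero.'' Neither clause is right. First, $H^0(C,\sI_S K_C)$ is \emph{not} forced to vanish on $B$: it contains the generically invertible $s_0$, whose restriction to $B$ is nonzero. Second, even if sections factored through $C-B$, since $\Gamma\subset C-B$ their restriction to $\Gamma$ need not be zero. So this line of reasoning does not close the gap, and invoking $2$-connectedness here does not help (indeed the paper's argument for this lemma does not use $2$-connectedness in any essential way).

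The paper's actual mechanism is more delicate and worth internalising. The key extra observation is that the image $H^0(C,\sI_S K_C)_{|B}$ is \emph{exactly} one-dimensional: it is nonzero because $S$ is subcanonical, and it has dimension $\le 1$ because the codimension-$\le 1$ subspace $H^0(C,\sI_{S+P}K_C)$ maps to zero on $B$. Thus the kernel $V:=\ker\bigl(H^0(C,\sI_S K_C)\to H^0(B,\cdot)\bigr)\cong H^0(C-B,\sI_1 K_{C-B})$ has codimension exactly one. A short diagram chase (comparing the two rows for $S$ and $S+P$) then shows that every element of $V$ vanishes at $P$. Since $P\in\Gamma$ is generic, $V$ restricts to zero on $\Gamma$; but then $H^0(C,\sI_S K_C)_{|\Gamma}$ factors through the one-dimensional quotient $H^0(C,\sI_S K_C)/V$, contradicting the hypothesis $\dim H^0(C,\sI_S K_C)_{|\Gamma}\ge 2$. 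The point you were missing is that the contradiction comes not from forcing all of $H^0(C,\sI_S K_C)$ onto a subcurve, but from showing that its codimension-one subspace $V$ already dies on $\Gamma$.
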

\begin{proof} We argue by contradiction.

If $S$ is subcanonical but $P+S$ is not, i.e. $H^0(C, \sI_{S+P}K_C)_{|B}=0$ for some subcurve $B \subset C$, (clearly $\Gamma \nsubseteq B$ since
$ H^0(C, \sI_{S+P} K_C)_{|\Gamma} \neq 0$ by our assumption), we consider the following commutative diagram

$$\xymatrix{ H^0(C-B,\sI_{P} \sI_1K_{C-B})\ar@{^{(}->}[d] \ar[r]  & H^0(C, \sI_{S+P}K_C) \ar@{^{(}->}[d] \ar[r] &  H^0(B, \sI_{S+P}K_C)_{|B}=0\\
H^0(C-B,\sI_1 K_{C-B})\ar[d] \ar[r]  & H^0(C, \sI_{S}K_C) \ar[d] \ar[r] &  H^0(B, \sI_{S}K_C)_{|B}=\mathbb{K}\\
H^0(P, \Oh_P)\ar[r]^{=} & H^0(P, \Oh_P) &
}$$
where $\sI_1$ is the ideal sheaf on $C-B$ given as the kernel of the map $\sI_S \to (\sI_S)_{|B}$.

By a simple diagram chase the restriction map $H^0(C-B,\sI_{1}K_{C-B}) \to H^0(P, \Oh_P)$ must be zero, hence by genericity of the point $P$ the global restriction map from  $H^0(C-B,\sI_{1}K_{C-B})$ to $\Gamma$ must be zero. This is impossible, since this would imply that the restriction of the global space $H^0(C, \sI_{S}K_C)$ to $\Gamma$ would be at most 1-dimensional, contradicting our assumption.
\end{proof}\\

 The following Lemma generalizes the classical techniques showed by Saint Donat in \cite{sd}.

 \begin{LEM}\label{saintdonat}
Let $C$ be a 2-connected   projective  curve, either reduced with planar singularities or contained in a smooth algebraic surface.

Fix $k \in \Na$ and let $S$ be a nontrivial subcanonical cluster with minimal non-positive Clifford index  and  maximal total degree among the clusters with splitting index smaller  than  or equal to $k$.
Let $S^{\ast}$ be the  residual to    $S$ with respect to a generic hyperplane section~$H$.

Suppose that there is an irreducible component $\Gamma \subset C$ such that
\begin{eqnarray*}
\dim [ H^0(C, \sI_S K_C)_{|\Gamma}]  \geq 2\\
\dim[  H^0(C, \sI_{S^{\ast}} K_C)_{|\Gamma} ] \geq 2
\end{eqnarray*}

Then  $S^{\ast}$ is a length 2 cluster such that $h^0(C, \sI_{S^{\ast}}K_C)= g-1$. In particular $C$ is either honestly hyperelliptic or 3-disconnected.
\end{LEM}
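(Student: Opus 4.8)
The plan is to run a Saint Donat type rank computation on the multiplication map of the two residual linear systems.

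\emph{Set-up.} Since $S$ has minimal non-positive Clifford index among the clusters of splitting index $\le k$, Lemma \ref{bpf} shows that $H^0(C,\sI_S K_C)$ is base point free; by Remark \ref{Serre} and Remark \ref{splitting_aggiunto} the residual $S^{\ast}$ lies in the same stratum with the same Clifford index, so Lemma \ref{bpf} also applies to $H^0(C,\sI_{S^{\ast}}K_C)$. By Lemma \ref{duality} we may assume we are in the branch where $S$ and $S^{\ast}$ are Cartier and disjoint with $\cliff(\sI_S K_C)=\cliff(\sI_{S^{\ast}}K_C)=0$ (the complementary branch is treated separately in the proof of Theorem A); then $\sI_S K_C\cong\Oh_C(K_C-S)$ and $\sI_{S^{\ast}}K_C\cong\Oh_C(S)$ are base point free line bundles whose tensor product is $\omega_C$, each has $h^0\ge 2$ by hypothesis, and $h^0(C,\sI_S K_C)+h^0(C,\sI_{S^{\ast}}K_C)=p_a(C)+1$. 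Finally, by Lemma \ref{P generico} a generic point $P\in\Gamma$ can be adjoined to $S$ (and to $S^{\ast}$) keeping subcanonicity; combined with the maximality of $\deg S$ this gives $h^0(C,\sI_{S+P}K_C)=h^0(C,\sI_S K_C)-1$ and the symmetric statement for $S^{\ast}$, which will be used at the end to pin down the numerical invariants.

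\emph{The rank count.} Consider the multiplication map $\mu\colon H^0(C,\sI_S K_C)\otimes H^0(C,\sI_{S^{\ast}}K_C)\to H^0(C,\omega_C)$. By equation \ref{prodotto} every product is $s_0$ times a section of $K_C$, so the rank of $\mu$ is at most $h^0(C,\omega_C)=p_a(C)$; on the other hand, taking a base point free pencil inside one factor and applying the base point free pencil trick following Saint Donat \cite{sd} (valid for base point free line bundles on the connected Gorenstein curve $C$) gives that the rank of $\mu$ is at least $h^0(C,\sI_S K_C)+h^0(C,\sI_{S^{\ast}}K_C)-1=p_a(C)$. Hence equality holds throughout. (If the global systems are not positive enough to carry out this step directly, one restricts to $\Gamma$, which is integral: by the adjunction sequence $0\to\omega_{C-\Gamma}\to\omega_C\to{\omega_C}_{|\Gamma}\to 0$, by $H^1(C,\omega_C)\cong\Ka$ (Theorem \ref{thm:curve}(i)), and by equation \ref{prodotto}, the same count on $\Gamma$ — with the sections vanishing on $\Gamma$ living on $C-\Gamma$ by Proposition \ref{lem:adj} and controlled there by a Clifford-type estimate, available by induction on the number of components — forces $\Gamma\cdot(C-\Gamma)\le 2$, hence $\Gamma\cdot(C-\Gamma)=2$ by $2$-connectedness.)

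\emph{Conclusion.} It remains to read off the geometry. The equality in the pencil trick gives $h^0(\Oh_C(2S))=2h^0(\Oh_C(S))-1$, and hence (extending the classical Saint Donat analysis to $C$) that the morphism defined by the base point free system $|\Oh_C(S)|$ sends $C$ onto a curve of minimal degree: either it is a double cover of a rational normal curve, so that $C$ is honestly hyperelliptic and $\Oh_C(S)$ is a multiple of the honest $g^1_2$; or it degenerates, producing a decomposition $C=A+B$ with $A\cdot B=2$, so that $C$ is $3$-disconnected. In either case all the inequalities above become equalities, and tracing them backwards, together with the maximality of $\deg S$ (which forces the smaller of $S$ and $S^{\ast}$ to have the least possible degree), pins down $\deg S^{\ast}=2$, whence $h^0(C,\sI_{S^{\ast}}K_C)=p_a(C)-1$; in particular $C$ is honestly hyperelliptic or $3$-disconnected.

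\emph{Main obstacle.} The delicate step is the equality analysis of the base point free pencil trick on a possibly singular (and, in the auxiliary reduction, reducible) curve: one must reprove in the Gorenstein setting that $h^0(2D)=2h^0(D)-1$ forces the associated map to have minimal-degree image, and extract $3$-disconnectedness from the degenerate configurations. A secondary difficulty is keeping the splitting index under control when adjoining or deleting a point, so that the auxiliary clusters remain in the stratum where Lemma \ref{bpf} and the maximality of $\deg S$ are available, and tracing all the equalities carefully enough to obtain the exact value $\deg S^{\ast}=2$.
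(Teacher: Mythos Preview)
Your approach via the multiplication map is genuinely different from the paper's, but it has a real gap at the rank count. The base point free pencil trick does \emph{not} give $\rank\mu\ge h^0(\sI_S K_C)+h^0(\sI_{S^{\ast}}K_C)-1$. Taking a base point free pencil $V\subset H^0(\Oh_C(K_C-S))$ and tensoring with $H^0(\Oh_C(S))$, the trick yields $\rank\ge 2h^0(\Oh_C(S))-h^0(\Oh_C(2S-K_C))$, which is not the bound you claim and does not obviously reach $p_a(C)$. The ``$a+b-1$'' bound you quote is a Hopf-type inequality that requires an additional general position or filtration argument; on a reducible Gorenstein curve this is precisely the hard part, and you have not supplied it. Your parenthetical fallback (restrict to $\Gamma$, control the kernel on $C-\Gamma$ ``by a Clifford-type estimate, available by induction'') is circular, since this very lemma is an ingredient of the Clifford theorem you would be invoking.

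There is also a gap at the start: you simply \emph{assume} you are in the Cartier/disjoint branch of Lemma \ref{duality}, deferring the other branch to Theorem~A. But the lemma must stand on its own, and it is the hypothesis $\dim H^0(\sI_S K_C)_{|\Gamma}\ge 2$ that forces this branch --- pick a generic $P\in\Gamma\setminus S$, use Lemma \ref{P generico} to keep $S+P$ subcanonical, take $S^{\ast}$ through $P$, and conclude $S^{\ast}\not\subset S$ from $P\in S^{\ast}\setminus S$. The paper does exactly this as its Step~1. After that, the paper does \emph{not} run a global multiplication map argument at all: it shows directly that $h^0(\sI_S K_C)=2$ by choosing two generic points $P,Q$ (on $\Gamma$, or on nearby components, or as a length-$2$ cluster in the nonreduced case) and producing residuals $T^{\ast}\ni P,Q$ and $T_1\ni P$, $T_1\not\ni Q$, contradicting the dichotomy of Lemma \ref{duality}. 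Once $h^0(\sI_S K_C)=2$ and $\cliff=0$, the equality $\deg S^{\ast}=2$ and $h^0(\sI_{S^{\ast}}K_C)=p_a(C)-1$ are immediate, and the dichotomy hyperelliptic/3-disconnected follows from Theorem \ref{thm:curve}. Your route, even if the rank count were repaired, would still owe a proof that equality in the pencil trick on a singular, possibly reducible $C$ forces a minimal-degree image and hence $\deg S^{\ast}=2$; you flag this yourself as the ``main obstacle'', and it is not bypassed.
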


\begin{proof}
We divide the proof in 4 steps.

Let $\Sigma_k$ be the set of clusters with splitting index smaller than or equal to $k$. By Remark \ref{splitting_aggiunto} we know that $S^{\ast} \in \Sigma_k$.

Notice that since $C$ is 2-connected then  $2\leq \deg(S) \leq \deg(K_C) -2 $.

\hfill\break
\textbf{Step 1: $\mathbf{S}$ and $\mathbf{S^{\ast}}$ are Cartier divisor and non splitting.}

Consider a generic point $P \in \Gamma$. In particular $P \notin S$. By Lemma \ref{P generico} $P+S$ is subcanonical and by the minimality of the Clifford index  $h^0(C, \sI_P \sI_S K_C)= h^0(C,\sI_S K_C) - 1$.

Consider  a generically invertible section $H$ in $H^0(C,\sI_S K_C)$ vanishing at $P$ and the residual $S^{\ast}$ with respect to $H$. We have $P \in S^{\ast}$ and we can apply Lemma \ref{duality} because $P$ is general, hence the corresponding invertible section is general as well. Since $S^{\ast} \not\subset S$ we have  $S^{\ast} \cap S= \emptyset$ and both are Cartier divisors.

$S$ and $S^{\ast}$  Cartier with minimal Clifford indexes  among the clusters in $\Sigma_k$ implies that  both the linear systems $|K_C(-S)|$ and $|K_C(- S^{\ast})|$ are base point free by Lemma \ref{bpf}. Hence we can find a divisor $S^{\ast} \in |K_C(-S)|$ not passing through the singular locus of $C_{\operatorname{red}}$.
This implies that  the splitting index of $S^{\ast}$ is zero by Proposition \ref{generic_splitting} and Remark \ref{splitting_aggiunto} shows that the splitting index of $S$ is zero as well.

\hfill\break
\textbf{Step 2:  $\mathbf{h^0(C, \sI_S K_C)_{|D} \leq h^0(C, \sI_{S^{\ast}} K_C)_{|D}}$ for any $\mathbf{D \subset C}$.}

Consider again a generic point $P \in \Gamma$, $P \notin S$ and $P \notin S^{\ast}$. With the same argument adopted in step 1, we take a cluster $S_1^{\ast}$  residual to   $S$ such that $ P \in S_1^{\ast}$ and a secon cluster $S_2$  residual to    $S^{\ast}$ such that $P \in S_2$. By Lemma \ref{duality} $S_1^{\ast} \subset S_2$ since their intersection contains $P$. This gives us the following inequality for every subcurve $D \subset C$:
\begin{equation}
\begin{array}{rl}\label{dis_aggiunto}\dim  [ H^0(C, \sI_{S^{\ast}} K_C)_{|D}]  = & \dim [ H^0(C, \sI_{S_1^{\ast}} K_C)_{|D}]  \geq
 \dim [ H^0(C, \sI_{S_2} K_C)_{|D} ]  \\  = &  \dim [ H^0(C, \sI_{S} K_C)_{|D} ]\end{array}
\end{equation}
\hfill\break
\textbf{Step 3: $\mathbf{h^0(C, \sI_{S} K_C) = 2}$.}

We argue by contradiction, assuming that $h^0(C, \sI_{S} K_C) \geq 3$. \\
\textit{Case (a):}
$$ \exists  \mbox{ irreducible  } \Gamma \subset C  \mbox{ s. t.  }\dim [ H^0(C, \sI_S K_C)_{|\Gamma}] \geq 3.$$

We may apply Lemma \ref{P generico}  twice to conclude that, given 2 generic points $P$ and $Q$ in $\Gamma$, the cluster $P+Q+S$ is subcanonical and the points impose independent conditions  to $H^0(C, \sI_S K_C)$. Hence there exists a generically invertible $H \in H^0(C, \sI_{S} K_C)$ passing through $P + Q$.
 Consider $T^{\ast}$, the  residual to    $S$ with respect to $H$: $P+Q \subset T^{\ast}$.

Step 2 allows us to apply Lemma \ref{P generico} to the cluster $S^{\ast}$ as well, hence $P+ S^{\ast}$ is subcanonical and $P$ and $Q$ impose independent conditions to $H^0(C, \sI_{S^{\ast}} K_C)$. Hence there exists a generically invertible section $H_1 \in H^0(C, \sI_P \sI_{S^{\ast}}K_C)$ but $H_1 \notin H^0(C,\sI_Q \sI_P \sI_{S^{\ast}}K_C)$. Let $T_1$ be the  residual to   $S^{\ast}$ with respect to this section. We have that $P \in T_1$ but $Q \notin T_1$.

This is impossible: $P \in T_1 \cap T^{\ast}$ but $Q \in T^{\ast}$, $Q \notin T_1$. Thus $ \emptyset \neq T_1 \cap T^{\ast} \subsetneq T^{\ast}$ contradicting Lemma \ref{duality}.

Hence this case can not happen, that is, for every irreducible component $\Gamma$  the restriction  of $H^0(C, \sI_S K_C)$ to $\Gamma$ is at most 2-dimensional.\\
\textit{Case b:}
$$\left\{ \begin{array}{ll} \dim [ H^0(C, \sI_S K_C)_{|C_{\operatorname{red}}}] \geq 3 & \\
\dim [ H^0(C, \sI_S K_C)_{|\Gamma_0}] \leq 2 &  \text{ for every irreducible } \Gamma_0 \subset C  \end{array}\right.$$

We want to argue as in case (a)  finding  two points $P$ and $Q$ which lead to the same contradiction.

Since case (a) can not happen, we know that $\dim [ H^0(C, \sI_S K_C)_{|\Gamma}]=2$, hence there must exist a topologically connected reduced subcurve $D \supset \Gamma$, minimal up to inclusion, such that
$$\dim [ H^0(C, \sI_S K_C)_{|D}] \geq 3.$$
By minimality of $D$, there exists an irreducible component $\Gamma_1 \subset D$, with $\Gamma_1 \neq \Gamma$, and a section $H_0 \in H^0(C, \sI_S K_C)$ such that ${H_0}_{|\Gamma_1} \neq 0$ while ${H_0}_{|D-\Gamma_1}=0$. In particular $H_0$ vanishes on $\Gamma_1 \cap (D-\Gamma_1)$.

We consider a generic point $P \in \Gamma$. Thanks to Lemma \ref{P generico} and Step 1, there exists a generically invertible section $H \in H^0(C, \sI_{S+P} K_C)$ not vanishing on any singular point of $C_{\red}$.

Hence we know that the sections $H$ and $H_0$ span a 2-dimensional subspace of $H^0(C, \sI_{S+P} K_C)_{|\Gamma_1}$. We apply Lemma \ref{P generico} to $\Gamma_1$ taking a point $Q$ generic in $\Gamma_1$ such that $S+P+Q$ is subcanonical and $P$ and $Q$ impose independent conditions on $H^0(C, \sI_S K_C)$.

We may conclude as in case (a) that this case can not happen.\\

\textit{Case c:}
$$\left\{ \begin{array}{l}\dim [ H^0(C, \sI_S K_C)_{|C_{\operatorname{red}}}] =2 \\
\dim [ H^0(C, \sI_S K_C)] \geq 3 \end{array}\right.$$

Consider a generic point $P \in \Gamma$. By Lemma \ref{P generico} $S+P$ is subcanonical, and by genericity  of $P$
$$ H^0(C, \sI_{S+P} K_C)_{|C_{\operatorname{red}}}=<H>$$
where $H$ is generically invertible and does not vanish on any singular point of $C_{\red}$. In particular $P+S$ is non splitting.

We want to show that $(P + S)_{|C_{\red}}= K_{C|C_{\red}}$. If not  there would exists a point $Q$ in $C_{\red}$ not imposing any condition on $H^0(C, \sI_S K_C)$, i.e. the unique nonzero section $H \in H^0(C, \sI_{S+P} K_C)_{|C_{\operatorname{red}}}$ would vanish at $Q$. In particular $S+P+Q$ would be subcanonical, since the section $H$ must be generically invertible.
But, our assumptions are that $S$ has maximal degree among the non splitting nontrivial cluster of minimal Clifford index. Therefore, since $P+Q+S \neq K_C$ (otherwise $\dim [ H^0(C, \sI_S K_C)] \leq 2$), we should have
$$\operatorname{Cliff}(\sI_{S+P+Q} K_C) >\operatorname{Cliff}(\sI_S K_C)$$
which is equivalent to
$$h^0(C, \sI_{S+P+Q} K_C) <h^0(C, \sI_{S} K_C)-1$$
contradicting our hypotheses.

 Thus $(P + S)_{|C_{\red}}= K_{C|C_{\red}}$ and we can argue as in Step 1  taking a cluster $S_1^{\ast}$  residual to   $S$ with respect to a generic section and passing through $P$. Hence ${S_1^{\ast}}_{|C_{\red}}=P$ and   the multiplicity of $\Gamma$ in $C$ is at least 2 since $\deg S_1^{\ast} > 1 $.

In this case we consider a generic length 2 cluster $\sigma_0$ supported at $P$. Since $S$ and $S^{\ast}$ are Cartier and supported on smooth points of $C_{\red}$, it is easy to check by semicontinuity that $\sigma_0$ imposes independent conditions on $H^0(C, \sI_S K_C)$ and $H^0(C, \sI_{S^{\ast}} K_C)$, and we can treat $\sigma_0$ as we did with the length 2 cluster $P+Q$ in the previous case, that  is, we take  $T_1$ and $ T^{\ast}$ such that $P \in T_1 \cap T^{\ast}  $ but $\sigma_0 \not\subset T_1 \cap T^{\ast}$. By Lemma \ref{duality} this is a contradiction.

Hence we are allowed to conclude that
$$\dim [ H^0(C, \sI_S K_C)] =2.$$

\hfill\break
\textbf{Step 4: $\mathbf{\operatorname{\textbf{deg}} S^{\ast}=2}$ and $\mathbf{h^0(C, \sI_{S^{\ast}}K_C)=p_a(C)-1}$.}

By our assumptions and Step 3
$$0 \geq \operatorname{Cliff}(\sI_S K_C)=\deg(\sI_S K_C) -2h^0(C, \sI_S K_C)+2= \deg(\sI_S K_C)-2$$
which implies that
$$\deg S^{\ast}= \deg(\sI_S K_C) \leq 2.$$
But  if $\deg S^{\ast} = 1$   then   the point $S^{\ast}$ would be a base point for $K_C$,
which is absurd by  Theorem \ref{thm:curve} since 
 $C$ is 2-connected and has genus al least 2 since $p_a(C)=h^0(C, K_C) \geq \dim [H^0(C, \sI_S K_C)_{|\Gamma}] \geq~2$.

Finally, Riemann-Roch Theorem and Serre duality implies that
$$h^0(C, \sI_{S^{\ast}}K_C)=p_a(C)-1$$
hence $S^{\ast}$  is a length 2 cluster not imposing independent condition on $K_C$. This happens if and only if $C$ is honestly hyperelliptic or $C$ is 3-disconnected.
\end{proof}\\

The following three technical Lemmas will be used in the proof of Theorem \ref{cliffordfinale} in order to give estimates for the rank of the restriction map $r\!: H^0(C, \sI_S K_C) \to H^0(B, \sI_S K_C) $ for some particular subcurves $B \subset C$.

\begin{LEM}\label{rank1} Let $C$ be a 2-connected curve contained in a smooth algebraic surface and $S$ a non trivial subcanonical cluster with
minimal Clifford index among the clusters with splitting index smaller  than or equal  to $k \in \Na$.

If there is an irreducible component $\Gamma$  and  a point $P \in \Gamma$ such that $S_{|P}$ is not contained in $C_{\red}$, then the restriction map
$H^0(C, \sI_SK_C) \to H^0(m\Gamma, \sI_SK_C) $ has rank 1, where $m$ is the minimal integer
such that $ S_{|P} \subset m\Gamma$. 
\end{LEM}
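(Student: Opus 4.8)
The plan is to argue by contradiction. Since $S$ is subcanonical, $H^0(C,\sI_S K_C)$ contains a generically invertible section $s_0$, and its image in $H^0(m\Gamma,\sI_S K_C)$ is non-zero; so the restriction map $\rho$ always has rank $\ge 1$, and the task is to show that no $s_1\in H^0(C,\sI_S K_C)$ has $s_1|_{m\Gamma}$ and $s_0|_{m\Gamma}$ linearly independent. First I would work locally at $P$: since $C\subset X$ with $X$ a smooth surface, choose $x\in\Oh_{X,P}$ a local equation of $\Gamma$; the minimality of $m$ says exactly that $x^m\in\sI_{S,P}$ while $x^{m-1}\notin\sI_{S,P}$, so that at $P$ the cluster $S$ is ``as deep as possible'' in $m\Gamma$ in the direction transverse to $\Gamma$. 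Viewing $\sI_S K_C\subseteq\omega_C$, a section of $\sI_S K_C$ restricts on $m\Gamma$ to a section of $\omega_C|_{m\Gamma}$ vanishing on the scheme–theoretic intersection $S':=S\cap m\Gamma$, which meets $\Gamma$ at $P$ and has full colength $\deg_P S$ there; hence $\im\rho\subseteq H^0(m\Gamma,\sI_{S'}\omega_C|_{m\Gamma})$.

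Next I would filter $m\Gamma$ by $\Gamma\subset 2\Gamma\subset\cdots\subset m\Gamma$ and analyse $\rho$ layer by layer. The kernel of $\sI_{S'}\omega_C|_{j\Gamma}\to\sI_{S'}\omega_C|_{(j-1)\Gamma}$ is a rank one torsion free sheaf $\mathcal N_j$ on the integral curve $\Gamma$ which agrees, away from $\Supp(S')$, with the line bundle $\omega_C|_\Gamma\otimes\Oh_\Gamma(-(j-1)\Gamma)$; the crucial consequence of the choice of $m$ is that $\mathcal N_m$ is strictly less positive than this line bundle near $P$, the drop at $P$ being the length of $((x^{m-1})+\sI_{S,P})/\sI_{S,P}$, which is $\ge 1$ precisely because $x^{m-1}\notin\sI_{S,P}$. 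The exact sequences $0\to H^0(\Gamma,\mathcal N_j)\to H^0(j\Gamma,\sI_{S'}\omega_C|_{j\Gamma})\to H^0((j-1)\Gamma,\sI_{S'}\omega_C|_{(j-1)\Gamma})$ bound the jump, from $(j-1)\Gamma$ to $j\Gamma$, of the rank of the restriction from $C$ by $h^0(\Gamma,\mathcal N_j)$; I would estimate each $h^0(\Gamma,\mathcal N_j)$ by Clifford's theorem on the integral curve $\Gamma$, using $\Gamma\cdot(C-\Gamma)\ge 2$ (as $C$ is $2$-connected) to control $\deg\mathcal N_j$, the point being that the degree drop forced by the deepness of $S$ at $P$ makes these contributions collapse. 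In this way the problem is reduced to showing that the restriction $H^0(C,\sI_S K_C)\to H^0(\Gamma,\sI_S K_C|_\Gamma)$ to the reduced component $\Gamma$ already has rank $\le 1$.

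For this last step I would exploit the extremality of $S$. If the rank to $\Gamma$ were $\ge 2$, then Lemma \ref{P generico} applies, so a generic $P'\in\Gamma$ can be added to $S$ keeping it subcanonical, and by Lemma \ref{bpf} this imposes an independent condition on $H^0(C,\sI_S K_C)$. Choosing a generic section of $\sI_S K_C$ vanishing at such a $P'$ and passing to the residual as in Lemma \ref{duality}, one obtains the dichotomy $S^{\ast}\subseteq S$ or ``$S,S^{\ast}$ disjoint Cartier and $\cliff(\sI_S K_C)=0$''; tracing the local picture at $P$ through this dichotomy — and using Remark \ref{splitting_aggiunto} together with Proposition \ref{generic_splitting} to guarantee that the auxiliary clusters remain in the stratum $\Sigma_k$ — should contradict either the minimality of $\cliff(\sI_S K_C)$ among clusters of splitting index $\le k$, or the very choice of $m$. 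I expect the main obstacle to be exactly this passage from the purely local deepness of $S$ at the single point $P$ to a global rank statement on the thickened, possibly singular and possibly positive–genus curves $j\Gamma$: making the Clifford estimate on $\Gamma$ effective, correctly accounting for the torsion of $\sI_S K_C\otimes\Oh_{j\Gamma}$ in the filtration, and checking the splitting–index bookkeeping for the clusters produced along the way.
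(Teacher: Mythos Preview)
Your filtration strategy has a real gap. The inequality you state,
\[
\rank\rho_{j}\le \rank\rho_{j-1}+h^0(\Gamma,\mathcal N_j),
\]
is correct, but to conclude $\rank\rho_m=1$ you would need every intermediate jump to vanish, i.e.\ $h^0(\Gamma,\mathcal N_j)=0$ for all $j=2,\dots,m$. The ``degree drop forced by the deepness of $S$ at $P$'' that you isolate only affects the top layer $\mathcal N_m$: it comes from $x^{m-1}\notin\sI_{S,P}$, which says nothing about the intermediate $\mathcal N_j$. Those sheaves are, away from $\Supp S$, the line bundles $\omega_C|_\Gamma(-(j-1)\Gamma)$; their degrees involve $\Gamma^2$, which is completely uncontrolled, and Clifford on the integral curve $\Gamma$ gives at best $h^0\le \deg/2+1$, not zero. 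So the layer-by-layer count cannot collapse to $1$ without further input, and the hypotheses of the lemma supply none. The final reduction step is also shaky: you invoke Lemma~\ref{duality}, but that lemma assumes \emph{maximal degree} among minimal-Clifford-index clusters, which is not part of the hypotheses here, and the sketch of how the dichotomy would contradict the choice of $m$ is left entirely open.

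The paper's argument is of a quite different nature and does not attempt to bound the target of $\rho$ at all. It first uses Lemma~\ref{bpf} to write $\sI_{S,P}$ as generated by sections of $\sI_S K_C$, then takes a colength-$1$ subcluster $\hat S\subset S$ obtained by enlarging the ideal at $P$ by an element $H_\infty$ lying in $\sI_{(m-1)\Gamma,P}$. The minimality of the Clifford index (applied to the residual via Lemma~\ref{bpf}) forces $H^0(C,\sI_S K_C)=H^0(C,\sI_{\hat S}K_C)$. One then deforms $S$ inside $\hat S$ to a one-parameter family $\{S_\lambda\}$ with $\sI_{S_\lambda,P}$ generated by $H+\lambda H_\infty$ together with the remaining local generators; a length/semicontinuity argument shows that the codimension-$1$ subspaces $H^0(C,\sI_{S_\lambda}K_C)\subsetneq H^0(C,\sI_{\hat S}K_C)$ are all equal for $\lambda\neq 0$. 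Any section in this common hyperplane therefore vanishes on $\bigcup_\lambda S_\lambda$, which has infinite length at $P$, hence vanishes on $m\Gamma$. Thus $H^0(C,\sI_S K_C)$ is spanned by one generically invertible section $H$ together with sections vanishing on $m\Gamma$, giving $\rank\rho=1$. The extremality of $S$ is used once, globally, to produce the equality $H^0(\sI_S K_C)=H^0(\sI_{\hat S}K_C)$; there is no cohomological estimate on $\Gamma$ at all.
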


\begin{proof} Let $S$ be a non trivial subcanonical cluster with minimal Clifford index and let $P\in C$ be a point such that $S_{|P}$ is not contained in $C_{\red}$.

Let $\Oh_{C, P} $ be the local ring of $C$ at $P$,  $\mathcal{N}$ be the maximal ideal of $\Oh_{C,P}$ and
$\sM$ be the maximal ideal of ${\Oh_C}_{{\red},P}$.

Thanks to Lemma \ref{bpf}, locally at $P$  the ideal ${\sI_{S}}_{|P} \subset  \Oh_{C, P} $  can be written as
$$ {\sI_{S}}_{|P}= ( H, H_1,\cdots,H_k, p_1,\cdots, p_l ) $$
where
$H, H_1,\cdots, H_k,p_1,\cdots ,  p_l$
 are linearly independent sections in  $H^0(C, \sI_{S} {K_C}) $.

 Moreover we ask  $H, H_1,\cdots, H_k,$ to be of
 minimal degree when restricted to  $S_{\red}$ whereas $p_1,\cdots, p_l$ must have degree strictly bigger. Algebraically,  if
 ${\sI_{S_{\red}}}_{|P} \subset \sM^n$  but $ {\sI_{S_{\red}}}_{|P} \not\subset \sM^{n+1}$, then we ask
 $H, H_1,\cdots, H_k$ to be   a basis of the $\Ka$-vector space $\frac{\sI_{S_{\red}}}{\sI_{S_{\red}}\cap\sM^{n+1}}$
and   $p_1,\cdots,  p_l$    to satisfy $ {p_i}_{| C_{\red}} \in \sM^{n+1}$.

Let us
 consider a subcluster $\hat{S}\subset S$ of colength =1, such that  $\hat{S} \neq S$ precisely at $P$. In particular we ask the ideal $\sI_{\hat{S}}$
to coincide with  $(\sI_{S}, H_{\infty})$, where $H_{\infty} \in {\sI_{(m-1)\Gamma}}_{|P}$.

Define now a 1-dimensional family $\{S_{\lambda}\}$ of clusters, each of them given locally  at $P$ by the ideal
$$\sI_{S_{\lambda}} =   (H+ \lambda H_{\infty}, H_1,\cdots,H_k,  p_1, \ldots, p_l)$$
and coinciding with $\hat{S}$ elsewhere. By construction every $S_{\lambda}$  contains $\hat{S}$
and  we have  $H \not\in H^0(C, \sI_{S_{\lambda}} {K_C})$, which implies
$H^0(C, \sI_{S_{\lambda}} {K_C}) \subsetneq H^0(C, \sI_{\hat{S}} {K_C})$ for every $\lambda \neq 0$.  Indeed, if  locally $H \in {\sI_{S_{\lambda}}}_{|P}$, there would exist elements $\alpha, \alpha_i, \beta_i \in \Oh_{C,P}$ such that
$$H= \alpha (H + \lambda H_{\infty}) + \sum \alpha_i H_i + \sum \beta_i p_i. $$
Since $\{H, H_1, \ldots, H_k\}$ represents a basis for the $\Ka$-vector space $\frac{\sI_{S_{\red}}}{\sI_{S_{\red}}\cap\sM^{n+1}}$, we should  have
$\alpha  \cong \ 1  \ \ mod \mathcal{N}$, the maximal ideal of $\Oh_{C,P}$. In particular $\alpha$ should be  invertible in $\Oh_{C,P}$ and, since $\lambda \in \C^*$, the above equation should imply $$H_{\infty} \in ( H, H_1,\cdots,H_k, p_1,\cdots p_l)= {\sI_{S}}_{|P},$$
i.e., $  {\sI_{\hat{S}}}_{|P} \iso  {\sI_{S}}_{|P}$,  which is impossible by construction of $H_{\infty}$.

On the contrary, since $\cliff{\sI_S K_C}$ is minimal,  it is  $H^0(C, \sI_SK_C) = H^0(C, \sI_{\hat{S}}K_C)$  by our numerical assumptions.
Indeed, let us consider the  residual to    $S$, respectively $\hat{S}$,  with respect to a section in  $H^0(C, \sI_SK_C)$.
 We have $S^{\ast} \subset \hat{S}^{\ast}$ and we know that $S^{\ast}$ satisfies the assumptions of Lemma \ref{bpf} since $S$ does. Hence $h^0(C, \sI_{\hat{S}^{\ast}}K_C) < h^0(C, \sI_{S^{\ast}}K_C)$ and in particular  $h^0(C, \sI_SK_C) = h^0(C, \sI_{\hat{S}}K_C)$ by Riemann-Roch Theorem and Serre duality for residual clusters.

To conclude the proof
we are going to show that this vector space is spanned by
$H$ and a codimension 1 subspace given by sections vanishing on $m\Gamma$.

Our claim is that for every $\lambda\neq 0$  every section in $H^0(C, \sI_{S_{\lambda}} {K_C}) $ vanishes on the curve $m\Gamma$.

Fix a cluster $S_{\lambda}$,  let
$\sigma \in H^0(C, \sI_{S_{\lambda}} {K_C}) $ and consider a generic $S_{\mu}$.
Since both $H^0(C, \sI_{S_{\lambda}} {K_C}) $ and $H^0(C, \sI_{S_{\mu}} {K_C}) $ are codimension 1 subspaces of the same vector space
then there exists a linear combination $\sigma + b_{\mu} H \in H^0(C, \sI_{S_{\mu}} {K_C}) $.

Localizing at $P$, we can write $\sigma= \sum \alpha_i p_i + \alpha (H + \lambda H_{\infty}) +\sum \gamma_i H_i$. Since  $\sigma + b_{\mu} H$
 belongs to $\sI_{S_{\mu}}$ there exists elements $\beta_i,\delta_i$ and $\beta \in \Oh_{C,P}$ such that
$$ \alpha (H + \lambda H_{\infty}) + b_{\mu} H= \sum \beta_i p_i + \beta (H + \mu H_{\infty}) +\sum \delta_i H_i.$$
Both the polynomials are in $\sI_{\hat{S}}$. By the description above, we must have
$$\begin{array}{ll}
\alpha +b_{\mu}= \beta & \operatorname{mod} \sN \\
\alpha\lambda= \beta \mu  &\operatorname{mod} \sN \\
\end{array}$$
where $\mathcal{N}$ as above is the maximal ideal of $\Oh_{C,P}$.
This forces
$$b_{\mu} = \alpha (\operatorname{mod} \mathcal{N}) ( \frac{\lambda}{\mu} - 1).$$

Suppose now that $\alpha \notin \mathcal{N}$.  Then, apart from     $H$,   any element in $\langle \sigma, H\rangle$ should be written as
$a(\sigma+b_{\mu} H) $ for some $\mu$. In particular for $c \neq 0$ every ideal of the form $$ (c \sigma + d H , H_1,\cdots, H_k,   p_1, \ldots, p_l)$$ is contained in some $\sI_{S_{\mu}}$.

This implies that $\length \frac{\Oh_{C,P}}{ (c \sigma + d H , H_1,\ldots, H_k,   p_1, \ldots, p_l)} $ is at least $\length S + 1$ since the ideal vanishes on $S$ and $S_{\mu}$ (since $\sigma \in H^0(C, \sI_{S_{\lambda}} {K_C}) \subset H^0(C, \sI_{S} {K_C})$).

But its degeneration $\frac{\Oh_{C,P}}{  (  H , H_1,\ldots, H_k,   p_1, \ldots, p_l)}= \frac{\Oh_{C,P}}{\sI_{S}} = \Oh_{S} $ has strictly smaller length. This is impossible since the length  is upper semicontinuous.

We must conclude that $\alpha \in \mathcal{N}$ and that $b_{\mu}=0$. This means that the original $\sigma \in H^0(C, \sI_{S_{\lambda}} K_C)$ belongs to $H^0(C, \sI_{S_{\mu}} K_C)$, i.e. $H^0(C, \sI_{S_{\lambda}} K_C)= H^0(C, \sI_{S_{\mu}} K_C)$ for every $\lambda, \mu \in \C^*$.

In particular  every section  in  $H^0(C, \sI_{S_{\lambda}} {K_C})$  must  vanish on every $S_{\mu}$, and in particular
it vanishes on
 the scheme theoretic  union  $\displaystyle{\bigcup_{\mu \in \Ka} S_{\mu}}$ which  has infinite length.
 This may  happen only if  $H^0(C, \sI_{S_{\lambda}} {K_C}))_{| m\Gamma} = \{0\}.$ \end{proof}

\begin{LEM}\label{rank2}
Let $C$ be a   2-connected  projective  curve either reduced with planar singularities or contained in a smooth algebraic surface. Let $B \subset C$ be a subcurve such that the restriction map
$$H^0(C, \sI_{K_{C|B}} K_C) \to H^0(m \Gamma, \Oh_{m \Gamma})$$
has rank 1 for every subcurve $m \Gamma \subset B$.

If $B = \sum_{j=1}^l B_j$ is the decomposition of $B$  in topologically connected component, then the restriction map
$$H^0(C, \sI_{K_{C|B}} K_C) \to H^0(B, \Oh_{B})$$
has rank $\leq l$ (where $l$ is the number of components).
\end{LEM}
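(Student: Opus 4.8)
The plan is to peel off the nilpotents of $\Oh_B$ and then reduce to topological connectedness. Set $\sF:=\sI_{K_{C|B}}K_C$. Since $K_{C|B}$ represents the class of ${K_C}_{|B}$ we have $\sF_{|B}\cong\Oh_B$, hence $\sF_{|m\Gamma}\cong\Oh_{m\Gamma}$ for every subcurve $m\Gamma\subseteq B$, and all the restriction maps appearing in the statement factor through the restriction map $H^0(C,\sF)\to H^0(B,\Oh_B)$. Writing $W\subseteq H^0(B,\Oh_B)$ for the image of this last map, I must prove $\dim W\le l$.

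First I would reduce to a statement about nilpotent sections. Consider the exact sequence $0\to\sN\to\Oh_B\to\Oh_{B_{\red}}\to0$, where $\sN$ is the nilradical sheaf of $\Oh_B$. Since $B_{\red}$ is a reduced projective curve with the same $l$ topologically connected components as $B$, one has $H^0(B_{\red},\Oh_{B_{\red}})\cong\Ka^{l}$; taking cohomology and intersecting with $W$ gives
$$\dim W\le l+\dim\bigl(W\cap H^0(B,\sN)\bigr),$$
so it suffices to show that $W$ contains no nonzero nilpotent section. When $C$ is reduced, $\sN=0$ and we are already done (the rank hypothesis is not even needed).

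In the non-reduced case $C$ lies on a smooth surface, so $B=\sum m_i\Gamma_i$ is an effective Cartier divisor there. Suppose $0\neq v\in W$ is nilpotent. The canonical injection $\Oh_B\hookrightarrow\bigoplus_i\Oh_{m_i\Gamma_i}$ (valid since the $\Gamma_i$ are the distinct components of $B$) forces $v_{|m_{i_0}\Gamma_{i_0}}\neq0$ for some index $i_0$. By hypothesis $W\to H^0(m_{i_0}\Gamma_{i_0},\Oh_{m_{i_0}\Gamma_{i_0}})$ has rank $1$, so its image is the line spanned by $v_{|m_{i_0}\Gamma_{i_0}}$; composing with the restriction $H^0(m_{i_0}\Gamma_{i_0},\Oh)\to H^0(\Gamma_{i_0},\Oh_{\Gamma_{i_0}})$, the image of $W$ in $H^0(\Gamma_{i_0},\Oh_{\Gamma_{i_0}})$ is spanned by $v_{|\Gamma_{i_0}}$, which vanishes because $v$ is nilpotent and $\Gamma_{i_0}$ is reduced. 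Thus $W\to H^0(\Gamma_{i_0},\Oh_{\Gamma_{i_0}})$ is the zero map, contradicting the hypothesis that it has rank $1$. Hence $W$ has no nonzero nilpotent section, $\dim W\le l$, and the lemma follows.

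I do not expect a real obstacle: once one records that $\sF_{|m\Gamma}\cong\Oh_{m\Gamma}$, the argument is formal. The two points needing a line of care are the compatibility of the various restriction maps (all factoring through $H^0(B,\Oh_B)$) and the elementary sheaf injection $\Oh_B\hookrightarrow\bigoplus_i\Oh_{m_i\Gamma_i}$, used only to locate a component on which a putative nilpotent section of $\Oh_B$ survives. An essentially equivalent way to organize the proof is to treat the $l=1$ case first — where the "no nilpotent" claim above combined with $H^0(B,\Oh_B)\cong\Ka$ for connected $B$ gives $\dim W\le1$ — and then obtain the general bound by projecting onto the connected components $B_1,\dots,B_l$ of $B$ and summing.
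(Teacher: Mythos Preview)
Your proof is correct. The paper's own proof is a single sentence --- ``The Lemma follows since the restriction map has rank~$1$ on every topologically connected component'' --- which asserts without justification that rank~$1$ on each $m\Gamma$ forces rank~$1$ on each connected $B_j$; your argument supplies exactly that missing justification, via the claim that the image $W$ contains no nonzero nilpotent section, and then injects $W$ into $H^0(B_{\red},\Oh_{B_{\red}})\cong\Ka^l$. The alternative organization you describe at the end (do $l=1$ first, then sum over components) is precisely the paper's intended route, so the two approaches coincide; yours simply makes the nilpotent-free step explicit, which is the actual content of the lemma in the non-reduced case.
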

\begin{proof} The Lemma follows since the restriction map has rank 1 on every topologically connected component.
\end{proof}

\begin{LEM}\label{rank3}
Let $C$ be a   2-connected  projective  curve either reduced with planar singularities or contained in a smooth algebraic surface. Suppose that $C_{\operatorname{red}}$ is $\mu$-connected. Let $S$ be a subcanonical cluster, and assume that there exists a subcurve $B$ such that $C_{\red} \subset B$ and  the restriction map
$$H^0(C, \sI_{S} K_C) \to H^0(m \Gamma,\sI_{S} K_C)$$
has rank 1 for every subcurve $m \Gamma \subset B$.
Then the following hold.
\begin{enumerate}
\renewcommand\labelenumi{(\roman{enumi})}
\item The restriction map
$H^0(C, \sI_{S} K_C) \to H^0(B, \sI_{S} K_C)$
has rank $k +1 $ (where $k$ is the splitting index of $S$);

\item
If $k>0$ we have
$ \deg K_{C|B} - \deg {S_{|B}} \geq \operatorname{max}\{k;\, \frac{\mu}{2}(k+1)\}.$
\end{enumerate}

\end{LEM}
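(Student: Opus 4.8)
The strategy is to combine the rank-$1$-on-pieces hypothesis with the splitting-index machinery from Section 2.3 and the dimension count of Lemma \ref{precodimspan}. For part (i), the key observation is that the restriction map $H^0(C,\sI_S K_C)\to H^0(B,\sI_S K_C)$ factors through the restrictions to the connected components $B_j$ of $B$, and on each connected $B_j$ the rank is at most $1$ by Lemma \ref{rank2}. So the total rank is at most the number of connected components of $B$. On the other hand, the minimal splitting decomposition $C_{\red}=\sum_{i=0}^k C_i$ of Proposition \ref{generic_splitting} induces a decomposition of $B$ (since $C_{\red}\subset B$), and each generically invertible summand $\overline H_i$ of a generic section produces a nonzero section on the union of those $B_j$ meeting $C_i$; these $k+1$ pieces live on disjoint unions of components, hence are linearly independent in the image. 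To get equality I would argue that, since the image has rank equal to the number of connected components of $B$ and each $C_i$ is itself connected (being indecomposable for generic sections), the minimal splitting decomposition must match the connected-component decomposition of $B$ exactly, so the rank is precisely $k+1$.

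For part (ii), assume $k>0$. The idea is that a generic section $\sigma\in H^0(C,\sI_S K_C)$ restricted to $B$ decomposes as $\sum_{i=0}^k \sigma_i$ with $\Supp(\sigma_i|_{C_{\red}})\subset C_i$, and each $\sigma_i$ vanishes on $C_{\red}\setminus C_i$, in particular on $C_i\cap(B-C_i)$. Translating this into adjunction via Proposition \ref{lem:adj}: the section $\sigma_i$, viewed in $H^0(C,\sI_S K_C)$, gives a nonvanishing map $\sI_S K_C\to\omega_C$ whose annihilator cuts out a subcurve contained in $C_i$ (up to nilpotents), so by the factorization $\sI_S K_C|_{C_i}\to\omega_{C_i}$ is generically onto; this forces $\deg(\sI_S K_C)|_{C_i}\le \deg K_{C_i}=2p_a(C_i)-2$, i.e.\ $\deg K_C|_{C_i}-\deg S|_{C_i}\ge A_i\cdot(C-A_i)\ge\mu$ where $A_i$ is the subcurve of $C$ supported on $C_i$ (using $C_{\red}$ $\mu$-connected and the definition $A_i\cdot B=\deg_{A_i}K_C-(2p_a(A_i)-2)$). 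Summing over the $k+1$ indices would naively give $\deg K_C|_B-\deg S|_B\ge\mu(k+1)$, but the intersection terms $C_i\cap C_j$ get double-counted, which is exactly why the stated bound carries the factor $\tfrac12$: one gets $\deg K_C|_B-\deg S|_B\ge\sum_i\deg_{C_i}(K_C-S)\ge\sum_i(2p_a(C_i)-2-\deg_{C_i}S_{|C_{\red}}\text{-part})$, and reorganizing via $p_a(C_{\red})=\sum p_a(C_i)+\sum_{i<j}C_i\cdot C_j-(k+1)+1$ together with $C_i\cdot(C_{\red}-C_i)\ge\mu$ produces the $\tfrac{\mu}{2}(k+1)$ term. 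The separate bound $\ge k$ comes from the cruder fact that each of the $k+1$ disjoint nonzero sections forces at least one more base-point-free direction, or more directly from applying part (i) of Lemma \ref{precodimspan} to the decomposition; I would derive it as the $\mu=2$ case or from counting that the $k$ "gluing" conditions $C_i\cap C_{i+1}$ each contribute at least $1$ to the base locus.

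The main obstacle I expect is bookkeeping the intersection/gluing contributions correctly so that the factor $\tfrac12$ appears and the bound is tight: one must be careful whether the decomposition $C_{\red}=\sum C_i$ lifts to a decomposition $C=\sum A_i$ of $C$ itself (it does, taking $A_i$ with full multiplicities on components of $C_i$ and zero elsewhere, but then $\sum A_i$ may not equal $C$ if $B\subsetneq C$ — however the hypothesis $C_{\red}\subset B$ and the rank-$1$ condition on all of $B$ should let one work inside $B$ and compare with $K_C|_B$ rather than $K_C$), and whether $\deg_{C_i}S$ should be read on $C_{\red}$ or with multiplicities. The other delicate point is justifying that the splitting pieces $\overline H_i$ of a generic section remain generically invertible on $C_i$ (needed to apply the adjunction argument) — this is where Proposition \ref{generic_splitting}(1), asserting that generic summands cannot be further decomposed, is essential, together with the fact that an indecomposable piece supported on a connected subcurve that is itself $\mu$-connected does not vanish on any sub-subcurve.
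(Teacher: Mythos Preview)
Your plan for part (i) has a genuine gap. You want to bound the rank from above by the number of topologically connected components of $B$, but since $C$ is $2$-connected, $C_{\red}$ is topologically connected, and hence so is $B\supset C_{\red}$. Thus your upper bound would give rank $\le 1$, which contradicts your own lower bound as soon as $k>0$. (Also note Lemma~\ref{rank2} concerns the specific sheaf $\sI_{K_{C|B}}K_C$, not $\sI_S K_C$, so you cannot invoke it here.) The paper's argument is different: first one shows the restriction $H^0(C,\sI_S K_C)_{|B}\to H^0(C,\sI_S K_C)_{|C_{\red}}$ is an isomorphism (otherwise some section would vanish on $\Gamma$ but not on $m\Gamma\subset B$, contradicting the rank-$1$ hypothesis on $m\Gamma$), reducing to $B=C_{\red}$. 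Then for each piece $C_i$ of the splitting decomposition one proves $H^0(C,\sI_S K_C)_{|C_i}=\langle H_i\rangle$ by building $C_i$ up one irreducible component at a time: if the rank jumped at some step $B_{J-1}\subset B_J$, the extra section would force $H_i$ to vanish on $B_{J-1}\cap\Gamma_J$ and hence to decompose further, contradicting the minimality of the splitting. The point is that the $C_i$ are \emph{not} connected components of $C_{\red}$; what makes the restriction rank-$1$ on $C_i$ is the indecomposability of $H_i$, not disconnectedness.

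For part (ii) your intuition about where the $\tfrac{\mu}{2}(k+1)$ comes from is right, but the mechanism is off. A section $\sigma_i\in H^0(C,\sI_S K_C)$ is a map $\Oh_C\to\sI_S K_C$, not a map $\sI_S K_C\to\omega_C$, so automatic adjunction applied to $\sigma_i$ does not give the inequality $\deg(\sI_S K_C)|_{C_i}\le\deg K_{C_i}$ you claim. The paper instead takes a filtration $C_0=D_0\subset D_1\subset\cdots\subset D_k=C_{\red}$ with $D_i-D_{i-1}=C_i$ and looks at the kernel $\sN$ of the restriction $(\sI_S K_C)_{|D_i}\to(\sI_S K_C)_{|D_{i-1}}$. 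Since $H_i$ vanishes on $D_{i-1}$ but not on $C_i$, it gives a nonzero generically invertible section of $\sN$, hence $\deg_{C_i}\sN\ge 0$; comparing with $K_C|_{C_i}(-D_{i-1})$ in the obvious diagram yields $\deg_{C_i}Z\le\deg K_{C|C_i}-C_i\cdot D_{i-1}$. Summing gives both the bound $\ge k$ (using $C_i\cdot D_{i-1}\ge 1$) and the bound $\ge\tfrac12\sum C_i\cdot(C_{\red}-C_i)\ge\tfrac{\mu}{2}(k+1)$. The passage from $C_{\red}$ to general $B$ is then a separate short degree computation.
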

\begin{proof}
Since the restriction map to every $m \Gamma$ has rank one it is generated by the restriction of a generically invertible section $H \in H^0(C, \sI_{S} K_C)$. By genericity we may assume that $H$ verifies the minimum  for the splitting index, i.e. $H=\sum_{i=0}^k H_i$ with $H_i \in H^0(C,\sI_S K_C)$ and there is a maximal decomposition $C_{\red} = \sum_{i=0}^k C_i$ with $\Supp({H_i}_{|C_{\red}})=C_i$  and $H$ can not be further decomposed.

\hfill\break
({\em i})
To prove the first part of the statement notice
that the restriction map
$$   H^0(C, \sI_{S} K_C)_{|B} \to H^0(C, \sI_{S} K_C)_{|C_{\red}}$$
is an isomorphism.
Indeed the above restriction map is obviously  onto.
 It is injective as well, since otherwise there would be a section $\hat{H}$ in $H^0(C, \sI_{S} K_C)$ vanishing on $C_{\red}$ but not on $B$. i.e.  there would be a subcurve $m \Gamma \subset B$ such that $\hat{H}$ vanishes on $\Gamma$ but not on $m \Gamma$. But the rank of the restriction $H^0(C, \sI_{S} K_C) \to H^0(m \Gamma,\sI_{S} K_C)$ is 1 by our assumptions, as well as the rank of $H^0(C, \sI_{S} K_C) \to H^0(\Gamma,\sI_{S} K_C)$, hence the section $\hat{H}$ can not exists.

Thus without loss of generality we can assume
$B= C_{\red}$ and we take the decomposition $C_{\red} = \sum_{i=0}^k C_i$. \\

The first statement follows if we prove that for every $C_i$ it is $H^0(C, \sI_S K_C)_{|C_i}= \langle H_i \rangle$.
For simplicity we are going to prove it for $C_1$.

Write $C_1 = \sum_{j=1}^{J_0} \Gamma_j$, where $ \Gamma_j$'s are the irreducible components.
Notice that $C_1$ is connected, hence 1-connected, since the decomposition of $C$ is maximal.
We are going to prove by induction that there exists a decomposition sequence
$$ \Gamma_1 = B_1 \subset B_2 \subset \cdots \subset B_{J_0}= C_1$$
such that $H^0(C, \sI_S K_C)_{|B_{J}}= \langle H_1 \rangle$ for every $J\leq J_0$.


The first case, $J=1$, follows from our assumptions.
Assume now  it holds for $B_{J-1}$. Since $C_1$ is 1-connected then $B_{J-1}\cap (C_1 - B_{J-1})\neq \emptyset. $
Take $H_1$ and evaluate it  on $B_{J-1}\cap (C_1 - B_{J-1})$. If it is zero, then $H_1$ can be decomposed as the sum of two sections of $H^0(C, \sI_S K_C)_{|C_1}$, one supported on $B_{J-1}$, the other on $C_1 - B_{J-1}$.  But then we may apply Proposition \ref{generic_splitting}, part 1, to conclude that this would force $H_1$, and $H$ as well, to be decomposed as the sum of more sections than allowed.

Hence there exists at least one component, say $\Gamma_J$, such that $H_1$ does not vanish on $B_{J-1}\cap \Gamma_J$. Define $B_J:=B_{J-1}+\Gamma_J$.
Our claim is that  $H^0(C, \sI_S K_C)_{|B_{J}}= \langle H_1 \rangle$.
If not  there would exist $\overline{H} \in H^0(C, \sI_S K_C)_{|D_J}$  linearly independent from $H_1$ such that   $\overline{H}_{|D_{J-1}} =0$ (possibly after a linear combination of sections). Moreover  we would have $\overline{H}_{|\Gamma_{J}}={H_1}_{|\Gamma_{J}}$ up to rescaling by our assumptions, hence $H_1$ must vanish on $D_{J-1}\cap \Gamma_J$, which is absurd.

\hfill\break
({\em ii})
Suppose now that the splitting index $k$ is at least 1. We are going to study $\deg K_C - \deg S$ on $B$.

Assume at first that $B=C_{\red}$. Consider a decomposition sequence
 $C_0=D_0 \subset D_1 \subset \ldots \subset D_k=B=C_{\red}$ where $D_i - D_{i-1}=C_i$. Up to reindexing the subcurve $C_i$ we can suppose that the curves $D_i$ are topologically connected, hence 1-connected since they are reduced.

We prove by induction that $\deg (\sI_S K_C)_{|D_i} \geq i$. For $i=0$ it is obvious. For $i > 0$ consider the commutative diagram
$$\xymatrix{0 \ar[r] & \sN \ar[r]\ar@{^{(}->}[d] & (\sI_S K_C)_{|D_i} \ar[r]^{\pi_i}\ar@{^{(}->}[d]  & (\sI_S K_C)_{|D_{i-1}} \ar[r]\ar@{^{(}->}[d]   & 0\\
0 \ar[r]& {K_C}_{|C_i}(-D_{i-1}) \ar[r] \ar@{->>}[d] & {K_C}_{|D_i} \ar[r]\ar@{->>}[d]& {K_C}_{|D_{i-1}} \ar[r]\ar@{->>}[d] & 0\\
0 \ar[r]&Z \ar[r] & S_{|D_i} \ar[r] & S_{|D_{i-1}} \ar[r] & 0
}$$
where $\sN$ is the kernel of $\pi_i$ and $Z$ a subsheaf of $S_{|D_i}$, both considered as sheaves with support  on  $C_i$. Notice that by our assumptions  the section $H_i$ restricts to a nonzero generically invertible section of $\sN$, thus $\deg_{C_i} \sN \geq 0$. Computing degrees we obtain
\begin{eqnarray}\label{formula}\deg_{C_i} Z& =& \deg {K_C}_{|C_i}(-D_{i-1}) - \deg \sN \nonumber =\deg {K_C}_{|C_i} - C_i \cdot D_{i-1} - \deg \sN\\
&\leq& \deg {K_C}_{|C_i} - C_i\cdot D_{i-1} \leq \deg {K_C}_{|C_i} -1.\end{eqnarray}

But $\deg S_{|D_i}  = \deg S_{|D_{i-1}} + \deg_{C_i} Z$,
and by induction hypothesis we may assume $\deg (\sI_S K_C)_{|D_{i-1}} \geq (i-1)$. Hence
\begin{eqnarray*}\deg (\sI_S K_C)_{|D_i}& =& \deg {K_C}_{|D_i}- \deg S_{|D_i}\\& =&  (\deg {K_C}_{|D_{i-1}}- \deg S_{|D_{i-1}})+ (\deg {K_C}_{|C_i}-\deg_{C_i} Z)
\geq (i-1)+1 = i
\end{eqnarray*}
In particular we have the first inequality we wanted to prove, i.e. $$ \deg K_{C|B} - \deg {S_{|B}} \geq k.$$

Moreover  Equation (\ref{formula})  yields  $\deg K_{C | C_{i}} - \deg S_{|C_i} \geq  C_i\cdot D_{i-1}$. Taking sum over all  $C_i$'s  we obtain
$$\deg K_{C|C_{\red}} - \deg S_{|C_{\red}} \geq \frac 12 \sum_{i=0}^k C_i\cdot(C_{\red} - C_i)$$
thus if the reduced curve $C_{\red}$ is $\mu$-connected we have
$$ \deg K_{C|B} - \deg S_{|B} \geq \frac{\mu}{2} (k+1).$$
We deal now with the case $C_{\red} \subsetneq B$. We just proved that $$\deg (\sI_S K_C)_{|C_{\red}}=\deg K_{C|C_{\red}} - \deg S_{|C_{\red}}\geq \max \{k, \frac{\mu}{2}(k+1)\}.$$
Consider the following diagram, which exists and commute since $S$ is subcanonical:

$$ \xymatrix{\Oh_{B-C_{\red}}(-(C_{\red})) \ar@{^{(}->}[r] \ar[r]\ar@{^{(}->}[d]& \Oh_B \ar@{^{(}->}[d] \ar@{->>}[r]& \Oh_{C_{\red}}\ar@{^{(}->}[d] \\
\ker(\rho) \ar@{^{(}->}[r]& (\sI_S K_C)_{|B} \ar@{->>}[r]^{\rho} &(\sI_S K_C)_{|C_{\red}}}
$$
Computing degrees we may conclude by the following equation

\begin{eqnarray*}
\deg(\sI_S K_C)_{|B}&=& \chi((\sI_S K_C)_{|B})-\chi(\Oh_B)\\&=&  \chi((\sI_S K_C)_{|C_{\red}})+\chi(\ker(\rho))-\chi(\Oh_{C_{\red}})- \chi(\Oh_{B-C_{\red}}(-(C_{\red})))\\
&=& \deg(\sI_S K_C)_{|C_{\red}} +\chi(\ker(\rho)))- \chi(\Oh_{B-C_{\red}}(-(C_{\red})))\\
&\geq & \max \{k, \frac{\mu}{2}(k+1)\}.
\end{eqnarray*}

\end{proof}

Our main result follows from the following theorem.

\begin{TEO}\label{cliffordfinale}
Let $C$ be a projective  curve  either   reduced with planar singularities  or contained in a smooth  algebraic surface. Assume $C$ to be
 2-connected and $C_{\operatorname{red}}$ $\mu$-connected.

Let $S \subset C$ be a  subcanonical cluster of splitting index $k$.
Then
 \begin{equation}\label{k2} h^0(C,\sI_S K_C) \leq p_a(C) - \frac{1}{2} \deg (S) + \frac{k}{2} \end{equation}

The following holds:
 \begin{enumerate}
  \renewcommand\labelenumi{(\roman{enumi})}
\item if $C_{\operatorname{red}}$ is 2-connected then $h^0(C,\sI_S K_C) \leq p_a(C) - \frac{1}{2} \deg (S) + \operatorname{max}\{0,\, \frac{k}{2} - \frac12\}; $
\item if $C_{\operatorname{red}}$ is 3-connected then $h^0(C,\sI_S K_C) \leq p_a(C) - \frac{1}{2} \deg (S) + \operatorname{max}\{0,\, \frac{k}{4} - \frac34\}; $
\item if $C_{\operatorname{red}}$ is 4-connected then \begin{equation}\label{4-conn}h^0(C,\sI_S K_C) \leq p_a(C) - \frac{1}{2} \deg (S). \end{equation}
 \end{enumerate}

 Moreover if equality holds in Equation (\ref{k2}) or in Equation (\ref{4-conn}) then the pair $(S, C)$ satisfies one of the following assumptions:

 \begin{enumerate}
  \renewcommand\labelenumi{(\roman{enumi})}
\item $S= 0,\, {K_C}$ and $k=0$;
\item  $C$ is honestly hyperelliptic, $S$ is a multiple of the honest $g_{2}^{1}$ and $k=0$;
\item $C$ is 3-disconnected (i.e.  there is
a decomposition $C=A+B$ with  $A\cdot B=2$). \\
\end{enumerate}

\end{TEO}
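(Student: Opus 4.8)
By Riemann--Roch and Serre duality (Remark \ref{Serre}), inequality \eqref{k2} is equivalent to $\cliff(\sI_S K_C)\ge -k$, and the three refinements for $C_{\red}$ $\mu$-connected to correspondingly stronger lower bounds on $\cliff(\sI_S K_C)$. The plan is to argue by contradiction. If one of these bounds fails, then after discarding the trivial clusters $S=0,K_C$ (which give equality with $k=0$, i.e.\ case (i)) the set of nontrivial subcanonical clusters of splitting index $\le k$ with non-positive Clifford index is non-empty, and I fix in it an $S$ with $\cliff(\sI_S K_C)$ minimal and, among those, $\deg S$ maximal. Then $S$ is precisely in the situation of Lemmas \ref{duality} and \ref{saintdonat}: taking $S^{\ast},T,T^{\ast}$ the residuals with respect to generic sections as there, Lemma \ref{duality} yields ``$T\cap T^{\ast}=\emptyset$ and $\cliff(\sI_S K_C)=0$'' or ``$T^{\ast}\subseteq T$'', and since a residual has the same degree, $h^0$ and splitting index (Remarks \ref{Serre} and \ref{splitting_aggiunto}) I may replace $S$ by the appropriate cluster and reduce to two cases: \textbf{(A)} $S^{\ast}\subseteq S$; \textbf{(B)} $S$ and $S^{\ast}$ are disjoint Cartier divisors with $\cliff(\sI_S K_C)=0$.

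Next comes the dichotomy coming from Lemma \ref{saintdonat}. In Case (B) one already has $\cliff(\sI_S K_C)=0$, so \eqref{k2} and its refinements hold, and the equality analysis is a generalization of the classical techniques of Saint Donat carried out through Lemma \ref{saintdonat} --- supplemented, when its hypothesis must first be arranged, by the rank estimates of Lemmas \ref{rank1}--\ref{rank3} --- and produces one of the cases (i)--(iii). The same applies in Case (A) whenever some irreducible component $\Gamma\subset C$ satisfies $\dim H^0(C,\sI_S K_C)_{|\Gamma}\ge 2$ and $\dim H^0(C,\sI_{S^{\ast}}K_C)_{|\Gamma}\ge 2$; note that in Case (A) the first inequality forces the second, since $\sI_S\subseteq\sI_{S^{\ast}}$. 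In that event Lemma \ref{saintdonat} gives $\deg S^{\ast}=2$, $h^0(C,\sI_{S^{\ast}}K_C)=p_a(C)-1$, whence $\cliff(\sI_S K_C)=0\ge -k$, and $C$ is honestly hyperelliptic or $3$-disconnected, which on tracing back identifies $(S,C)$ with case (ii) or (iii).

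There remains the main case: Case (A) with $\dim H^0(C,\sI_S K_C)_{|\Gamma}\le 1$ for every irreducible component $\Gamma\subset C$. Here I would construct a subcurve $B$ with $C_{\red}\subseteq B$ such that $H^0(C,\sI_S K_C)\to H^0(m\Gamma,\sI_S K_C)$ has rank $1$ on every thickening $m\Gamma\subseteq B$: on reduced components this is the running assumption, and Lemma \ref{rank1} supplies, for each $P\in\Gamma$ at which $S_{|P}\not\subseteq C_{\red}$, the largest thickening $m\Gamma$ on which the restriction still has rank $1$; one takes $B$ to be $C_{\red}$ together with all these $m\Gamma$. Lemmas \ref{rank2} and \ref{rank3} then give that $H^0(C,\sI_S K_C)\to H^0(B,\sI_S K_C)$ has rank exactly $k+1$, and, when $k>0$, that $\deg K_{C|B}-\deg S_{|B}\ge\max\{k,\tfrac{\mu}{2}(k+1)\}$.

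Finally I would estimate $h^0(C,\sI_S K_C)$ as the rank on $B$ plus $\dim\ker$. With $A=C-B$, the kernel consists of the sections of $\sI_S K_C$ vanishing on $B$, which through the exact sequence $0\to\omega_A\to\omega_C\to{\omega_C}_{|B}\to 0$ injects into $H^0(A,\sI_{S_A}\omega_A)$ for a suitable cluster $S_A$ on $A$, so $\dim\ker\le h^0(A,\omega_A)=p_a(A)-1+h^0(A,\Oh_A)$; since $C$ is $2$-connected, Lemma \ref{precodimspan} gives $h^0(A,\Oh_A)\le\tfrac12\,A\cdot B$. Combining with the genus formula $p_a(C)=p_a(A)+p_a(B)+A\cdot B-1$ of \eqref{genere A+B}, with $\deg S_{|A}\ge 0$, and with the bound $\deg K_{C|B}-\deg S_{|B}\ge k$ of Lemma \ref{rank3}(ii), yields $h^0(C,\sI_S K_C)\le p_a(C)-\tfrac12\deg S+\tfrac{k}{2}$, contradicting the assumption on $S$; substituting $\max\{k,\tfrac{\mu}{2}(k+1)\}$ for $k$ and using Lemma \ref{precodimspan}(iii) produces the refined inequalities for $\mu=2,3,4$. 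At equality every inequality used must be an equality, which forces either $A=\emptyset$ (so $h^0(C,\sI_S K_C)=k+1$ and, after fixing $\deg S^{\ast}$, the pair is case (i) or (ii)) or $A\neq\emptyset$ with $h^0(A,\Oh_A)=\tfrac12\,A\cdot B$, so each connected component of $A$ meets $B$ in a length-$2$ scheme and $C$ is $3$-disconnected, case (iii). The genuine obstacle I anticipate is exactly this last step --- constructing $B$ correctly and keeping the rank estimate of Lemma \ref{rank3} and the adjunction estimate on $A$ compatible with sharp enough constants to recover precisely the three exceptional pairs $(S,C)$ and no others.
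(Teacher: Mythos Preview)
Your overall plan tracks the paper's proof closely: the extremal choice of $S$, the dichotomy from Lemma~\ref{duality}, the Saint-Donat branch via Lemma~\ref{saintdonat}, and the rank-one branch via Lemmas~\ref{rank1}--\ref{rank3} combined with Lemma~\ref{precodimspan}. The construction of $B$ and the kernel estimate are essentially what the paper does (the paper takes $B$ to be the minimal subcurve containing $S$ together with the components on which $K_C$ has degree~$0$, so that $\operatorname{supp}(S)\subseteq B$ and the kernel is exactly $H^0(C-B,\omega_{C-B})$ rather than merely bounded by it; you will want this exactness for the equality analysis).

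There is, however, a genuine gap in your Case~(B). You write that when $S$ and $S^{\ast}$ are disjoint Cartier with $\cliff=0$, the equality analysis goes through Lemma~\ref{saintdonat} ``supplemented, when its hypothesis must first be arranged, by the rank estimates.'' But the hypothesis of Lemma~\ref{saintdonat} \emph{cannot} always be arranged: it may happen that for every irreducible $\Gamma\subset C$ at least one of the restrictions $H^0(C,\sI_S K_C)_{|\Gamma}$, $H^0(C,\sI_{S^{\ast}}K_C)_{|\Gamma}$ has rank~$1$. This is the paper's Case~3, and it requires a separate, substantial argument that you have not outlined. One first uses base-point-freeness (Lemma~\ref{bpf}) to deduce that there is a decomposition $C=A+B$ with no common components, $S=K_{C|B}$ and $S^{\ast}=K_{C|A}$; the condition $\cliff=0$ then becomes the identity
\[
h^0(A,\Oh_A)+h^0(B,\Oh_B)+\rank(r_A)+\rank(r_B)=A\cdot B+2,
\]
and comparing with Lemma~\ref{precodimspan} and the rank bounds of Lemmas~\ref{rank1}--\ref{rank2} forces $C$ to be $4$-disconnected. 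Ruling out the $3$-connected case is not immediate either: the paper peels off a connected component $A_1$ with $A_1\cdot(C-A_1)=3$, passes to the $2$-connected curve $C-A_1$ and the induced subcanonical cluster $\tilde S^{\ast}$, and applies the theorem inductively to reach a contradiction. None of this is captured by ``the rank estimates of Lemmas~\ref{rank1}--\ref{rank3}.'' A minor related point: in your rank-one branch with $A=\emptyset$ (i.e.\ $B=C$) and $k>0$, the conclusion is not case~(ii) but rather a \emph{strict} inequality $M\le\tfrac{k}{2}-\tfrac12$, obtained by observing that $M=\tfrac{k}{2}$ would force $\deg S^{\ast}=k$ and $h^0(C,\sI_{S^{\ast}}K_C)=p_a(C)$, impossible since $K_C$ is ample.
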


\begin{proof}
Fix $k \in \Na$ and let $\Sigma_k$ be the set of clusters with splitting index smaller than  or equal to  $k$.

Then Equation (\ref{k2}) corresponds to say that
$$\cliff(\sI_S K_C):= 2p_a(C) -  \deg(S) - 2\cdot  h^0(\sI_S K_C) \geq -k $$
for every cluster $S \in \Sigma_k$.

If the Clifford index of nontrivial clusters is always positive the claim is trivially true. Suppose then the existence of a nontrivial subcanonical cluster with non-positive Clifford index in $\Sigma_k$.\\

\noindent\textbf{Step 1: Clusters of minimal Clifford index and maximal degree.
}
\\

We are going to prove at first that \textit{the claim is true for a cluster of minimal Clifford index and maximal degree}, more precisely that the required inequalities hold for such a cluster and  if equality holds in  Equation (\ref{k2}) or in Equation (\ref{4-conn}) then the pair $(S, C)$ satisfies one of the condition listed in the statement.\\

Let $S$ be a nontrivial subcanonical cluster in $\Sigma_k$ with minimal Clifford index and maximal total degree. Let $S^{\ast}$ be its residual with respect to a generic hyperplane section~$H$. Without loss of generality we can suppose that the splitting index of $S$ is precisely $k$. We have

\begin{equation}\label{M}h^0(C, \sI_S K_C) = p_a(C)-\frac{\deg S}{2} + M\end{equation}
with $M \geq 0$ maximal in $\Sigma_k$.

By Lemma \ref{duality}  we know that either  $S^{\ast}$ is contained in $S$ or $S$ is  disjoint  from $S^{\ast}$ and $\cliff(\sI_S K_C)=0$; in the second case
$S$ and $S^{\ast}$ are Cartier divisors since they are locally isomorphic to $K_C$. \\

\noindent{\em Case 1:  There exists an irreducible component $\Gamma \subset C$ such that}
$$
\dim [ H^0(C, \sI_S K_C)_{|\Gamma}]  \geq 2 \text{\;\;  and   \;\;}
\dim [ H^0(C, \sI_{S^{\ast}} K_C)_{|\Gamma}] \geq 2.$$

By Lemma \ref{saintdonat} we know that $\deg (S^{\ast} )= 2$, that is, $C$ is 3-disconnected or honestly hyperelliptic and that $h^0(C,\sI_S K_C)= p_a(C)-\frac{\deg S}{2}$.  \\

\noindent{\em Case 2:   $S^{\ast} \subset S$ and the restriction map $H^0(C, \sI_S K_C) \to H^0(\Gamma, \sI_S K_C)$ has rank 1 for every  irreducible $\Gamma\subset C$.}
\\

Let $ B = \sum m_i \Gamma_i $ be the minimal subcurve of $C$ containing $S$ and all the $\Gamma_i' s$ such that $\deg_{\Gamma_i} K_C =0$.

First of all notice that $S \cap \Gamma \neq \emptyset$ for every irreducible component $\Gamma \subset C$ such that $K_{C|\Gamma} \neq 0$ because $ S^{\ast}\subseteq S$.  Thus $C_{\red} \subset B$.

By Lemma \ref{rank1}  the restriction map
$H^0(C, \sI_SK_C) \to H^0(m_i\Gamma_i, \sI_SK_C) $ has rank 1 for every irreducible $  \Gamma_i\subset B$ with multiplicity $m_i > 1$ in $B$.
We apply Lemma \ref{rank3} and  we may conclude that
the restriction map
$H^0(C, \sI_SK_C) \to H^0(B, \sI_SK_C) $ has rank $k+1$.\\

Suppose at first that $B \neq C$ (in particular $C$ is not reduced). Consider the following exact sequence
$$ 0 \to \omega_{C-B} \to \sI_S \omega_C \to \sI_S {\omega_{C}}_{|B} \to 0$$
In particular
$$h^0(C,\sI_S K_C)= h^0(C - B, K_{C-B}) + \dim  \im \{r_B: H^0(C,\sI_S K_C) \to H^0(B,\sI_S K_C )\}.$$
Since the restriction map  $r_B$ has rank $k+1$ then

$$h^0(C,\sI_S K_C)= h^0(C-B, K_{C-B}) + k+1.$$
Equation (\ref{genere A+B}) and Equation (\ref{M}) imply that

$$M= k - (\frac{\deg{K_C}_{|B}}{2} - \frac{\deg S}{2}) - (\frac{B \cdot  (C-B)}{2} - h^0(C-B, \Oh_{C-B})).$$

If $k=0$, i.e. the cluster $S$ is not splitting,
every summands in the above formula
can not be positive  since by Lemma \ref{precodimspan} $\frac{B \cdot  (C-B)}{2} - h^0(C-B, \Oh_{C-B}) \geq 0$. Thus we have $M=0$, $S=K_{C|B}$ and  $\frac{B \cdot  (C-B)}{2} = h^0(C-B, \Oh_{C-B})$ and, still by Lemma \ref{precodimspan} we know that the curve $C$ is not 3-connected.

If $k>0$, assume $C_{\red}$ to be $\mu$-connected but not $(\mu +1)$-connected.  By Lemma \ref{rank3} we know that ${\deg{K_C}_{|B}} - {\deg S}\geq \operatorname{max}\{k,\, \frac{\mu}{2}(k+1)\}$, thus  by Lemma \ref{precodimspan}
\begin{eqnarray*}0 \leq M&\leq& \operatorname{\min}\{ \frac{k}{2},\, (1- \frac{\mu}{4})k - \frac{\mu}{4}\} - (\frac{B \cdot  (C-B)}{2} - h^0(C-B, \Oh_{C-B}))\\ & \leq& \operatorname{\min}\{ \frac{k}{2},\, (1- \frac{\mu}{4})k - \frac{\mu}{4}\} . \end{eqnarray*}
 Since $M$ is nonnegative we have that $\mu \leq 3$.

 If $\mu \geq 2$ then $\operatorname{\min}\{ \frac{k}{2},\, (1- \frac{\mu}{4})k - \frac{\mu}{4}\}=(1- \frac{\mu}{4})k - \frac{\mu}{4}$ and
$$\begin{array}{rcl}h^0(C, \sI_S K_C) & \leq& p_a(C)- \frac{\deg S}{2} +  (1- \frac{\mu}{4})k - \frac{\mu}{4}- (\frac{B \cdot  (C-B)}{2} - h^0(C-B, \Oh_{C-B}))\\
& \leq& p_a(C)- \frac{\deg S}{2} +  (1- \frac{\mu}{4})k - \frac{\mu}{4}\end{array}$$
and if equality holds then $C$ is 3-disconnected thanks to Lemma \ref{precodimspan}.
If $C_{\red}$ is 2-disconnected, i.e. $\mu =1$, we know that $\operatorname{\min}\{ \frac{k}{2},\, (1- \frac{\mu}{4})k - \frac{\mu}{4}\}=\frac k2$ and
$$
\begin{array}{rcl}
h^0(C, \sI_S K_C)  & \leq&  p_a(C)- \frac{\deg S}{2} +  \frac k2 - (\frac{B \cdot  (C-B)}{2} - h^0(C-B, \Oh_{C-B})) \\
& \leq& p_a(C)- \frac{\deg S}{2} +  \frac k2 \end{array}$$
and if equality holds then $C$ is 3-disconnected.\\

We have still to study the case in which $B=C$. With the same argument we have
$$M= k - (\frac{\deg{K_C}}{2} - \frac{\deg S}{2}).$$
We can argue as before: if $k=0$ and $M \geq 0$ we have $S= K_C$, which is impossible since we asked $S$ to be nontrivial. If $k>0$ by Lemma \ref{rank3} we conclude that $M \leq \operatorname{min}\{\frac k2, \,(1- \frac{\mu}{4})k - \frac{\mu}{4}\}$ and $C_{\red}$ is not 4-connected. Moreover if $M=\frac k2$ then $\deg K_C - \deg S=k$. But   $\deg S^{\ast}=\deg K_C - \deg S=k$ by its definition. This forces $h^0(C, \sI_{S^{\ast}} K_C) = p_a(C)-\frac{\deg S^{\ast}}{2} + \frac k2=p_a(C)$ which is impossible since $k > 0$ and $K_C$ ample. Thus $M \leq \operatorname{min}\{\frac k2 - \frac 12, \,(1- \frac{\mu}{4})k - \frac{\mu}{4}\}$ and
$$h^0(C, \sI_S K_C) \leq p_a(C)- \frac{\deg S}{2} +  (1- \frac{\mu}{4})k - \frac{\mu}{4}$$
if $\mu=2,\,3$ while
$$h^0(C, \sI_S K_C) \leq p_a(C)- \frac{\deg S}{2} +  \frac k2 - \frac 12$$
if $\mu=1$.\\

\noindent{\em Case 3: $S$ is a Cartier divisor, $\cliff(\sI_S K_C)=0$ and there exists a decomposition $C=A+B$ such that
$A$ and $B$ have no common components, $S= {K_C}_{|B}$ and $ S^{\ast}= {K_C}_{|A}$. }
\\
If Case 1 and 2 do not hold  we may conclude by Lemma \ref{duality} that $S$ and $S^{\ast}$ are disjoint Cartier divisor, that their Clifford index is zero, and that for every irreducible $\Gamma \subset C$ one of the restriction maps to $H^0(\Gamma, \sI_S K_C)$ and $H^0(\Gamma, \sI_{S^{\ast} }K_C)$ has rank one.

If for  an irreducible $\Gamma\subset C$  the restriction map
$H^0(C, \sI_S K_C) \to H^0(\Gamma, \sI_S K_C)$ has rank one, since $\sI_S K_C$ is base point free by Proposition \ref{bpf}, then  $S_{|\Gamma} = {K_C}_{|\Gamma}$ and moreover $S_{| n\Gamma} = {K_C}_{|n\Gamma}$
for $\Gamma $ of multiplicity  $n$ since $S$ is Cartier. Thus $S^{\ast}_{|n\Gamma}= \emptyset$.

The same holds for $S^{\ast}$.
Therefore there exists a decomposition $C=A+B$ such that
$A$ and $B$ have no common components, $S= {K_C}_{|B}$ and $ S^{\ast}= {K_C}_{|A}$.\\

Notice that in this case, since $S$ and $S^{\ast}$ are Cartier divisor with minimal Clifford index, by Proposition \ref{bpf} we know that $|\sI_S K_C|$ and  $|\sI_{S^{\ast}} K_C|$ are base point free and in particular the generic section does not pass through the singularities of $C_{\red}$. Thus the splitting index $k$ is 0.

In this situation we consider the following exact sequences
$$0 \to \omega_A \to \sI_S \omega_C \stackrel{r_B}{\longrightarrow} \Oh_B \to 0$$
$$0 \to \omega_B \to \sI_{S^{\ast}} \omega_C \stackrel{r_A}{\longrightarrow} \Oh_A \to 0.$$

Since $h^0(C, \sI_S K_C)= h^0(A, K_A)+ \rank(r_B)$ (and similarly for $S^{\ast}$) the conditions $\cliff(\sI_S K_C)=\cliff(\sI_{S^{\ast}} K_C)=0$ imply that
\begin{eqnarray*} h^0(A, \Oh_A)+ \rank(r_B) = \frac{A\cdot  B}{2}+1 \\
h^0(B, \Oh_B)+\rank(r_A)= \frac{A\cdot  B}{2}+1 \nonumber
\end{eqnarray*}
hence
\begin{equation}\label{ranghi}
h^0(A, \Oh_A) + h^0(B, \Oh_B)+  \rank(r_A)+\rank(r_B) = {A\cdot  B}+2
\end{equation}

Write $A= \sum_{i=1}^h A_i$ and $B=\sum_{j=1}^l B_j$ where the $A_i$ and $B_j$ are the topologically connected components of $A$, respectively $B$.

By Lemma \ref{rank1} and \ref{rank2} we know that $\rank(r_A) \leq h$ and $\rank(r_B) \leq l$.\\

If $h^0(A, \Oh_A) \leq \frac{A\cdot  B}{2}- h$ and $h^0(B, \Oh_B) \leq \frac{A\cdot  B}{2}- l$
equation (\ref{ranghi})  implies that
$${A\cdot  B}+2  \leq \frac{A\cdot  B}{2}- h + \frac{A\cdot  B}{2}- l + h+l$$
which is impossible. \\

Thus we have  either $h^0(A, \Oh_A) > \frac{A\cdot  B}{2}- h$ or $h^0(B, \Oh_B) > \frac{A\cdot  B}{2}- l$. Let us suppose that the first inequality is true.

By Lemma \ref{precodimspan} we have $h^0(A, \Oh_A) \leq \frac{A\cdot  B}{2}- \frac{m-2}{2}\cdot h$ assuming C $m$-connected (with $m \geq 3$). Therefore we know that $C$ is 4-disconnected and moreover there must be a topologically connected component of $A$, say $A_1$, such that $h^0(A_1, \Oh_{A_1}) > \frac{A_1\cdot  B}{2}- 1$.\\

If $C$ is 3-connected, Lemma \ref{precodimspan} says that $h^0(A_1, \Oh_{A_1}) \leq \frac{A_1\cdot  B}{2}- \frac12$ and we must conclude that
\mbox{$h^0(A_1, \Oh_{A_1})=1$} and $A_1\cdot  B=A_1\cdot (C-A_1)=3$. This forces $C-A_1$ to be 2-connected by \cite[Lemma A.4]{CFM} and allows us to consider the subcanonical cluster  $\tilde{S}^{\ast}:=S^{\ast}\cap (C-A_1)= K_{(C-A_1)|(A-A_1)}$. It is
 $$h^0(C, \sI_{S^{\ast}}K_C)= h^0(C-A_1, \sI_{\tilde{S}^{\ast}}K_{C-A_1})+1. $$
By an induction argument, we apply Clifford's theorem to the curve $C-A_1$ and the cluster $\tilde{S}^{\ast}$ which can be easily seen to be subcanonical for
the system $|K_{(C-A_1)}|$ since $\Oh_{C-A_1} \subset {\sI}_{\tilde{S}^{\ast}} K_{C-A_1}$. Moreover the splitting index of ${\tilde{S}^{\ast}}$ is zero since it is clear that $H^0(C-A_1, \sI_{\tilde{S}^{\ast}}K_{C-A_1})$ does not have any base point in $\Sing((C-A_1)_{\red})$. Thus we have
$$h^0(C, \sI_{S^{\ast}}K_C)= h^0(C-A_1, \sI_{\tilde{S}^{\ast}}K_{C-A_1})+1 \leq p_a(C-A_1) - \frac{\deg(\tilde{S}^{\ast})}{2}+1.$$
Since $p_a(C-A_1) = p_a(C) - p_a(A_1) -2$ and \mbox{$\deg(\tilde{S}^{\ast})=\deg({S}^{\ast})- (2p_a(A_1) +1)$,}  we conclude that
\begin{eqnarray*}h^0(C, \sI_{S^{\ast}}K_C)&\leq& (p_a(C) - p_a(A_1) -2) - \frac{\deg({S}^{\ast})}{2} +(p_a(A_1) +\frac12) +1\\&=&p_a(C) - \frac{\deg({S}^{\ast})}{2} - \frac12.
\end{eqnarray*}
Therefore $M= - \frac 12$, but we were asking $M \geq 0$, hence $C$ is 3-disconnected.\\

\noindent\textbf{Step 2: Clusters of minimal Clifford index of any  degree.
}
\\

We deal now with the case of a \textit{cluster $S$ of minimal Clifford index, without any assumption on its degree.}

If there exists a nontrivial cluster with minimal nonpositive Clifford index $S \in \Sigma_k$, there exists as well a nontrivial cluster $S_{\operatorname{max}}$ of maximal degree with the same Clifford index. In particular, a straightforward computation shows that the inequalities of the statement hold for $\sI_S K_C$ if and only if they hold for $\sI_{S_{\operatorname{max}}} K_C$, and similarly for the equalities.

We just showed that $\sI_{S_{\operatorname{max}}} K_C$, and thus $\sI_S K_C$ as well, satisfies the inequalities of the statement, hence proving the first part of the statement.\\

Moreover, if equality holds in  Equation (\ref{k2}) or in Equation (\ref{4-conn}) for $\sI_S K_C$ (and, equivalently, for $\sI_{S_{\operatorname{max}}} K_C$), then the pair $(S_{\operatorname{max}}, C)$ satisfies one of the condition listed in the statement. If $C$ is 3-disconnected there is nothing more to prove.

If, instead, $C$ is 3-connected, then case \textit{(ii)} must hold, hence $C$ is honestly hyperelliptic. We can repeat verbatim the classical idea of Clifford's theorem for a smooth hyperelliptic curve of Saint Donat (see \cite{sd} or \cite[Lemma IV.5.5]{Ha}) and conclude that $S$ is a multiple of a honest $g_2^1$.\end{proof}\\

As a corollary we obtain the following result in which the computation of the splitting index, usually tricky, is avoided by the count of the number of irreducible components.

\begin{TEO}\label{numero_irriducibili}
Let $C= \sum_{i=0}^s n_i \Gamma_i$ be a projective curve  either   reduced with planar singularities  or contained in a smooth  algebraic surface with $(s+1)$ irreducible components. Assume $C$ to be
 2-connected
 and let $S\subset C$ be a  subcanonical cluster. Then
 $$h^0(\sI_S K_C) \leq p_a(C) - \frac{1}{2} \deg (S) + \frac s2.$$
\end{TEO}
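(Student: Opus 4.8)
The plan is to derive the bound directly from Theorem~\ref{cliffordfinale} by showing that the splitting index $k$ of the subcanonical cluster $S$ never exceeds $s$.

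First I would check that the hypotheses of Theorem~\ref{cliffordfinale} are satisfied. Since $C$ is $2$-connected it is topologically connected, hence so is the reduced curve $C_{\red}$; a connected reduced curve is numerically $1$-connected, because in any decomposition $C_{\red}=D_1+D_2$ with $D_1,D_2\ne 0$ the two subcurves must meet and, the curve being reduced, $D_1\cdot D_2\ge 1$. Thus Theorem~\ref{cliffordfinale} applies with $\mu=1$ and Equation~(\ref{k2}) gives
\[
h^0(C,\sI_S K_C)\le p_a(C)-\tfrac12\deg(S)+\tfrac{k}{2},
\]
where $k$ is the splitting index of $S$.

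Next I would bound $k$ by $s$. By Proposition~\ref{generic_splitting}(1) there is a decomposition $C_{\red}=\sum_{i=0}^{k}C_i$, with each $C_i$ a nonzero effective subcurve, which is compatible with the decomposition of every section in $H^0(C,\sI_S K_C)$; the nonvanishing of the $C_i$ follows from the minimality built into the definition of the splitting index, since discarding an empty $C_j$ would exhibit a compatible decomposition of every section into fewer summands. As the $C_i$ induce a partition of the $s+1$ irreducible components $\Gamma_0,\dots,\Gamma_s$ of $C_{\red}$, we obtain $k+1\le s+1$, i.e.\ $k\le s$. Substituting into the displayed inequality yields $h^0(\sI_S K_C)\le p_a(C)-\tfrac12\deg(S)+\tfrac s2$, as desired.

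I do not expect a serious obstacle: the entire content is carried by Theorem~\ref{cliffordfinale}, and the only point requiring a little care is the (routine) observation that the subcurves occurring in an optimal splitting decomposition are nonzero, so that their number is bounded by the number of irreducible components of $C$.
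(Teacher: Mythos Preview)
Your proposal is correct and follows exactly the paper's approach: the paper's proof is the single sentence that the result follows immediately from Theorem~\ref{cliffordfinale} since the splitting index of any cluster is at most the number of irreducible components of $C$ minus one. You have simply supplied the routine details (that $C_{\red}$ is $1$-connected so the theorem applies, and that the $C_i$ in an optimal splitting decomposition are nonzero).
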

\begin{proof}
If follows immediately from Theorem \ref{cliffordfinale} since the splitting index of every cluster is at most the number of irreducible components of $C$ minus 1.
\end{proof}\\

If $S$ is a Cartier divisor we have the following theorem.

\begin{TEO}\label{Clifford_Cartier} Let $C$ be a projective  curve either reduced with planar singularities or contained in a smooth algebraic surface. Assume $C$ to be 2-connected and let $S\subset C$ be a subcanonical Cartier cluster. Then

$$h^0(C, \sI_S K_C) \leq p_a(C)-\frac 12 \deg(S).$$

Moreover if equality holds then the pair $(S,C)$ satisfies one of the following assumptions:
\begin{enumerate}
  \renewcommand\labelenumi{(\roman{enumi})}
\item $S= 0,\, {K_C}$;
\item  $C$ is honestly hyperelliptic and $S$ is a multiple of the honest $g_{2}^{1}$;
\item $C$ is 3-disconnected (i.e.  there is
a decomposition $C=A+B$ with  $A\cdot B=2$). \\
\end{enumerate}
\end{TEO}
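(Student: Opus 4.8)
This is exactly case~(a) of Theorem~A, and the plan is to read it off from Theorem~\ref{cliffordfinale}. That theorem already gives $h^{0}(C,\sI_{S}K_{C})\le p_{a}(C)-\frac{1}{2}\deg(S)+\frac{k}{2}$ with $k$ the splitting index of $S$, and describes the extremal pairs $(S,C)$ when $k=0$ (there case~(iii), ``$C$ $3$-disconnected'', is stated without hypothesis on $k$, and cases~(i),~(ii) with $k=0$ coincide with~(i),~(ii) here). So it suffices to prove the following claim: \emph{a subcanonical Cartier cluster on a $2$-connected curve has splitting index~$0$}. Granting this, Theorem~\ref{cliffordfinale} with $k=0$ gives at once the inequality $h^{0}(C,\sI_{S}K_{C})\le p_{a}(C)-\frac{1}{2}\deg(S)$ and the list of extremal pairs.

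To prove the claim, dispose first of $h^{0}(C,\sI_{S}K_{C})\le1$: then a single generically invertible section $s_{0}$ spans $H^{0}(C,\sI_{S}K_{C})$, and a nontrivial splitting of $s_{0}$ would force $s_{0}$ to vanish on a proper subcurve of $C_{\red}$, contrary to generic invertibility; so the splitting index is $0$. Assume henceforth $h^{0}(C,\sI_{S}K_{C})\ge2$. By Proposition~\ref{generic_splitting}(3) it is enough to produce one $H\in H^{0}(C,\sI_{S}K_{C})$ with $\div(H)\cap\Sing(C_{\red})=\emptyset$; and here the Cartier hypothesis is used in an essential way. Since $\sI_{S}K_{C}$ is invertible, at each point $Q$ outside the base locus the condition $Q\in\div(H)$ is a single linear condition on $H$, so, $\Sing(C_{\red})$ being finite, a general $H$ avoids it --- \emph{provided no point of $\Sing(C_{\red})$ is a base point of $|\sI_{S}K_{C}|$}. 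Thus the claim reduces to this last statement.

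I expect the reduced statement to be the main obstacle. My plan is to argue by contradiction from a point $P\in\Sing(C_{\red})\cap\Bs|\sI_{S}K_{C}|$, exploiting the residual construction. Since $S$ is Cartier, the residual $S^{\ast}$ of $S$ relative to a generically invertible $s_{0}$ is the effective Cartier divisor $\div(s_{0})$, again a subcanonical Cartier cluster with, by Remark~\ref{splitting_aggiunto}, the same splitting index; hence by Proposition~\ref{generic_splitting}(2) the point $P$ is a base point of $|\sI_{S^{\ast}}K_{C}|$ as well, and since $S$ is a divisor in $|\sI_{S^{\ast}}K_{C}|$ while $S^{\ast}=\div(s_{0})$ is one in $|\sI_{S}K_{C}|$, $P$ lies on both $S$ and $S^{\ast}$, contributing length at least $2$ to each because $P$ is singular, so the canonical divisor $S+S^{\ast}$ is ``fat'' at $P$. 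I would then feed this into the multiplication identity~(\ref{prodotto}) relating sections of $\sI_{S}K_{C}$ and of $\sI_{S^{\ast}}K_{C}$ through $s_{0}$, use the local form of $\sI_{S^{\ast}}\omega_{C}$ at a singular point described after Definition~\ref{residual_definition}, and apply the adjunction/vanishing estimates of Corollary~\ref{h1=0} and Lemma~\ref{rank1} to a minimal subcurve of $C$ through $P$, in order to construct a section of $\sI_{S}K_{C}$ not vanishing at $P$ --- the desired contradiction. Should this local computation prove unwieldy, an alternative is to reduce first to a Cartier cluster of minimal Clifford index and imitate Case~3 in the proof of Theorem~\ref{cliffordfinale}, where base point freeness at the minimal stage already forces the splitting index to vanish. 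Once the claim is established, the theorem follows by applying Theorem~\ref{cliffordfinale} with $k=0$.
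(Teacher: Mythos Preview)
Your reduction to the claim ``a subcanonical Cartier cluster has splitting index $0$'' is the crux, and this claim is not established --- nor is it true in general. Being Cartier says nothing about base-point freeness of $|\sI_{S}K_{C}|$: the linear system can very well have a base point at a node $P\in\Sing(C_{\red})$, for instance when $S$ is concentrated on one component and forces the residual degree there so low that every section passing through $S$ must vanish on the intersection locus with the neighbouring components. Lemma~\ref{bpf} gives base-point freeness only for clusters of \emph{minimal Clifford index} (within a fixed $\Sigma_{k}$), not for Cartier ones, and your contradiction sketch via the residual does not supply the missing ingredient: knowing that $P$ lies on both $S$ and $S^{\ast}$ and that $S+S^{\ast}$ is ``fat'' at $P$ does not produce a section of $\sI_{S}K_{C}$ nonvanishing at $P$. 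Your fallback --- pass to a minimal-Clifford-index cluster --- does not help either, since that cluster need not be Cartier, and Case~3 of Theorem~\ref{cliffordfinale} already \emph{assumes} Cartier rather than producing it.

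The paper does \emph{not} attempt to show $k=0$. Instead it allows $k>0$ and absorbs it as follows: by Proposition~\ref{generic_splitting}(2) every section of $\sI_{S}K_{C}$ vanishes on all the intersections $C_{i}\cap C_{j}$ of the splitting decomposition, so the \emph{base locus} $\overline{S}\supset S$ has colength at least $\tfrac{1}{2}\sum_{i}C_{i}\cdot(C_{\red}-C_{i})\ge k$ (here the Cartier hypothesis is used to identify the cokernel sheaf with the structure sheaf of $\bigcup_{i,j}C_{i}\cap C_{j}$). Since $h^{0}(\sI_{S}K_{C})=h^{0}(\sI_{\overline{S}}K_{C})$ and $\overline{S}$ still has splitting index $k$, Theorem~\ref{cliffordfinale} applied to $\overline{S}$ gives
\[
h^{0}(C,\sI_{S}K_{C})\le p_{a}(C)-\tfrac{1}{2}\deg\overline{S}+\tfrac{k}{2}\le p_{a}(C)-\tfrac{1}{2}\deg S-\tfrac{k}{2}+\tfrac{k}{2}=p_{a}(C)-\tfrac{1}{2}\deg S,
\]
and equality forces the extremal case of Theorem~\ref{cliffordfinale} with $k>0$, hence $C$ is $3$-disconnected. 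This is the idea you are missing.
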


\begin{proof}
If $S$ is not splitting the results follows from Theorem \ref{cliffordfinale}. Thus we can suppose that $S$ has splitting index $k > 0$. By Proposition \ref{generic_splitting} we know that there is a decomposition $C_{\red}= \sum_{i=0}^k C_i$ such that every $H \in H^0(C, \sI_S K_C)$ can be written as $H=\sum_{i=0}^k H_i$ with $H_i \in H^0(C, \sI_S K_C)$ and $\Supp H_i \subset C_i$.

In particular every section $H \in H^0(C,\sI_S K_C)$ vanishes on $C_i \cap C_j$ and we can decompose  $H^0(C, \sI_S K_C)_{|C_{\red}}$ as the direct sum of proper subspaces.

$$H^0(C, \sI_S K_C)_{|C_{\red}} = \bigoplus_{i=0}^k H^0(C,\sI_S K_C)_{|C_i}$$
such that the following diagram holds:

$$\xymatrix{\bigoplus_{i=0}^k H^0(C,\sI_S K_C)_{|C_i} \ar@{^{(}->}[d]  \ar[r]^{\iso}& H^0(C,\sI_S K_C)_{|C_{\red}} \ar@{^{(}->}[d] &\\
\bigoplus_{i=0}^k H^0(C_i, \sI_S \cap \sI_{C_i \cap (C_{\red}-C_i)} K_{C|C_i})\ar[r]& H^0(C_{\red},\sI_S K_{C|C_{\red}})\ar[r]& H^0(Z, \Oh_Z)} $$

Since the map $\bigoplus_{i=0}^k \sI_S \cap \sI_{C_i \cap (C_{\red}-C_i)} \omega_{C|C_i} \to \sI_S \omega_{C|C_{\red}}$ is generically an isomorphism its cokernel is a skyscraper sheaf $\Oh_Z$. Since $S$ is Cartier, it is not difficult to verify that $\Oh_Z$ is isomorphic, as sheaf on $C_{\red}$, to the structure sheaf of the scheme $\bigcup_{i,j} C_i \cap C_j$, thus it has length $\frac 12 \sum_{i=0}^k C_i\cdot(C_{\red} - C_i)$.\\

Let $\overline{S}$ be the base locus of $H^0(C,\sI_S K_C)$. We have the following exact sequence
$$ 0 \to \sI_{\overline{S}} \to \sI_S \to \sF \to 0$$
and $\sF \cong \Oh_{\xi}$ where $\xi$ is a cluster. It is clear from the above diagram that there is a natural surjective morphism  $ \Oh_{\xi} \onto \Oh_Z$. In particular the colength of  $S\subset \ \overline{S}$  is at least $\frac 12 \sum_{i=0}^k C_i\cdot(C_{\red} - C_i) \geq k$.

Since $H^0(C, \sI_S K_C)= H^0(C, \sI_{\overline{S}} K_C)$ the splitting index of $\hat{S}$ is still $k$ and we can apply Theorem \ref{cliffordfinale}:

$$\begin{matrix}h^0(C, \sI_S K_C)&=& h^0(C, \sI_{\overline{S}} K_C) \leq p_a(C)- \frac 12 \deg{\overline{S}} + \frac k2 \\
&=&p_a(C)- \frac 12 \deg{S} - \frac 12 \operatorname{colength} (\overline{S} \supset S)+ \frac k2 \\
& \leq & p_a(C)- \frac 12 \deg{S} - \frac k2 + \frac k2= p_a(C)- \frac 12 \deg{S}.\end{matrix}$$

Notice that if equality holds $h^0(C, \sI_{\overline{S}} K_C) = p_a(C)- \frac 12 \deg{\overline{S}} + \frac k2$, thus by Theorem \ref{cliffordfinale} we know that one of the 3 cases listed ($\overline{S}$ trivial, or $C$ honestly hyperelliptic, or $C$ 3-disconnected) must hold. Since we are assuming that the splitting index $k$ is strictly positive, we are forced to conclude that case $\textit{(iii)}$ of Theorem \ref{cliffordfinale} holds, i.e., $C$ is 3-disconnected.
\end{proof}

\hfill\break{\bf Proof of Theorem A.}
It is a straightforward corollary of Theorem \ref{cliffordfinale} if  $C_{\red}$ is 4-connected; of Theorem \ref{Clifford_Cartier}  if $S$ is Cartier; of Proposition \ref{generic_splitting} and Theorem \ref{cliffordfinale} if  there is a section $H \in H^0(C, \sI_S K_C)$ avoiding the singularities of $C_{\red}$ since  in this case $S$ is not splitting.
{\hfill {\bf Q.E.D.}}

\section{Clifford's theorem for reduced curves}

In this section we will prove Clifford's theorem for reduced 4-connected  curves with planar singularities. Theorem B works under the assumptions that the sheaves $\sI_S L$ and its dual $\sHom(\sI_S L, \omega_C)$ are NEF.

In Theorem C we deal with the case in which the second sheaf is not NEF. We split the curve in $C_0 + C_1$ where $C_1$ is the NEF part. It is still possible to find a Clifford type bound for $h^0(C, \sI_S L)$ with a correction term which corresponds to a Riemann-Roch estimate over $C_0$. In the extremal case in which $C=C_0$ we recover Riemann-Roch Theorem since $h^1(C, \sI_S L)=0.$

The inequality of Theorem C can be written also as

$$h^0(C, \sI_S L) \leq \frac{\deg(I_S L)_{|C_1}}{2} + \deg(I_S L)_{|C_0} - \frac{\deg(K_C)_{|C_0}}{2}.$$

The following trivial remark will be useful in the proof of Theorem B and C.
\begin{REM}\label{ovvio} Let $C$ be a reduced projective curve with planar singularities. Let $C= A+B$ be an effective decomposition of $C$ in non trivial subcurves. Consider two rank one torsion free sheaves $\sI_{S_A} L_A$ and $\sI_{S_B} L_B$ supported respectively on $A$ and $B$ with the property that $A \cap B \subset S_A$ and $A \cap B \subset S_B$. Then  the sheaf on $C$ defined as $\sI_{S_A} L_A \oplus \sI_{S_B} L_B$ is a rank one torsion free sheaf as well, since the sheaves living on the two curves can be glued together as they both vanish on the intersection.

\end{REM}

\hfill\break{\bf Proof of Theorem B.}
If $H^0(C,\sI_S L )=0$ or $H^1(C,\sI_S L )=0$ the result follows from Riemann-Roch Theorem and the positivity of $\deg{\sI_S L}$. We will assume from now on that both spaces are nontrivial.\\

We are going to show that if the sheaf $\sI_S L$ attains the minimal Clifford index among the sheaves satisfying the assumption  of Theorem B, then $\sI_S L$ is a subcanonical sheaf.

With this aim we prove firstly  that there exists an inclusion $\Oh_C \into \sI_S L$ and secondly that there exists an inclusion $ \sI_S L \into \omega_C$ .\\

If  $\Oh_C\not\into \sI_S L$,
let $B \subset C$ be the maximal subcurve  which annihilates every section in $H^0(C,\sI_S L)$ 
and let $A=C-B$. Then  there is a cluster $S_A$ on $A$ such that
$$0 \to \sI_{S_A} L_{|A}(-B) \to \sI_S L \to (\sI_S L)_{|B}\to 0$$
and moreover there is an isomorphism between vector spaces:
$$H^0(A, \sI_{S_A} L_{|A}(-B)) \cong H^0(C,\sI_S L ).$$

If $A \neq C$, consider the sheaf $\sF= \sI_{S_A} L_{|A}(-B) \oplus \Oh_B(A)(-A)$. By Remark \ref{ovvio} $\sF$ is a rank 1 torsion free sheaf and it is immediately seen that
$$0 \leq \deg sF_{|C_0} \leq K_{C|C_0} \text{  for every subcurve } C_0 \subset C.$$

Since $\cliff(\sI_S L)$ is minimum by our assumption then
\begin{equation}\label{disuguaglianza}\cliff (\sI_S L) \leq \cliff(\sF).\end{equation}

But, by our construction $h^0(C, \sF)= h^0(A, \sI_{S_A} L_{|A}(-B))+h^0(B, \Oh_B)$ and by definition of degree we have
\begin{eqnarray*}\deg(\sI_S L)& = &\chi (\sI_S L)- \chi(\Oh_C)= \chi (\sI_{S_A} L_{|A}(-B))+ \chi ((\sI_S L)_{|B}) -\chi (\Oh_C)\\
&=& \chi (\sI_{S_A} L_{|A}(-B))+ \deg ((\sI_S L)_{|B})+\chi(\Oh_B) -\chi (\Oh_C)\\
&\geq &\chi (\sI_{S_A} L_{|A}(-B))  +\chi(\Oh_B) -\chi (\Oh_C) = \deg (\sF).
\end{eqnarray*}
Thus
\begin{eqnarray*}\cliff(\sF) &=& \deg(\sF)- 2h^0(C, \sF)+2 \\&\leq &\deg(\sI_S L) - 2h^0(A, \sI_{S_A} L_{|A}(-B))-2h^0(B, \Oh_B)+2\\
&\leq &\deg(\sI_S L) - 2h^0(A, \sI_{S_A} L_{|A}(-B)) = \cliff(\sI_S L)-2.
\end{eqnarray*}
This contradicts Equation  (\ref{disuguaglianza}), hence $A=C$, i.e., there exist sections not vanishing on any subcurve, or, equivalently, $\Oh_C \into \sI_S L$.\\

Now  we show  that $\sI_S L \into \omega_C$. The dual sheaf $\sHom (\sI_S L, \omega_C)$ satisfies the assumption of Theorem B and by Serre duality  it has the same Clifford index of $\sI_S L$, hence thanks to the previous step $\Oh_C \into \sHom (\sI_S L, \omega_C)$. In particular $H^0(C, \Oh_C) \into H^0(C, \sHom (\sI_S L, \omega_C))= \Hom (\sI_S L, \omega_C)$. Hence there is a map from $\sI_S L$ to $\omega_C$ not vanishing on any component, and by automatic adjunction (Proposition \ref{lem:adj}) we may conclude that $\sI_S L \into \omega_C$.\\

We proved that any sheaf $\sI_S L$ with minimal Clifford index satisfies $\Oh_C \into \sI_S L \into \omega_C$, hence $\sI_S L \iso \sI_T \omega_C$ where $T$ is a subcanonical cluster. But Theorem A holds for $\sI_T \omega_C$, which concludes the proof.{\hfill {\bf Q.E.D.}}\\

\begin{REM}\label{h0-nonsubcanonico}
If  $ \sI_S L$ is not isomorphic to a sheaf of the form $ \sI_T K_C$ for a subcanonical $T$ then the proof of Theorem B shows that we have the stricter inequality  $h^0(C, \sI_S L)  \leq  \frac{\deg (\sI_S L)}{2}$.

\end{REM}
\hfill\break {\bf Proof of Theorem C.}
If $H^0(C,\sI_S L )=0$ or $H^1(C,\sI_S L )=0$ the result follows from Riemann-Roch Theorem and the positivity of $\deg{\sI_S L}$. We will assume from now on that both spaces are nontrivial.

Let $C_0$ be the maximal subcurve such that $$\deg[(\sI_S L)]_{|B} > \deg {K_C}_{|B}$$
for every subcurve $B \subset C_0$.

Consider a cluster $T$ on $C_0$ such that $(\sI_T \sI_S L)_{|C_0} \numeq K_{C|C_0}$. Such cluster must exist by our degree assumptions.
The sheaf $\sI_T \sI_S L$ satisfies the assumptions of Theorem B, thus $$h^0(C, \sI_T \sI_S L ) \leq  \frac{\deg \sI_S L }{2} - \frac{\deg T}{2} +1.$$

Moreover, if $\sI_T \sI_S L \iso \sI_Z \omega_C$ with $Z$ subcanonical cluster, we have $$h^1(C, \sI_T \sI_S L ) > h^1(C, \sI_S L ).$$ This follows from the analysis of the following commutative diagram:

$$\xymatrix{ H^1(C,\sI_S L )^* \iso\Hom_C (\sI_S L, \omega_C)  \ar@{^{(}->}[r] \ar[d]& \Hom_C (\sI_T \sI_S L, \omega_C)\ar[d]^{r_0} \iso H^1(C,\sI_T \sI_S L )^*\\
0=\Hom_{C_0}(\sI_S L,  \omega_C)  \ar[r] & \Hom_{C_0}(\sI_T\sI_S L,  \omega_C)  }$$

We have that $\Hom_{C_0}(\sI_S L,  \omega_C)= H^1(C_0, \sI_S L)^*=0$ by Corollary \ref{h1=0}. The map $r_0$ corresponds to the restriction map $H^0(C, \sI_{Z^{\ast}}K_C) \to H^0(C_0, \sI_{Z^{\ast}}K_C)$, which  is nonzero since $Z^{\ast}$ is subcanonical.

In this case we may conclude since
$$h^0(C, \sI_S L ) \leq h^0(C, \sI_T \sI_S L ) + \deg T -1 \leq \frac{\deg \sI_S L }{2}  + \frac{\deg (\sI_S L -  K_C)_{|C_0}}{2}.$$

If $\sI_T \sI_S L$ is not subcanonical, by Remark \ref{h0-nonsubcanonico} it is $h^0(C, \sI_T \sI_S L ) \leq \frac{\deg \sI_S L }{2} - \frac{\deg T}{2} $ and we have the same inequality.

{\hfill {\bf Q.E.D.}}

\section{Examples}

In this section we will illustrate some examples in which  the  estimates of Theorem \ref{cliffordfinale} and Theroem B and C are sharp. The first two examples concern Theorem \ref{cliffordfinale} and show that the Clifford index can be negative when $C_{\red}$ is 4-disconnected. Examples 5.3 and 5.4 regard Theorem A and in particular they show how to build 3-disconnected Gorenstein curves and nontrivial and non splitting subcanonical clusters with vanishing Clifford index.
The final example (firstly given by L. Caporaso in \cite[Ex. 4.3.4]{CAP}) shows a case in which \mbox{$H^0(C, \sI_S L) \neq 0$,} $H^1(C, \sI_S L) \neq 0$ and equality holds in Theorem C.

\begin{EX} Let $C= \sum_{i=0}^k \Gamma_i$ such that $\Gamma_i \cdot \Gamma_{i+1}=1$, $\Gamma_0 \cdot \Gamma_k=1$ and all the other intersection products are 0. Suppose that $p_a(\Gamma_i) \geq 2$.

$$\xymatrix{& \Gamma_0 \ar@{-}[r] \ar@{-}[dl] & \Gamma_1\ar@{-}[dr] &\\
\Gamma_5 \ar@{-}[dr] & & &\Gamma_2 \ar@{-}[dl]\\
& \Gamma_4 \ar@{-}[r]& \Gamma_3 &}$$
Take $S^{\ast}= \bigcup_{i,j} (\Gamma_i \cap \Gamma_j)$, which is a degree $(k+1)$ cluster. Since $\Gamma_i \cdot (C- \Gamma_i)=2$ every section in $H^0(C, K_C)$ vanishing on a singular point of $\Gamma_i$ must vanish on the other, hence if a section $H^0(C, K_C)$ vanishes on any of such points must vanish through all of them. In particular $h^0(C, \sI_{S^{\ast}} K_C)= p_a(C)-1 = p_a(C)- \frac{\deg S^{\ast}}{2} + \frac{k}{2}- \frac12$. It is clear that the splitting index of $S^{\ast}$ is precisely $k$.
\end{EX}

\begin{EX}\label{tetraedro} Let $C= \sum_{i=0}^5 \Gamma_i$ and suppose that $p_a(\Gamma_i) \geq 2$. Suppose moreover that the intersection products are defined by the following dual graph, where the existence of the simple line means that the intersection product between the two curves is 1.
 $$\xymatrix{\Gamma_0  \ar@{-}[rrr]  \ar@{-}[dr]  \ar@{-}[dd]&&& \Gamma_1  \ar@{-}[dl]  \ar@{-}[dd]\\
 & \Gamma_4  \ar@{-}[r]  \ar@{-}[dl]& \Gamma_3  \ar@{-}[dr] &\\
 \Gamma_5  \ar@{-}[rrr]&&& \Gamma_2
}$$
Take $S^{\ast}= \bigcup_{i,j} (\Gamma_i \cap \Gamma_j)$, which is a degree 9 cluster. It is easy to check that one can decompose $H^0(C, \sI_{S^{\ast}}K_C) \iso \oplus_{i=0}^5 H^0(\Gamma_i, K_{\Gamma_i})$ and that the splitting index of $S^{\ast}$ is $k=5$. Thus we have
$$h^0(C, \sI_{S^{\ast}}K_C)= \sum_{i=0}^s p_a(\Gamma_i) = p_a(C)- \frac{\deg S^{\ast}}{2} + \frac 12$$
and notice that $\frac12$ is precisely $\frac{k}{4} -\frac34$, which means that equality can hold when $C_{\red}$ is 3-connected but 4-disconnected.
\end{EX}

\begin{EX} Let $C=  \Gamma_0 + \sum_{i=1}^n\Gamma_i$ with $\Gamma_0 \cdot  \Gamma_i=2$ for every $i \geq 1$ and $\Gamma_i \cdot \Gamma_j=0$ for $i>j \geq 1$ (possibly $\Gamma_i = \Gamma_j$ for some $i,\,j$).

$$\xymatrix{&\Gamma_1 \ar@2{-}[d]&\\
\Gamma_4 \ar@2{-}[r]&  \Gamma_0 \ar@2{-}[d]& \ar@2{-}[l] \Gamma_2\\
& \Gamma_3&}$$

$C$ is 2-connected but 3-disconnected. Taking $S= {K_C}_{|C- \Gamma_0}$ we have
$$h^0(C, \sI_S K_C)= h^0(\Gamma_0, K_{\Gamma_0}) + h^0(C- \Gamma_0, \Oh_{C- \Gamma_0})=p_a(C)- \frac{\deg S}{2}$$
since $h^0(C- \Gamma_0, \Oh_{C- \Gamma_0})=n=\frac{\Gamma_0 \cdot (C-\Gamma_0)}{2}$.
\end{EX}

\begin{EX} Let $C= \Gamma_0 + \Gamma_1$ with $\Gamma_1$ irreducible and $\Gamma_0$ irreducible and hyperelliptic. Suppose that $\Oh_{\Gamma_0} (\Gamma_1)$ is a $g_2^1$ divisor on $\Gamma_0$.

 Let $S$ be another divisor in the linear series $g_2^1$ on $\Gamma_0$. Then $h^0(\Gamma_0, \sI_S K_C)= p_a(\Gamma_0)$ thus $h^0(C, \sI_S K_C) = p_a(\Gamma_1) + p_a(\Gamma_0) = p_a(C)-1= p_a(C)- \frac{\deg S}{2}$.
\end{EX}

We believe that if $C$ is 2-connected but 3-disconnected and $\cliff(\sI_S K_C)=0$ for a subcanonical non splitting cluster $S$,  then $S$ must be the sum of clusters shaped as the two above, i.e. a linear combination of a sum of $g_2^1$ plus a term of the form $K_{C|B}$ with $h^0(B, \Oh_B)= \frac{B \cdot  (C-B)}{2}$ or $h^0(C-B, \Oh_{C-B})= \frac{B \cdot  (C-B)}{2}$.\\

\begin{EX} Let $C= \sum_{i=1}^k \Gamma_i + \sum_{j=1}^k E_j$ where $p_a(\Gamma_i)=0$, $p_a(E_j)=1$. Moreover $\Gamma_i \cdot E_i = \Gamma_i \cdot  E_{i-1}= \Gamma_1\cdot E_k=1$ and every other intersection number is 0.

$$\xymatrix{& \Gamma_1 \ar@{-}[r] \ar@{-}[dl] & E_1\ar@{-}[dr] &\\
E_3 \ar@{-}[dr] & & &\Gamma_2 \ar@{-}[dl]\\
& \Gamma_3 \ar@{-}[r]& E_2 &}$$

Take a smooth point $P_i$ over each $\Gamma_i$ and consider the sheaf $L = \Oh_C (\sum_i P_i)$. Under the notation of Theorem $C$, we have that $C_0= \sum \Gamma_i$ and $C_1= \sum E_j$. A straightforward computation shows that $h^0(C, L)= k= \frac{\deg L}{2} + \frac{\deg (L-K_C)_{|C_0}}{2}$.
\end{EX}

  \vspace{.5cm}
Marco Franciosi\\
Dipartimento di Matematica, Universit\`a di Pisa\\
Largo B.Pontecorvo 5,  I-56127 Pisa (Italy)\\
{\tt franciosi@dm.unipi.it}
\vspace{.5cm}\\
Elisa Tenni\\
SISSA International School for Advanced Studies\\
via Bonomea 265, I-34136 Trieste (Italy)\\
{\tt etenni@sissa.it}

\end{document}